\newtheorem{theorem}{Theorem}
\newtheorem{lem}[theorem]{Lemma}
\newtheorem{prop}[theorem]{Proposition}
\newtheorem{rem}[theorem]{Remark}
\newtheorem{cor}[theorem]{Corollary}
\newcommand{\R}{\mathbb{R}}
\newcommand{\N}{\mathbb{N}}
\newcommand{\I}{\mathbf{1}}
\newcommand{\PP}{\mathbb{P}}
\newcommand{\E}{{\mathbb{E}}}
\newcommand{\EE}{{\sf{E}}}
\newcommand{\defi}{\equiv}
\newcommand{\be}{\beta}
\newcommand{\bm}{\mathbf{m}}
\newcommand{\by}{\mathbf{Y}}
\newcommand{\md}{  \mathfrak d}
\newcommand{\beq}{\begin{equation}} 
\newcommand{\eeq}{\end{equation}} 
\newcommand{\bea}{\begin{aligned}}
\newcommand{\eea}{\end{aligned}}
\newcommand{\bdm}{\begin{displaymath}}
\newcommand{\edm}{\end{displaymath}}
\newcommand{\barr}{\begin{array}}
\newcommand{\earr}{\end{array}}
\newcommand{\ben}{\begin{enumerate}}
\newcommand{\een}{\end{enumerate}}
\newcommand{\bde}{\begin{description}}
\newcommand{\ede}{\end{description}}
\DeclareMathOperator{\Var}{Var}
\DeclareMathOperator{\Cov}{Cov}
\DeclareMathOperator{\On}{On}
\DeclareMathOperator{\len}{dim}
\DeclareMathOperator{\Tr}{Tr}
\DeclareMathOperator{\diag}{diag}
\begin{document}

\title[AMP Algorithms AND Stein's method]{AMP Algorithms AND Stein's method: \\ Understanding TAP equations with a new method}

\author[S. Gufler]{Stephan Gufler}
\address{S. Gufler  \\ J.W. Goethe-Universit\"at Frankfurt, Germany.}
\email{stephan.gufler@gmx.net}

\author[A. Schertzer]{Adrien Schertzer}
\address{A. Schertzer \\  Institut f\"ur Angewandte Mathematik, Bonn University, Germany }
\email{aschertz@uni-bonn.de}

\author[M. A. Schmidt]{Marius A. Schmidt}
\address{M. A. Schmidt \\ J.W. Goethe-Universit\"at Frankfurt, Germany.}
\email{m.schmidt@mathematik.uni-frankfurt.de}

\thanks{This work was partly funded by the Deutsche Forschungsgemeinschaft (DFG, German Research Foundation) under Germany's Excellence Strategy - GZ 2047/1, Projekt-ID 390685813 and GZ 2151 - Project-ID 390873048, through Project-ID 211504053 - SFB 1060, and by DFG research grant contract number 2337/1-1 and KI 2337/1-2, project 432176920. We are grateful to Nicola Kistler for suggesting the problem and for helpful discussions. We are grateful to Jan Lukas Igelbrink and Jan H\k{a}z\l{}a for interesting discussions.}

\begin{abstract} We propose a new iterative construction of solutions of the classical  TAP equations for the Sherrington-Kirkpatrick model,  i.e.  with finite-size Onsager correction.  The algorithm can be started in an arbitrary point,  and converges up to the AT line.  The analysis relies on a novel treatment of mean field algorithms through Stein's method. As such,  the approach also yields weak convergence of the effective fields at all temperatures towards Gaussians,  and can be applied,  upon proper alterations,  to all models where TAP-like equations and a Stein-operator are available.  \end{abstract}

\maketitle

\section{Introduction and main results}
Let $N\in \N$ and consider independent standard Gaussians ${\bf G} \defi \{g_{ij}\}_{1 \leq i < j \leq N}$ issued on some probability space $(\Omega,  \mathcal F,  \PP)$. We set $g_{ij}= g_{ji}$ for $i>j$. For simplicity of notation, we set $g_{ii}=0$ as well as all the partial derivatives $\frac{\md}{\md g_{ii}}= 0$. We denote by $\E$ expectation with respect to these random variables.  To inverse temperature $\be\in \R$ and external field $h\in \R$,  the TAP equations \cite{TAP} for the SK model \cite{SK} are  self-consistency equations, which approximately describe the quenched magnetizations $\bm = (m_1, \dots, m_N) \in [-1,1]^N, $ i.e. the mean of the Ising spins under Gibbs measure, reading
\begin{equation}\label{e:TAP}
m_i = \tanh\left(h+\frac{\beta}{\sqrt{N}} \sum_{j=1}^N g_{ij} m_j - \beta^2 \left(1-q_N\right) m_i\right) \qquad i=1 \dots N, 
\end{equation}
where 
\beq \label{qn}
q_N \defi q_N(\bm) := \frac{1}{N} \sum_{i=1}^N m_i^2
\eeq
is the {\sf Edwards-Anderson} order parameter,  EA for short,  whereas the "reaction term"
\beq \label{Onsi}
\beta^2 \left(1-q_N\right) m_i =: \On_i
\eeq
is the {\sf finite-size Onsager correction}. These equations are known to be a good approximation for the magnetizations in some high temperature regime \cite{Ad, C, T1, T2} and believed to be such at least if $\be, h$ satisfy the AT condition 
\begin{equation}\label{e:AT}
	\beta^2\EE\frac{1}{\cosh^4(h+\beta\sqrt{q}Z)} \le 1,
\end{equation}
where $q\geq 0$ solves 
\beq \label{e:q}
q= \EE \tanh^2\left( h + \be \sqrt{q} Z\right) =: \phi(q)\,.
\eeq
We refer to $q$ as the {\sf limiting fixed point}, but it is known as the {\sf replica symmetric solution} also; here and henceforth,  we denote by $Z$ a standard Gaussian,  and by $\EE$ its expectation.  
From now on, $q$ denotes the solution of the fixed point equation~\eqref{e:q} which is unique for $h\ne0$ or $h=0, |\beta|\le 1$, and we choose $q$ to be the unique positive fixed point of \eqref{e:q} for $h=0$, $|\beta|>1$.\footnote{ For $h=0$ and $|\beta|>1$ there are two solutions of \eqref{e:q} namely $0$ and $q>0$. See Lemma \ref{step1_lem} for more details. }

For $i=1 \dots N$,  we refer to 
\beq
h_i \defi h_i(\bm) :=  \sum_{j= 1}^N g_{ij}m_j , 
\eeq
as {\sf effective fields}.  In this note, we propose a new algorithmic construction of the solutions of \eqref{e:TAP}. The key observation towards this goal concerns the "true" nature of the finite-size Onsager correction.  Indeed,  by {Gaussian partial integration} (PI for short)  and Leibniz rule, 
\beq \bea \label{gpi}
\E\left[ \frac{\be}{\sqrt{N}} \sum_{j=1}^{N} g_{ij} m_j \right] & \stackrel{(PI)}{=} \frac{\be}{\sqrt{N}} \sum_{j=1}^{N} \E \left[ \frac{d m_j}{d g_{ij}}  \right]  \stackrel{\eqref{e:TAP}}{=}  \E\left[ \be^2 \left( 1 - q_N \right) m_i \right] + R_N \,,
\eea \eeq	
 where
\beq \label{restino}
R_N :=
  \E \left[  \frac {\be^2}{\sqrt{N}} \sum_{j\neq i} \left( 1-m_j^2\right) \left\{ \sum_{l\neq j} \frac{g_{lj}}{\sqrt{N}} \frac{d m_l}{d g_{ij}} - \be  \frac{d}{d g_{ij}}  \left\{  \left( 1-q_N \right) m_j \right\}  \right\}-\frac{\be^2}{N} \left( 1 - m_i^2 \right) m_i\right] \,.
\eeq 

The first term on the r.h.s. of \eqref{gpi} is the (mean of the) Onsager correction \eqref{Onsi}: it neatly emerges through one Gaussian PI only.  The second term captures the nifty fact that the $\bf G$-dependence of the
 $m_i = m_i(g_{kl},  1 \leq k < l \leq N)$ actually goes on  {\it ad infinitum}.  Notwithstanding,  a closer look at \eqref{restino} suggests that stochastic cancellations may dampen such infinite rebouncing. In other words,  we expect that $R_N$ contributes  to lower orders only.  This insight turns out to be correct,  cfr. Theorem \ref{derivcontrolthrm} below,  and represents the backbone of the present work.  Due to the Gaussian nature of the random sum,  a first natural guess is of course a central limit theorem;  this will indeed turn out to be the case,  see Proposition \ref{weak-conv-clt} below. To elaborate precisely we define our iteration as follows. For $i\leq N$ and $i,N,k \in \N$ we consider
\begin{equation}\label{TAP-rec}
m^{(k+1)}_i := \tanh\left( h+Y_i^{(k)}\right)
\end{equation}
with the re-centered effective fields given by 
\beq
 Y^{(k)}_i  := \frac{\beta}{\sqrt{N}} \sum_{j=1}^N g_{ij} m^{(k)}_j - \On^{(k)}_i,  
\eeq
where $\On^{(k)}_i $ is given by either of the following choices 
\begin{enumerate}[(I)]
\item \label{On-cla} $\qquad\qquad\qquad \qquad \On^{(k)}_i := \beta^2 \left(1-  q^{(k)}_N \right)  m^{(k-1)}_i,$ \\
\item \label{On-new} $\qquad\qquad\qquad\qquad \On^{(k)}_i := \frac{\beta}{\sqrt{N}} \sum_{j=1}^N {\mathfrak d}_{ij} m_j^{(k)}\,.$ \\
\end{enumerate}
Here and henceforth we use the notations 
\[q^{(k)}_N:=\frac{1}{N} \sum_{i=1}^N {m^{(k)}_i}^2 \quad \mbox{ and} \quad {\mathfrak d}_{kl} F := \frac{d F}{d g_{kl}},\]
for suficiently smooth functions $F$.  Choice \eqref{On-cla} is the {\it classical},  finite-size Onsager correction to the TAP equations \eqref{e:TAP}. For this choice the iteration is known to approximate the magnatizations of the SK-Model well, at least in some temperature regime, see \cite{CT} for details. By the heuristics related to \eqref{gpi},  the second choice \eqref{On-new} rather highlights the Onsager reaction term as a re-centering of the effective fields.  All our results are valid in both cases,  yet the second (and apparently: new) correction -- as well as its interpretation -- are arguably more natural within our Stein framework.  
To properly define $\bm^{(k)}$ a starting values $\bm^{(1)}$ or in case $\eqref{On-cla}$ $\bm^{(0)},\bm^{(1)}$ are needed. Concening these values we assume throughout, that :
\begin{itemize}
\item[A1)] The starting value $\bm^{(1)}\in[-1,1]^N$ is either deterministic or independent of the $\bf G$-disorder.
\item[A2)] In case of choice \eqref{On-cla},  we additionally set $\bm^{(0)}:=0$.  
\end{itemize}
The extent to which A1) can be relaxed is discussed in Remark~\ref{r:g} below. 

Clearly the limit of this algorithm, if it converges, is a solution of \eqref{e:TAP}. This motivates our first result, the following {\it pseudo-convergence}\footnote{This is quite a delicate issue.  Indeed,  even the meaning of "solution" and "convergence" are  {\it a priori} by far not obvious: due to the mean field character of the random system,  one has to first send $N \to \infty$,  and only in a second step $k \to \infty$.  Naturally,  the proper procedure to allow for a discussion of the true TAP-solutions would send $k \to \infty$ with $N$ fixed. } :

\begin{prop}\label{c:convergence}
For $\beta,h \in \R$ and $\liminf_{N\rightarrow\infty} q^{(1)}_N>0$ if $h=0,|\beta|>1$,
 we have 
\begin{equation}\label{convalgo} 
	\lim_{k, l \to \infty} \limsup_{N \to \infty} \frac{1}{N} \sum_{i=1}^N \left[ \left( m_i^{(k)} - m_i^{(l)} \right)^2 \right] = 0
	\end{equation}
if and only if the AT condition~\eqref{e:AT} is satisfied. 
\end{prop}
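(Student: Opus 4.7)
The plan is to reduce (\ref{convalgo}) to a low-dimensional deterministic recursion via state evolution. By Proposition~\ref{weak-conv-clt}, for each fixed $k$ the empirical distribution of the re-centered effective fields $(Y_i^{(k)})_{i\le N}$ is asymptotically centered Gaussian with variance $\beta^2 q_N^{(k)}$; running this argument jointly for two iteration indices one obtains joint Gaussianity of $(Y_i^{(k-1)},Y_i^{(l-1)})$ with variances $\beta^2 q_N^{(k-1)},\beta^2 q_N^{(l-1)}$ and covariance $\beta^2 c_N^{(k-1,l-1)}$, where $c_N^{(k,l)}:=\frac{1}{N}\sum_{i=1}^N m_i^{(k)} m_i^{(l)}$. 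Combining this with (\ref{TAP-rec}), passing to the limit $N\to\infty$, and writing $q^{(k)}:=\lim_N q_N^{(k)}$, $c^{(k,l)}:=\lim_N c_N^{(k,l)}$, one arrives at the coupled deterministic recursion
\[
q^{(k+1)}=\phi(q^{(k)}),\qquad c^{(k+1,l+1)}=\psi(q^{(k)},q^{(l)},c^{(k,l)}),
\]
with $\phi$ as in (\ref{e:q}) and $\psi(a,b,c):=\EE\bigl[\tanh(h+\beta\sqrt{a}\,U)\tanh(h+\beta\sqrt{b}\,V)\bigr]$ for a centered standard Gaussian pair $(U,V)$ of correlation $c/\sqrt{ab}$. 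The same CLT yields
\[
\frac{1}{N}\sum_{i=1}^N \EE\bigl[(m_i^{(k)}-m_i^{(l)})^2\bigr] \longrightarrow q^{(k)}+q^{(l)}-2\,c^{(k,l)},
\]
so (\ref{convalgo}) is equivalent to $q^{(k)}$ and $c^{(k,l)}$ converging to a common limit $q$ as $k,l\to\infty$.

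For the direction \emph{AT} $\Rightarrow$ \emph{convergence}, I would first show $q^{(k)}\to q$. The map $\phi$ is smooth and monotone on $[0,1]$, and $\phi'(q)=\beta^{2}\,\EE\tfrac{1}{\cosh^{4}(h+\beta\sqrt{q}Z)}\le 1$ by AT. A standard monotonicity argument (as indicated by Lemma~\ref{step1_lem}) then yields $q^{(k)}\to q$ from any admissible $q^{(1)}$, the exclusion $q^{(1)}\neq 0$ for $h=0,|\beta|>1$ ruling out convergence to the unstable fixed point. For the off-diagonal $c^{(k,l)}$, a direct Gaussian integration-by-parts computation gives
\[
\partial_c\psi(a,b,c)=\beta^{2}\,\EE\Bigl[\tfrac{1}{\cosh^{2}(h+\beta\sqrt{a}\,U)}\cdot\tfrac{1}{\cosh^{2}(h+\beta\sqrt{b}\,V)}\Bigr],
\]
which at $(a,b,c)=(q,q,q)$ reduces, by Cauchy--Schwarz, to at most $\phi'(q)\le 1$. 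Combined with $q^{(k)}\to q$ and a perturbation estimate, this shows that the off-diagonal recursion is an asymptotic contraction in $c$ towards $q$, hence $c^{(k,l)}\to q$.

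For the converse I would argue by contraposition. If AT fails, then $\phi'(q)>1$; linearising the scalar recursion at $q$ shows that from a typical admissible $q^{(1)}$ the iterates do not approach $q$, and the off-diagonal recursion is locally expansive at $c=q$, so $c^{(k,l)}$ cannot stabilize at $q$. Via the displayed identity, (\ref{convalgo}) must then fail.

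The main obstacles I anticipate are: (i)~upgrading Proposition~\ref{weak-conv-clt} to a statement that is uniform \emph{jointly} in $k$ and $l$, so that the state-evolution identities for $q_N^{(k)}$ and $c_N^{(k,l)}$ can be iterated with controlled error propagation; this will rely on Theorem~\ref{derivcontrolthrm} applied across iteration indices, not only per step; (ii)~handling the boundary case $\phi'(q)=1$ of AT, where first-order contraction degenerates and one must exploit the concavity/convexity of $\phi$ on the relevant interval to still conclude convergence; and (iii)~in the converse, ensuring that the class of admissible starting values is large enough that one cannot accidentally recover (\ref{convalgo}) by landing on an exceptional trajectory despite the loss of contraction.
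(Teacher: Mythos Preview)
Your decomposition $\frac{1}{N}\sum_i(m_i^{(k)}-m_i^{(l)})^2 = q^{(k)}_N+q^{(l)}_N-2c^{(k,l)}_N$ and the state-evolution reduction to the recursions $q^{(k+1)}=\phi(q^{(k)})$, $c^{(k+1,l+1)}=\psi(c^{(k,l)},q^{(k)},q^{(l)})$ match the paper exactly; there this is packaged as Theorem~\ref{thm-contract-cond}, from which Proposition~\ref{c:convergence} follows in one line via~\eqref{weird2}.

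There is, however, a genuine error in your treatment of the diagonal recursion. The identity you write, $\phi'(q)=\beta^{2}\,\EE\cosh^{-4}(h+\beta\sqrt{q}Z)$, is \emph{not} $\phi'(q)$: it is $\varphi'(q):=\partial_t\psi(t,q,q)\big|_{t=q}$. Since $\phi(x)=\psi(x,x,x)$, the chain rule gives additional variance-derivative terms, and in fact $\phi'(q)=\beta^2\,\EE\bigl[(1-\tanh^2)(1-3\tanh^2)\bigr]$ at argument $h+\beta\sqrt{q}Z$, which is unrelated to the AT quantity and can be negative. More importantly, Theorem~\ref{unif_conv_thrm} in the paper shows that $\phi^{(k)}(x)\to q$ for \emph{all} $\beta,h$ (uniformly in $x$, away from $x=0$ when $h=0,|\beta|>1$): the diagonal $q^{(k)}\to q$ holds whether or not AT is satisfied. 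Your obstacle~(ii) is therefore a non-issue for~$\phi$.

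This breaks your converse. When AT fails, $q^{(k)}$ and $q^{(l)}$ still converge to $q$; what changes is the off-diagonal. The condition $\varphi'(q)>1$ makes $q$ a \emph{repulsive} fixed point of $t\mapsto\psi(t,q,q)$, and by convexity of $\varphi$ on $[0,q]$ there is a second, attractive fixed point $\tilde q\in[0,q)$. The off-diagonal $c^{(k,l)}$ converges to $\tilde q$, so the limit in~\eqref{convalgo} equals $2(q-\tilde q)>0$. Your local-expansiveness remark points in the right direction, but the claim that the scalar $q^{(k)}$-iteration fails to approach $q$ is false and cannot be part of the argument. Your obstacle~(iii) is the real work in this direction: one must show the off-diagonal trajectory never lands on the unstable fixed point $q$. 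The paper does this via a compactness argument proving $q^{(k,k')}/\sqrt{q^{(k)}q^{(k')}}<1-\delta_K$ uniformly over all admissible starting data, which relies crucially on the strict Jensen inequality $|\chi(q^{(k)})\bar m|<\sqrt{\phi(q^{(k)})q^{(1)}}$ at the first step.
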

To investigate further we shorten 
\begin{equation}\label{def-qkk}
	q^{(k,k')}_N :=  N^{-1} \sum_{i}^{N} m^{(k)}_i m^{(k')}_i  \quad \text{and} \quad q^{(k)}_N := q^{(k,k)}_N\,.
\end{equation}
 The convergence in \eqref{convalgo} goes as follow: We can rewrite the l.h.s. of \eqref{convalgo} as 
\beq \label{weird2}
\frac{1}{N} \sum_{i=1}^N \left[ \left( m_i^{(k)} - m_{i}^{(l)} \right)^2\right]=  q^{(k)}_N+ q^{(l)}_N-2  q^{(k,l)}_N \, .
\eeq
We will see that the two first terms of the r.h.s. tend to $q$ as $N \to \infty$. However, $q^{(k,l)}_N$ tends to $q$ as $N \to \infty$ and $k \to \infty$ if and only if the AT-condition  \eqref{e:AT} is satisfied and converges to a limit strictly smaller than $q$ if the AT-condition \eqref{e:AT} is not satisfied. \\
To describe the situation precisely let 
\begin{equation}\label{e:psi}
	\psi(t,t',t''):=\EE\left[\tanh(h+\beta Z')\tanh(h+\beta Z'') \right]
\end{equation}
for $t',t''\in [0,1]$, $|t|\leq \sqrt{t' t''}$, where the expectation is over centered Gaussians $Z',Z''$ such that 
\beq 
\Var(Z')=t',  \quad \Var(Z'')=t'', \quad \Cov(Z',Z'')=t.
\eeq
Additionally we say $X_{N,k}\rightarrow X$ in probability as $N\to\infty$  followed by $ k\to\infty$, if for all $\varepsilon>0$
$$ \lim_{k\rightarrow\infty} \limsup_{N\rightarrow \infty}\PP\left( |X_{N,k}-X|>\varepsilon \right)   = 0.$$
In the same way, we say $X_{N,k}\rightarrow X$ weakly as $N\to\infty$  followed by $ k\to\infty$, if 
$$ \lim_{k\rightarrow\infty} \limsup_{N\rightarrow \infty} \E \left( f\left(X_{N,k}\right) \right)= \E \left( f\left(X\right) \right)$$
for all continuous and bounded functions  $f$. Similarly, for $k_1,k_2 \dots k_L \rightarrow \infty$ the $\lim_{k\rightarrow\infty}$  is repaced by $\lim_{k_1,k_2 \dots k_L\rightarrow\infty}$.
\begin{rem}
The use of $\limsup$ is required to account for arbitrary starting values. It can be replaced by the normal limit if the initial value is well behaved, i.e if $q_N^{(1)}$ and $\frac{1}{N}\sum_{i=1}^{N}m_i^{(k)}$ converge as $N\rightarrow \infty$.
\end{rem}

\begin{theorem}\label{thm-contract-cond}
	
		 For all $\beta,h \in \R$ and assuming $\liminf_{N\to\infty} q^{(N)}_1>0$ a.s. additionally if $h=0, |\beta|>1$ , we have		
		
		\beq\label{contract-cond-uni}
q^{(k,k')}_N \to \left\{
    \begin{array}{ll}
        q  & \mbox{if }  k=k' \\ 
        \tilde{q} & \mbox{if } k\neq k'  
    \end{array}
\right.
\text{ in probability as } N\to\infty \text{ followed by } k,k'\to\infty
\eeq

where $\tilde{q}$ is the smallest non-negative fixed point of $ f(t) = \psi(t,q,q)$. Additionally $q=\tilde{q}$ if the AT condition~\eqref{e:AT} is satisfied and $q>\tilde{q}$ otherwise.  

\end{theorem}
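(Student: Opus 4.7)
\medskip

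\noindent\textbf{Strategy.} The plan is to reduce convergence of the overlap $q_N^{(k,k')}$ to the analysis of a one-dimensional discrete dynamical system, by leveraging the Stein-based machinery (Proposition~\ref{weak-conv-clt} and Theorem~\ref{derivcontrolthrm}) jointly on two iteration indices. The crucial intermediate fact I would establish is that, as $N \to \infty$, the pair of effective fields $(Y_i^{(k)}, Y_i^{(k')})$ is asymptotically centered jointly Gaussian with covariance matrix
\[
\beta^{2}\begin{pmatrix} q_N^{(k)} & q_N^{(k,k')} \\ q_N^{(k,k')} & q_N^{(k')} \end{pmatrix}.
\]
This bivariate CLT is the engine that turns the TAP recursion $m_i^{(k+1)}=\tanh(h+Y_i^{(k)})$ into a closed recursion on the overlaps.

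\medskip

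\noindent\textbf{Diagonal case ($k=k'$).} Applied to a single index, the Gaussian approximation combined with a law of large numbers across $i$ gives $q_N^{(k+1)} \approx \phi(q_N^{(k)})$. Iterating, $q_N^{(k)}$ tracks the scalar map $\phi$, whose attracting fixed point in the regime considered is $q$ (Lemma~\ref{step1_lem}); the hypothesis $\liminf_N q_N^{(1)}>0$ excludes exactly the unstable fixed point $0$ when $h=0,\,|\beta|>1$. This yields the first alternative in~\eqref{contract-cond-uni}.

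\medskip

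\noindent\textbf{Off-diagonal case.} The analogous joint computation gives, as $N\to\infty$,
\begin{equation*}
q_N^{(k+1,k'+1)} \;\approx\; \psi\bigl(q_N^{(k,k')},\, q_N^{(k)},\, q_N^{(k')}\bigr).
\end{equation*}
Plugging in the diagonal step $q_N^{(k)},q_N^{(k')}\to q$, the off-diagonal overlap is driven for large $k,k'$ by the scalar iteration $t_{n+1}=f(t_n)$ with $f(t):=\psi(t,q,q)$. Price's formula (Gaussian integration by parts through the covariance parameter) gives
\[
f'(t) \;=\; \beta^{2}\,\EE\!\left[\tfrac{1}{\cosh^{2}(h+\beta Z')}\,\tfrac{1}{\cosh^{2}(h+\beta Z'')}\right]\;\ge\;0,
\]
so $f$ is strictly increasing on $[0,q]$; moreover $f'(q)=\beta^{2}\,\EE\!\left[1/\cosh^{4}(h+\beta\sqrt{q}\,Z)\right]$ is precisely the left-hand side of~\eqref{e:AT}. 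Since $\psi(q,q,q)=\phi(q)=q$, the point $q$ is always a fixed point of $f$. Under AT the inequality $f'(q)\le 1$ forces $\tilde q = q$, while $f'(q)>1$ together with $f(0)\ge 0$ and continuity produces a strictly smaller non-negative fixed point $\tilde q<q$. Monotonicity of $f$ and the Cauchy--Schwarz bound $|q_N^{(k,k')}|\le\sqrt{q_N^{(k)}q_N^{(k')}}$ confine the iterates to the basin of $\tilde q$, giving convergence to $\tilde q$.

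\medskip

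\noindent\textbf{Main obstacle.} The crux is obtaining the joint Gaussian CLT for $(Y_i^{(k)},Y_i^{(k')})$ with error bounds uniform enough in $k,k'$ to be propagated through the recursion and, crucially, to survive the iterated $\limsup_{N}$ followed by $\lim_{k,k'}$ present in the target statement. This is exactly the role of Theorem~\ref{derivcontrolthrm}: its derivative control must handle not only single-index moments but also cross-terms between distinct iteration levels, and it must remain effective after composition with the Lipschitz test functions $\tanh(h+\cdot)\,\tanh(h+\cdot)$ entering the definition of $q_N^{(k+1,k'+1)}$. A secondary delicate point is that assumption A1) permits essentially arbitrary starting overlaps; one must check via a short comparison argument based on the monotonicity of $f$ that, regardless of $q_N^{(1,1)}$, the iterates enter the basin of attraction of $\tilde q$ after finitely many steps, uniformly in $N$.
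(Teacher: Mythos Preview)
Your overall strategy matches the paper's: reduce to the propagation $q_N^{(k+1,k'+1)}\approx\psi(q_N^{(k,k')},q_N^{(k)},q_N^{(k')})$ via a bivariate Stein/Gaussian approximation (this is the content of Theorems~\ref{Stein-TAP} and~\ref{q-prop}), then analyse the scalar map $f(t)=\psi(t,q,q)$. The diagonal case and the off-diagonal case under AT are essentially as in the paper. Two remarks on side issues: the Stein bounds are \emph{not} uniform in $k,k'$ (the constant $C$ in Theorem~\ref{q-prop} depends on $k,k'$), but this is irrelevant since $N\to\infty$ is taken first; and you need a separate formula for $q_N^{(k,1)}$, namely $q_N^{(k+1,1)}\approx\chi(q_N^{(k)})\cdot N^{-1}\sum_i m_i^{(1)}$ (see~\eqref{e:qprop-chi}), because $m^{(1)}$ is not of the form $\tanh(h+Y_i^{(0)})$.

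There is, however, a genuine gap in the non-AT case, and it is precisely the point you label ``secondary''. When $f'(q)>1$, both $\tilde q$ and $q$ are fixed points of $f$; Cauchy--Schwarz only gives $q_N^{(k,k')}\le\sqrt{q_N^{(k)}q_N^{(k')}}\to q$, which does \emph{not} place the iterates in the basin of $\tilde q$---it places them on its closure, including the unstable fixed point $q$ itself. Monotonicity of $f$ cannot by itself rule out that the correlation $\kappa^{(k,k')}:=q^{(k,k')}/\sqrt{q^{(k)}q^{(k')}}$ tends to $1$ along some sequence of starting values, in which case the $f$-iteration would not contract to $\tilde q/q$ in finitely many steps uniformly. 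The paper resolves this with a dedicated lemma showing $\kappa^{(k,k')}<1-\delta_{K}$ uniformly in the starting data for $k>k'\in\{2,\dots,K\}$. The proof is not a ``short comparison argument'': it relies on the $k'=1$ initialization via $\chi$ and the \emph{strict} Jensen inequality $|\chi(t)|<\sqrt{\phi(t)}$ for $t>0$, which forces $|q^{(k,1)}|<\sqrt{q^{(k)}q^{(1)}}$ strictly, and then propagates this strictness through the Cauchy--Schwarz equality case for $\psi$. Only after this uniform separation from the repulsive fixed point is established does the monotone/convex analysis of $f$ (together with the repulsion estimate $f_{k,k'}(t)\le t-(1-t)\varepsilon$ near $1$) yield convergence to $\tilde q$. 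A related omission is the treatment of negative overlaps: the paper needs Lemma~\ref{-t} (based on a symmetrization argument in $h$) to push the iteration into $[0,q]$ before the convexity argument applies.
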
 	

\begin{rem}
For $h=0$, the system can not entirely forget the initial condition in the double limit in low temperature. Indeed,  if $h=0$, $|\beta|> 1$ and $\lim_{N\to\infty} q^{(1)}_N =0$, then $q>0$, but $ q^{(k,k')}_N \to 0.  \text{ in probability as } N\to\infty$. This case is due to the fact that $t \to \psi(t,t,t)=\phi(t)$ has two fixed points for $h=0$, $|\beta|> 1$ (see equation~\eqref{e:q} and the comments below): A repulsive fixed point $0$ and an attractive fixed point $q$. Therefore, $0$ is only relevant, when the algorithm starts "close" to it.
\end{rem}

\begin{proof}[Proof of Proposition \ref{c:convergence}]
The claim follows immeadiately by applying Theorem \ref{thm-contract-cond} to \eqref{weird2}.
\end{proof}
We now state the afforementioned CLT for the centered effective fields, which concludes a first big picture overview over the behaviour of the iteration.
\begin{prop} \label{weak-conv-clt}
 For all $\beta,h \in \R$, $L\in\N$, $(k_l,i_l)_{l\leq L}\in (\N \times \{1,\cdots,N\})^L$, and if \\ \noindent$h=0, |\beta|>1$ assuming $\liminf_{N\to\infty} q^{(N)}_1>0$ a.s. additionally, we have

\[(Y_{i_1}^{(k_1)},\ldots, Y_{i_L}^{(k_L)}) \rightarrow {\bf \mathcal Z},\] 
in distribution as $N\to\infty$ followed by $k_1,\ldots,k_L\to\infty$, such that $(i_l,k_l)\neq (i_{l'},k_{l'})$ for all $l\neq l'$ and where ${\bf \mathcal Z}$ is a centered Gaussian field with covariance  
\[\Sigma_\infty(l,l'):=\EE[{\bf \mathcal Z}_l {\bf \mathcal Z}_{l'}] = \begin{cases} \beta^2 q & \mbox{ if } l=l' \\ \beta^2 \tilde{q} & \mbox{ if } l\neq l', i_l=i_{l'} \\ 0 & \mbox{ otherwise } \end{cases}.\]
\end{prop}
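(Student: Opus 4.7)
The plan is to invoke multivariate Stein's method and reduce the joint convergence to an approximate Stein identity, which will be verified via Gaussian partial integration combined with Theorem~\ref{thm-contract-cond} and the announced derivative-control Theorem~\ref{derivcontrolthrm}. Recall that a centered Gaussian vector $\mathcal Z\in\R^L$ with covariance $\Sigma$ is characterised by
$$\E\bigl[\mathcal Z_l f(\mathcal Z)\bigr]=\sum_{l'}\Sigma(l,l')\,\E\bigl[\partial_{l'}f(\mathcal Z)\bigr]$$
for a sufficiently rich class of smooth test functions $f$. By the standard Stein-equation argument, it therefore suffices to show that, for every $l$ and every smooth bounded $f$ with bounded derivatives,
$$\Delta_N(f,l):=\E\bigl[Y^{(k_l)}_{i_l}\,f(\mathbf Y^{(N)})\bigr]-\sum_{l'=1}^{L}\Sigma_\infty(l,l')\,\E\bigl[\partial_{l'}f(\mathbf Y^{(N)})\bigr]\longrightarrow 0$$
in the iterated limit $N\to\infty$ followed by $k_1,\dots,k_L\to\infty$, where $\mathbf Y^{(N)}:=(Y^{(k_1)}_{i_1},\ldots,Y^{(k_L)}_{i_L})$.

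First, I would apply Gaussian partial integration to the disorder part of $Y^{(k_l)}_{i_l}=\tfrac{\beta}{\sqrt N}\sum_j g_{i_l j}m_j^{(k_l)}-\On^{(k_l)}_{i_l}$, which produces two groups of terms. The first is a Stein-type contribution $\tfrac{\beta}{\sqrt N}\sum_j\E[m_j^{(k_l)}\,{\mathfrak d}_{i_l j}f(\mathbf Y^{(N)})]$, to be analysed via the chain rule in the next step. The second is a derivative contribution $\tfrac{\beta}{\sqrt N}\sum_j\E[({\mathfrak d}_{i_l j}m_j^{(k_l)})\,f(\mathbf Y^{(N)})]$ that, in the new Onsager convention~\eqref{On-new}, coincides by definition with $\E[\On^{(k_l)}_{i_l}f(\mathbf Y^{(N)})]$ and therefore cancels exactly; in the classical convention~\eqref{On-cla}, the same cancellation holds up to a Leibniz remainder of exactly the form $R_N$ in~\eqref{restino}, which will be handled in the error estimate below.

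Next, I would expand the Stein-type contribution via ${\mathfrak d}_{i_l j}f(\mathbf Y^{(N)})=\sum_{l'}(\partial_{l'}f)\,{\mathfrak d}_{i_l j}Y^{(k_{l'})}_{i_{l'}}$. Explicit differentiation of the definition of $Y^{(k_{l'})}_{i_{l'}}$ yields a direct Kronecker contribution $\tfrac{\beta}{\sqrt N}\delta_{i_l,i_{l'}}m_j^{(k_{l'})}$ plus cascading $g$-derivatives of the $m^{(k)}$'s and of $\On^{(k_{l'})}_{i_{l'}}$. Reinserting the Kronecker part produces a coefficient $\beta^2\delta_{i_l,i_{l'}}\,q_N^{(k_l,k_{l'})}$ in front of $\E[\partial_{l'}f(\mathbf Y^{(N)})]$; Theorem~\ref{thm-contract-cond}, combined with the hypothesis $(i_l,k_l)\ne(i_{l'},k_{l'})$ for $l\ne l'$, identifies the iterated limit of this coefficient with $\Sigma_\infty(l,l')$ exactly: $\beta^2 q$ on the diagonal, $\beta^2\tilde q$ if $i_l=i_{l'}$ but $k_l\ne k_{l'}$, and $0$ if $i_l\ne i_{l'}$.

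The hard part will be controlling the cascading residuals, i.e.\ all the higher-order terms produced by ${\mathfrak d}_{i_l j}Y^{(k_{l'})}_{i_{l'}}$ (which propagate backwards through the iteration~\eqref{TAP-rec}) and, in case~\eqref{On-cla}, the associated $R_N$-type remainder. These are precisely the \emph{infinite rebouncing} contributions alluded to below~\eqref{restino}. I expect to dispose of them using the announced Theorem~\ref{derivcontrolthrm}, which should furnish uniform-in-$(N,k)$ $L^2$-bounds of order $1/N$ for quadratic quantities such as $\tfrac{1}{N}\sum_j({\mathfrak d}_{i_l j}m_{j'}^{(k)})^2$ and their higher-order analogues. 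Combining these bounds with the $1/\sqrt N$ prefactor of each disorder sum, the uniform boundedness of $\tanh$ and its derivatives, and Cauchy--Schwarz against the bounded $\partial_{l'}f$, each cascading term will contribute $o(1)$ as $N\to\infty$, uniformly in $k$ (the $1$-Lipschitz property of $\tanh$ preventing any blow-up in the iteration count). Summing over the finitely many $l'\in\{1,\ldots,L\}$ closes the approximate Stein identity for $\Delta_N(f,l)$ and yields the asserted joint Gaussian limit with covariance $\Sigma_\infty$.
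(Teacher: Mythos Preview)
Your approach is essentially the Stein computation that the paper carries out in the proof of Theorem~\ref{Stein-TAP}, but aimed directly at the deterministic target covariance $\Sigma_\infty$ rather than at the intermediate random covariance $\Sigma$. The paper instead factors the argument: Theorem~\ref{Stein-TAP} gives $d_2(\mathbf Y,\mathbf Z)\le C_k/\sqrt N$ for the Gaussian $\mathbf Z$ with \emph{random} covariance $\Sigma$, and then a simple coupling $\mathbf Z=\Sigma^{1/2}Z$, $\mathcal Z=\Sigma_\infty^{1/2}Z$ together with Theorem~\ref{thm-contract-cond} handles $d_2(\mathbf Z,\mathcal Z)\to 0$. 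Your one-shot route works and avoids the (mildly technical) term $\mathcal F^j_{i_l a}$ coming from differentiating the random $\Sigma^{1/2}$ inside the Stein solution; conversely, the paper's two-step route gives the sharper quantitative statement $d_2(\mathbf Y,\mathbf Z)\le C_k/\sqrt N$ at fixed $k$, which is reused elsewhere (Lemmas~\ref{prop-E} and~\ref{q-conc}).

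Two points to tighten. First, the coefficient you obtain from the Kronecker part is $\E\bigl[\beta^2\delta_{i_l,i_{l'}}q_N^{(k_l,k_{l'})}\,\partial_{l'}f(\mathbf Y)\bigr]$, with $q_N^{(k_l,k_{l'})}$ \emph{inside} the expectation; you then need the (easy) observation that a bounded random variable times a sequence converging in probability to a constant has the right expectation limit. Second, your claim that Theorem~\ref{derivcontrolthrm} yields bounds ``uniformly in $k$'' is incorrect: the constants $C_{|\alpha|,t,k,\beta}$ there depend on $k$, and no $1$-Lipschitz argument for $\tanh$ removes this (the cascading terms are the $\Delta^{(k)}_{x;y,z}$ and $\mathcal E^{(k)}_x$ of Theorem~\ref{deltathrm}, whose control is inductive in $k$). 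This does not break your proof, because the iterated limit takes $N\to\infty$ at fixed $k$ first, and a $k$-dependent $o_N(1)$ suffices; but you should drop the uniformity claim and invoke Theorem~\ref{deltathrm} rather than Theorem~\ref{derivcontrolthrm} alone.
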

Note that this implies that $(Y_{1}^{(k_1)}, Y_{2}^{(k_2)}, \dots Y_L^{(k_L)})$ tends weakly to a vector of independent centered Gaussians of variance $\beta^2 q$. Correlations are only present in the limit between effective fields at the same site, i.e. between $Y_i^{(k)}$ with the same $i$. If the AT condition \eqref{e:AT} is satisied $q=\tilde{q}$ by Theorem \ref{thm-contract-cond}, which corresponds to the re-centered effective fields being perfetly correlated and therefore aligning in the double limit. If however the AT condition is  not satisfied, we observe $\tilde{q}<q$ , hence in the double limit the centred effective fields are not perfectly correlated. They share a macroscopic Gaussian as a common trunk, but retain an also macroscopic independent Gaussian part, that is reshuffled each iteration. 

We will see through Stein's method that the weak convergence of the re-centred effective fields give a new perspective on the AMP algorithms: In fact, the observed weak convergence and law of large numbers are two sides of the same coin. To explain this further we proceed with a high level sketch in the next section. Thereafter we discuss and compare to previus work on the topic in Section \ref{bolth-sec}. We then proceed to proof all results step by step: discussing the approximate Gaussianity of effective fields by Stein's method in section \ref{steinTAP-sec} and a law of large numbers for covariances in Section \ref{covlocf-sec}. This leads to a deterministic propagation of variances and covariances in the $N$-limit, which requires a fixed point and contractivity analysis of $\phi$ and $t\rightarrow \psi(t,q,q)$ discussed in Section \ref{fp-sec} to conclude proofs in Section \ref{contract-sec}.

\section{Sketch of the proof and the image}

 In order to prove convergence of the algorithm \eqref{TAP-rec} we develop a treatment which radically differs from \cite{b1, b2}: in a first step we will establish, by Stein's method,  optimal $N-$bounds on the distance between the law of the random vector $(Y_i^{(k)})_{(i,k)\in I}$ for some fixed finite $I\subset \N\times\N$ and a "conditional" multivariate Gaussian, cfr. Theorem \ref{Stein-TAP} below.  The Gaussian vector is centered with entries of variance 
$\be^2 q^{(k)}_N$, which are independent for different sites (values of $i$).  

By exchangeability,  we will then deduce that 
\beq \label{e:psihatdef}
q_N^{(k+1)} \defi \frac{1}{N} \sum_{i=1}^N  \tanh\left( h+ Y_i^{(k)} \right)^2 \approx \EE\left[ \tanh\left( h+ \be \sqrt{q_N^{(k)}} Z \right)^2 \Bigg{|} q_N^{(k)} \right] \defi \phi\left( q_N^{(k)}\right)\,.
\eeq

In a third and final step we show contractivity of the $ \phi$-function for any $\beta,h$, see Theorem \ref{unif_conv_thrm}. The convergence of the algorithm will then follow from the convergence of $q_N^{(k,k')} \to q$ when $N$ and then $k,k'$ $\to \infty$ under the AT condition \eqref{e:AT}. 

\noindent For $L \in \N$ and any choice of sites and iteration numbers $i_1, \dots, i_L, k_1, \dots, k_L \in \N$ and \\ \noindent$N\geq \max_{l\leq L} i_l$,  consider the random vector 
\beq
\by := (Y_{i_l}^{(k_l)})_{l\leq L}\,.
\eeq

For $r \in \N$ we define the following spaces of test functions 
\begin{equation}  
\mathcal{H}^r := \{ f\in \mathcal{C}^{r+1}(\R^L,\R): |\partial_\alpha f|_\infty \leq 1 \mbox{  for } |\alpha|\in\{1,\cdots, r\}, |\partial_\alpha f|_\infty<\infty \mbox{ for } |\alpha|=r+1 \},
\end{equation} 
where $\alpha$ goes over every $|\alpha|$-fold partial derivative (see \eqref{multiindex1}, \eqref{multiindex2} for details). With this we define the associated distances between the laws of integrable random variables on $\R^L$ 
\begin{equation} \label{e:dr}
d_{r}(X,X') = \sup_{f\in \mathcal{H}^r}|\E[f(X)]-\E[f(X')]| \, .
\end{equation}
 \emph{Integral probability metrics} of this kind are well known and have been studied in e.g.~\cite{GL} and the references therein. We also consider a random vector 
\beq
{\bf  Z} = ( Z_{i_l}^{(k_l)})_{l \leq L}\,,
\eeq 
which,  {\it conditionally} on the $q_N^{\cdot, \cdot}$-s,  is a multivariate centered Gaussian with covariance 
\beq \label{covarianza}
\Sigma(l, l') := \EE\left[   Z_{i_l}^{(k_l)}  Z_{i_{l'}}^{(k_{l'})}  \right] = 
\begin{cases}
\beta^2 q^{(k_l,k_{l'})}_N  & \text{if} \; i_l = i_{l'}, \\
0 & \text{otherwise},
\end{cases} \qquad 
l, l' = 1 \dots L \,.
\eeq

\begin{theorem}(Proximity of the effective fields to a Gaussian).  \label{Stein-TAP} For all $\beta,  h \in \R$ and any $L\in\N$, $(k_l,i_l)_{l\leq L}\in (\N \times \{1,\cdots,N\})^L$, there exists $C\defi C(k, L, \beta, h)>0$  such that   
\begin{equation}
d_{2}\left( \by,  {\bf Z}\right) \leq \frac{C}{\sqrt{N}}\,, \quad \forall {N \in \N}\,.
\end{equation} 
\end{theorem}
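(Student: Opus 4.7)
The plan is to run the multivariate Stein scheme for Gaussian approximation, in which the Onsager correction of choice~(II) plays exactly the role of the ``missing term'' needed to close Gaussian integration by parts into the Stein identity. I treat choice~(II) throughout; case~(I) produces the same structure up to an explicit lower-order remainder of the type highlighted in~\eqref{restino}.

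Fix $f\in\mathcal{H}^2$, and set $\F_q:=\sigma\{q_N^{(k_l,k_{l'})}:l,l'\le L\}$. Let $u^\Sigma$ be the solution of the multivariate Stein equation
\[
\sum_{l,l'}\Sigma(l,l')\,\partial_l\partial_{l'}u^\Sigma(x)-\sum_l x_l\,\partial_l u^\Sigma(x)=f(x)-\E[f({\bf Z})\mid\F_q],
\]
constructed via the Ornstein-Uhlenbeck semigroup associated with $N(0,\Sigma)$. Standard estimates yield the crucial bounds $\|\partial_\alpha u^\Sigma\|_\infty\le \tfrac{1}{|\alpha|}\|\partial_\alpha f\|_\infty$ for $|\alpha|\ge 1$ (uniformly in $\Sigma$), together with analogous control on the partial derivatives of $u^\Sigma$ with respect to the entries of $\Sigma$. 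Taking expectation at $\by$ reduces $d_2(\by,{\bf Z})=\sup_f|\E[f(\by)-f({\bf Z})]|$ to estimating the Stein discrepancy
\[
\E\!\left[\sum_{l,l'}\Sigma(l,l')\,\partial_l\partial_{l'}u^\Sigma(\by)-\sum_l Y_{i_l}^{(k_l)}\,\partial_l u^\Sigma(\by)\right].
\]

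The second sum is transformed through the Stein-type identity produced by Gaussian partial integration in the $g_{i_l,j}$-variables. For a smooth function $g(y,\Sigma)$,
\[
\E\!\left[\frac{\be}{\sqrt N}\sum_j g_{i_l,j}m_j^{(k_l)}\,g(\by,\Sigma)\right]=\E\!\left[\frac{\be}{\sqrt N}\sum_j(\md_{i_l,j}m_j^{(k_l)})\,g(\by,\Sigma)\right]+\E\!\left[\frac{\be}{\sqrt N}\sum_j m_j^{(k_l)}\,\md_{i_l,j}g(\by,\Sigma)\right],
\]
and by the very definition of choice~(II) the first right-hand term equals $\E[\On_{i_l}^{(k_l)}\,g(\by,\Sigma)]$: this is the exact cancellation pre-announced in~\eqref{gpi}. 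Consequently $\E[Y_{i_l}^{(k_l)}\,g(\by,\Sigma)]=\E[\tfrac{\be}{\sqrt N}\sum_j m_j^{(k_l)}\,\md_{i_l,j}g(\by,\Sigma)]$. Specialising to $g=\partial_l u^\Sigma$, the chain rule for $\md_{i_l,j}$ yields
\[
\E\bigl[Y_{i_l}^{(k_l)}\partial_l u^\Sigma(\by)\bigr]=\sum_{l'}\E\!\left[A_{l,l'}\,\partial_l\partial_{l'}u^\Sigma(\by)\right]+\mathcal R_l,
\]
where $A_{l,l'}:=\tfrac{\be}{\sqrt N}\sum_j m_j^{(k_l)}\md_{i_l,j}Y_{i_{l'}}^{(k_{l'})}$ and $\mathcal R_l$ collects the contributions from differentiating $u^\Sigma$ through its $\Sigma$-slot.

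Combining the two displays bounds the Stein discrepancy by
\[
\sum_{l,l'}\bigl\|\Sigma(l,l')-A_{l,l'}\bigr\|_{L^1}\cdot\|\partial_l\partial_{l'}u^\Sigma\|_\infty+\sum_l|\mathcal R_l|,
\]
so proving the theorem reduces to showing both contributions are $O(1/\sqrt N)$. A direct expansion of $\md_{i_l,j}Y_{i_{l'}}^{(k_{l'})}$ isolates the ``diagonal'' piece $\tfrac{\be}{\sqrt N}\,\mathbf{1}_{\{i_l=i_{l'}\}}\,m_j^{(k_{l'})}$, which assembles into the main term $\be^2q_N^{(k_l,k_{l'})}\mathbf{1}_{\{i_l=i_{l'}\}}=\Sigma(l,l')$; the remaining contributions involve partial derivatives of the iterates $\md_{a,b}m_c^{(k)}$ and have $L^1$-size $O(1/\sqrt N)$. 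The term $\mathcal R_l$ is treated similarly and is actually smaller because $\md_{i_l,j}q_N^{(k,k')}=O(1/N)$. The main obstacle is therefore the quantitative derivative control provided by Theorem~\ref{derivcontrolthrm}: one needs to bound, by induction on $k$, all partial derivatives $\md_{a,b}m_c^{(k)}$ at scale $1/\sqrt N$ with constants depending only on $k,L,\be,h$, thereby taming the infinite chain-rule cascade pre-figured in~\eqref{restino}. Granting that control, the displays above deliver the claimed rate $C(k,L,\be,h)/\sqrt N$.
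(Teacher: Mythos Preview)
Your outline is the paper's own approach: Stein equation with random $\Sigma$, Gaussian integration by parts on $Y_{i_l}^{(k_l)}\partial_l u^\Sigma$, identification of the main term $\Sigma(l,l')$ inside $A_{l,l'}$, and control of the $\Sigma$-derivative remainder. The architecture is right, but there is a genuine gap in the step where you assert that ``the remaining contributions involve partial derivatives of the iterates $\md_{a,b}m_c^{(k)}$ and have $L^1$-size $O(1/\sqrt N)$.''

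The remainder in $A_{l,l'}-\Sigma(l,l')$ is $\tfrac{\be}{\sqrt N}\sum_j m_j^{(k_l)}\Delta^{(k_{l'})}_{i_{l'};\,i_l,j}$ with $\Delta^{(k)}_{x;y,z}=\tfrac{\be}{\sqrt N}\sum_p g_{x,p}\,\md_{yz}m_p^{(k)}-\md_{yz}\On_x^{(k)}$, which is \emph{not} a pure sum of $\md_{a,b}m_c^{(k)}$'s: it still carries a factor $g_{x,p}$. Theorem~\ref{derivcontrolthrm} alone only yields $\|\Delta^{(k)}_{x;y,z}\|_{L^2}=O(N^{-1/2})$, and after the outer sum $\tfrac{1}{\sqrt N}\sum_j$ this gives $O(1)$, not $O(N^{-1/2})$. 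The paper closes this gap with Theorem~\ref{deltathrm} (via Lemma~\ref{PathwiseLem}): a \emph{second} Gaussian integration by parts in the $g_{x,p}$ inside $\Delta$ produces the cancellation $g_{x,p}\,\partial_{\hat\alpha}m_p-\partial_{\hat\alpha+\delta_{\{x,p\}}}m_p$, which upgrades the bound to $\E[(\Delta^{(k)}_{x;y,z})^2]\le C/N^2$. Only then does the remainder become $O(N^{-1/2})$. You should flag this second IBP explicitly; it is not a corollary of the derivative bounds you invoke. Relatedly, your dismissal of choice~(\ref{On-cla}) as ``the same structure up to an explicit lower-order remainder'' hides work: there the Onsager cancellation is not exact, the error $\mathcal E^{(k)}_x$ in~\eqref{e:def-DeltaE} is nonzero, and bounding it (Lemma~\ref{OnsagerErrorLem}) requires an induction on $k$ that again uses the $\Delta$-estimate.
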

The above statement is in itself not conclusive: We still have to prove that the $q$-s,  the scalar products of the iterates , see \eqref{def-qkk}, appearing in the covariance \eqref{covarianza} converge in the large-$N,k$ limit.  Theorem \ref{q-prop} below is the key step towards this fact. We will see that we have an inductive relation between the $q_N^{\cdot, \cdot}$-s: Intuitively, it is due to the fact that
\begin{multline}
q^{(k+1,k'+1)}_N \approx \E q^{(k+1,k'+1)}_N=\sum_{i=1}^N\frac{1}{N}\E\left[ m^{(k+1)}_i m^{(k'+1)}_i\right]=\E\left[\tanh\left(h+Y^{(k)}_1\right)\tanh\left( h+ Y^{(k')}_1\right)\right] \\
\approx \E\left[\tanh\left(h+ Z_1^{(k)}\right) \tanh\left(h+ Z_1^{(k')}\right)\right]   =\E \psi\left( q_N^{(k, k')}, q_N^{(k)}, q_N^{(k')}\right)\approx \psi\left( q_N^{(k, k')}, q_N^{(k)}, q_N^{(k')}\right),\
\end{multline}
the first and last equality by concentration and the first line to the second line by Theorem \ref{Stein-TAP}. The next theorem makes this precise. To state it we define for $t\geq 0$
\begin{equation}
\label{e:chi}\chi(t) := \EE\left[\tanh(h+ \beta \sqrt{t} Z )\right]\,.
\end{equation}
\begin{theorem}\label{q-prop}(Propagation of the covariance entries). For \emph{any} $\beta,  h\in\R$,
and $k,k'\geq 1$ there exists $C\defi C(k,k',\beta, h)>0$ such that:
\begin{equation}
	\label{e:qprop-psi}
	\E\left[\left(q^{(k+1,k'+1)}_N-  \psi\left( q_N^{(k, k')}, q_N^{(k)}, q_N^{(k')} \right) \right)^2\right] \le \frac{C}{\sqrt N}
	\end{equation}
	and
	\begin{equation}
		\label{e:qprop-chi}
		\E\left[\left( q^{(k+1,1)}_N - \chi\left(q^{(k)}_N\right) \frac{1}{N}\sum_{i=1}^N  m^{(1)}_i\right)^2\right]  \le \frac{C}{\sqrt N}.
		\end{equation}
\end{theorem}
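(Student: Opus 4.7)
The plan is to decompose $\E[X_N^2]=(\E X_N)^2+\Var(X_N)$, where I set $X_N := q^{(k+1,k'+1)}_N-\psi(q^{(k,k')}_N, q^{(k)}_N, q^{(k')}_N)$, and bound both contributions by $C/\sqrt N$. Since by A1) the starting value $\bm^{(1)}$ is independent of ${\bf G}$, I first condition on $\bm^{(1)}$ (rendering it deterministic) and prove the conditional bounds with a constant uniform in $\bm^{(1)}$; the unconditional claim then follows by taking expectation. The heuristic in the multi-line display immediately before the theorem is exactly this strategy; the only work is to turn each ``$\approx$'' into a quantitative $O(1/\sqrt N)$ error.

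For the bias, exchangeability of the sites yields $\E[q^{(k+1,k'+1)}_N\mid \bm^{(1)}]=\E[\tanh(h+Y_1^{(k)})\tanh(h+Y_1^{(k')})\mid\bm^{(1)}]$. The function $f(y_1,y_2):=\tanh(h+y_1)\tanh(h+y_2)$ has uniformly bounded partials of all orders, so $c\,f\in\mathcal H^2$ for some explicit $c>0$. Applying Theorem~\ref{Stein-TAP} conditionally on $\bm^{(1)}$ with $L=2$, $i_1=i_2=1$, $k_1=k$, $k_2=k'$, and then using that by \eqref{covarianza} together with \eqref{e:psi} one has $\E[f(Z_1^{(k)},Z_1^{(k')})\mid\bm^{(1)}]=\E[\psi(q^{(k,k')}_N,q^{(k)}_N,q^{(k')}_N)\mid \bm^{(1)}]$, gives $|\E[X_N\mid \bm^{(1)}]|\le C/\sqrt N$.

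For the variance, set $A_i:=m_i^{(k+1)}m_i^{(k'+1)}$; then $\Var(q^{(k+1,k'+1)}_N\mid\bm^{(1)})=N^{-2}\sum_{i,j}\Cov(A_i,A_j\mid\bm^{(1)})$. The diagonal is $O(1/N)$ since $|A_i|\le 1$. For $i\neq j$, applying Theorem~\ref{Stein-TAP} with $L=4$ to $(Y_i^{(k)},Y_i^{(k')},Y_j^{(k)},Y_j^{(k')})$ -- whose Gaussian surrogate is block-diagonal across the two sites by \eqref{covarianza} and therefore factorises -- together with the $L=2$ estimate on each single-site marginal, yields $|\Cov(A_i,A_j\mid\bm^{(1)})|\le C/\sqrt N$: the Gaussian joint expectation coincides with the product of the Gaussian marginal expectations by block-diagonality, leaving only three Stein errors to sum. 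Hence $\Var(q^{(k+1,k'+1)}_N\mid\bm^{(1)})\le C/\sqrt N$. For $\Var(\psi(\ldots)\mid\bm^{(1)})$, smoothness of $\psi$ on its compact effective domain (see Section~\ref{fp-sec}) gives, by the mean value theorem, a bound by $C\sum\Var(q^{(\cdot)}_N\mid\bm^{(1)})$; each of these variances is in turn $O(1/\sqrt N)$ by the very same $L=4$ covariance argument applied one iteration lower, the base case $q^{(1,1)}_N$ being $\bm^{(1)}$-measurable and hence of zero conditional variance.

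The second identity \eqref{e:qprop-chi} follows the same scheme. Conditionally on $\bm^{(1)}$ the bias reduces to Theorem~\ref{Stein-TAP} with $L=1$ against the test function $y\mapsto\tanh(h+y)$ applied site by site against the bounded weights $m_i^{(1)}$; exchangeability is lost but $|m_i^{(1)}|\le 1$ so the site average still gives an $O(1/\sqrt N)$ bias. The variance of $q^{(k+1,1)}_N$ is controlled by $L=2$ Stein on $(Y_i^{(k)},Y_j^{(k)})$ for $i\neq j$, and the surrogate $\chi(q^{(k)}_N)\cdot N^{-1}\sum_i m_i^{(1)}$ is handled via the Lipschitz property of $\chi$ (again from Section~\ref{fp-sec}) together with the covariance bound on $q^{(k)}_N$ obtained above. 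The main technical point throughout is the off-diagonal decorrelation $|\Cov(A_i,A_j\mid\bm^{(1)})|=O(N^{-1/2})$ for $i\neq j$: this is where the block-diagonal structure of \eqref{covarianza} is essential, and the bookkeeping one must perform carefully is the explicit rescaling showing that the polynomial-in-$\tanh$ test functions lie in the class $\mathcal H^2$ of Theorem~\ref{Stein-TAP}, together with tracking the constants through the product decomposition of the Gaussian surrogate.
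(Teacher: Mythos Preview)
Your approach is essentially the paper's: bias via Theorem~\ref{Stein-TAP}, concentration of $q^{(k+1,k'+1)}_N$ via the $L=4$ block-diagonal Stein comparison, and concentration of $\psi$ via its Lipschitz property (the paper packages these as Lemma~\ref{prop-E}, Lemma~\ref{q-conc}, Corollary~\ref{conc-psi}). Two points need cleaning up. First, conditionally on $\bm^{(1)}$ the sites are \emph{not} exchangeable in general --- the law of $Y_i^{(k)}$ depends on the specific $m_i^{(1)}$ through the missing self-term and the Onsager correction --- so your identity $\E[q^{(k+1,k'+1)}_N\mid\bm^{(1)}]=\E[\tanh(h+Y_1^{(k)})\tanh(h+Y_1^{(k')})\mid\bm^{(1)}]$ is false as stated; instead apply Theorem~\ref{Stein-TAP} at each site $i$ separately, note the constant there is uniform in $i$, and average. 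Second, and more substantively, your claim that ``the Gaussian joint expectation coincides with the product of the Gaussian marginal expectations by block-diagonality'' is incorrect: block-diagonality of $\Sigma$ gives factorisation only \emph{conditionally} on $\Sigma$, so for $i\neq j$ one gets $\E[f(Z_i)f(Z_j)]=\E[\psi(q^{(k,k')}_N,q^{(k)}_N,q^{(k')}_N)^2]$ while $\E[f(Z_i)]\E[f(Z_j)]=(\E\psi)^2$, and the gap is exactly $\Var(\psi)$. You do control this quantity via Lipschitz and an induction on the iteration level, but as written the $\Cov(A_i,A_j)$ bound is presented as self-contained when it is not; the induction on $k$ (base case: $q^{(1,\cdot)}_N$ is $\bm^{(1)}$-measurable) must be made explicit and fed into the off-diagonal covariance estimate before the level-$(k+1)$ bound closes. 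With these reorganisations the argument goes through and matches the paper's.
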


  In case that $q^{(1)}_N$ and $\frac{1}{N}\sum_{i=1}^N  m^{(1)}_i$ converge as $N\to\infty$, recursive application of Theorem~\ref{q-prop} implies that also $q^{(k,k')}_N$ has a limit in probability as $N\to\infty$. By analysing the $k,k'\to\infty$ limit, we obtain the convergence of $q^{(k,k')}_N$ from Theorem \ref{thm-contract-cond}, which in turn seamlessly yields the convergence of the algorithm: see Proposition \ref{c:convergence}. \\

What thus stands behind our results, and is fully captured by the Stein's method, is the following beautiful picture: Provided the mean and radius of the starting magnetisations converge we have for any choice of parameters (inverse of temperature and external field) that any fixed family of effective fields converges weakly, in the infinite volume limit ($N \to \infty$), to a Gaussian field with covariances depending deterministically on limiting starting radius and mean. This is due to the following interplay: the Gaussian convergence (see Theorem \ref{Stein-TAP}) of re-centered effective fields implies a law of large numbers (see Theorem \ref{q-prop}) for the limiting covariance structure of re-centered effective fields one iteration further. This procedure explains, how the highly non linear TAP-transformation leaves finitely many effective fields Gaussian and propagates the starting values only through evolution of the covariance: perhaps surprisingly, this is true in any $(\beta,h)$-phase. 

Even more interesting is the limiting behaviour when the second limit is taken, that of the iteration parameters: Here the picture does become (highly) parameter dependent as well as fully independent of the starting conditions. This behaviour emerges as iterating the propagation of the covariance structure approaches an attractive fixed point, which turns out to be unique in all cases. This explains the vanishing dependence on starting conditions\footnote{except for $h=0$ and $m^{(1)}\approx 0$ }.  See Theorem \ref{thm-contract-cond} and Proposition \ref{weak-conv-clt}. The temperature dependence crucially comes into play when analysing the fixed point. The system stabilises around a covariance $\tilde{q}$. If the AT condition is satisfied we observe $\tilde{q}=q$, which means that the effective fields from iteration to iteration macroscopically align, i.e. we obtain approximate solutions of the TAP equation. If the AT condition is not satisfied we observe $\tilde{q}<q$ hence the re-centered effective fields, partially align around a singular Gaussian vector shared among all iterations, but they retain some macroscopic randomness, which is independently reshuffled each iteration.  See Theorem \ref{thm-contract-cond}. We now remark on the special case of vanishing external field which exhibits some special behaviour:
\begin{itemize}
	\item
for vanishing external field and  under the AT condition, i.e. $h=0$ and $|\beta| \leq 1$, the iterates converge towards a degenerate Gaussian field of vanishing variance; this is due to the variance propagation $ \phi$ having $0$ as a fixed point for $h=0$, which is unique and attractive iff $|\beta|\leq 1$:  indeed the variances of the effective fields turn out to be approximate solutions of  the limiting fixed point equation for the order parameter of the theory (the $q$-function, in this case a scalar), and indeed converge upon iteration to the unique fixed point. As in this case the unique solution is zero; everything collapses to this trivial limit.

\item for vanishing external field and $|\beta| > 1$ (in low temperature) the variances of the limiting Gaussian field still converge to the same fixed point, and so do the covariances (!). Yet, and crucially: the limiting variance now is positive\footnote{ except for $m^{(1)}\approx 0$}, while the covariances remain zero. Hence the effective fields are fully reshuffled each iteration, leaving the iteration chaotic permanently. 

\end{itemize}
Our proof via Stein thus unravels multiple aspects of the intricate behaviour of mean field spin glasses, in particular those pertaining to the delicate interplay of almost sure phenomena (such is the convergence of the elements of the covariance matrix of the limiting Gaussian fields) and weak limits (of the effective fields). As a matter of fact, it is impossible to say which of these phenomena comes first : things are deeply intertwined, and can hardly be disentangled. 

As a final remark we want to mention, that this new method does not really depend on the Gaussian nature of the disorder and the implementation of this method for other mean fields models is left to future research. 

\section{Previous work: The Bolthausen's Algorithmus}\label{bolth-sec}

\noindent An algorithmic construction of the solutions to a {\it modified} version of \eqref{e:TAP} has been obtained by Bolthausen \cite{b1, b2} (see also \cite{CT} for another iterative scheme),  who studied the {\it approximate version} of TAP equations
\beq \label{mean4}
m_i = \tanh\left(h + \frac{\be}{\sqrt{N}} \sum_{j=1}^N g_{ij} m_j  - \be^2 \left( 1-q \right) m_i\right)\qquad i=1 \dots N, 
\eeq
where the EA-parameter $q_N$ from \eqref{e:TAP} is replaced by a scalar\footnote{Corresponding,  albeit only {\it a posteriori},  to the $\PP$-almost sure $q_N$-limit.} $q$.
To avoid confusion,  we shall refer to the {\it modified} TAP equations \eqref{mean4} as {\sf B-TAP}.  Bolthausen's algorithm for {\sf B-TAP} is initialised with the deterministic values 
\beq \label{init}
\bm^{(0)}_i := {\boldsymbol 0},  \qquad \bm^{(1)}_i := \sqrt{{\boldsymbol q}}, \quad  \forall i  \leq N
\eeq
which are then propagated by a two step Banach procedure (${k-1, k} \to k+1$) given by
\beq \bea \label{e:two-step}
m_i^{(k+1)} =\tanh\left(h + \frac{\be}{\sqrt{N}} \sum_{j=1}^{N} g_{ij} m_j^{(k)}  - \be^2 \left( 1-q \right) m_i^{(k-1)} \right)\,, \qquad k \geq 1. 
\eea \eeq
For $h\neq 0$, it is shown in the seminal paper \cite{b1} via some delicate analysis involving Gaussian projections,  conditioning techniques,  and law of large numbers that the following {\it pseudo-convergence} holds:
\beq \label{weird}
\lim_{k, l \to \infty}\limsup_{N\to \infty}  \frac{1}{N} \sum_{i=1}^N\E\left[ \left( m_i^{(k)} - m_{i}^{(l)} \right)^2\right]  = 0,
\eeq
if and only if $(\be, h)$ satisfy the AT condition \eqref{e:AT}. \\

\noindent The above treatment,  to our knowledge, is the only procedure available so far for a rigorous analysis of {\sf B-TAP}  up to the AT line, but suffers from a number of shortcomings.

\begin{itemize}
\item First of all,  the Onsager correction: by replacing $q_N$,  the random and finite-size EA-order parameter,  with $q$, the deterministic solution to \eqref{e:q} one artificially injects a posteriori information into the model a priori: It does not allow to understand whether the expected behaviour emerges by itself or was built in by this choice of $q$ and starting point. Indeed,  by work of Plefka \cite{plefka},  the TAP equations are known to be the critical points of the expansion of the Gibbs potential to second order, but the Onsager correction \eqref{mean4} is not compatible with Plefka's framework nor with the original TAP derivation \cite{TAP},  see also \cite{GSS,kistler} for more on this (and related) issue(s).   

\item A second delicate point is the initialisation \eqref{init}: the {\sf B-TAP}-algorithm is known to converge  for this very specific choice of starting value only,  somewhat corresponding to the odd situation where one solves a fixed point equation by starting virtually 'in' the solution.  Since the procedure is but a sophisticated Banach-iteration,  this restriction is unnatural,  and hides key phenomena concerning the onset of the spin glass phase.  

\item A third point is that the Bolthausen algorithm does not allow to investigate the case $h=0$.

\item Finally we want to make the point, that the sophisticated conditioning technique developed by Bolthausen seems to be dependent on the Gaussian nature of the disorder, but certainly can only work if at least the effective fields are approximately Gaussian. The method presented here is more robust in the sense that approximate integration by parts and Stein operators for different distributions are established tools, which allow approaching these scenarios.

\end{itemize}

\noindent  Nevertheless these magnetizations, provided by the pseudo-convergence, allowed Bolthausen \cite{b2} to give a proof of the replica symmetric formula for the free energy of the SK model in high temperature. Brennecke and Yau \cite{br} proved an extension of this result. However, neither result is valid for all $(\be, h)$ satisfying the AT condition \eqref{e:AT}, which is believed to separate the high-temperature from the low-temperature  regime. Extending aforementioned results up to AT is an open problem.\\

\noindent  Bolthausen's algorithm and conditioning techniques have found wide applications,  in particular they have been used by Montanari and coauthors to analyse Approximate Message Passing (AMP for short) in a large class of models,  see \cite{AMS,  M1, M2} and references therein,  see also \cite{ZK} for a theoretical physics' take.  As our approach does not require Gaussian disorder we belief that this new method is applicable to an even wider range of models and problems. Also considering the possibility of adapting Stein's method e.g. to the diluted case where the limiting distribution of effective fields is expected to be non Gaussian, by considering an appropriate Stein operator.  \\

\section{Stein TAP}\label{steinTAP-sec}
\subsection{Overview}
The main objective of this section is to prove Theorem \ref{Stein-TAP} using Stein equation. Before we tackle this task we prove Proposition \ref{weak-conv-clt} from Theorem \ref{thm-contract-cond} and \ref{Stein-TAP}.
\begin{proof}[Proof of Proposition \ref{weak-conv-clt}]
	  Clearly 
	\begin{equation}\label{e:d2-cor}
		d_{2}\left( \by,  {\bf \mathcal Z}\right) \leq d_{2}\left( \by,  {\bf  Z}\right)+d_{2}\left( {\bf Z},  {\bf \mathcal Z}\right),
	\end{equation}
where the first term tends to $0$ as $N \to \infty$ by Theorem~\ref{Stein-TAP}. By Theorem~\ref{thm-contract-cond} $\Sigma$ converges to $\Sigma_\infty$ as $N\to\infty$ followed by $k_1,..,k_L\to\infty$. Now we consider the simple coupling ${\bf Z} = \Sigma^{1/2} Z$ and ${\bf \mathcal Z }= \Sigma_\infty^{1/2} Z$, for a standard Gaussian vector $Z$ independent of $\Sigma$ and use for $f\in \mathcal{H}^2$ the estimate 
\begin{equation} \label{simple_estimate}
\E\EE|f({\bf \mathcal Z})- f({\bf Z})| \leq |\|\nabla f\|_2|_\infty \;\EE \|Z\|_2 \E\| \Sigma^{1/2}-\Sigma^{1/2}_\infty\|_{op} \leq L \E\| \Sigma^{1/2}-\Sigma^{1/2}_\infty\|_{op},
\end{equation}
Since all operations involved on the r.h.s. are continuous in the entries of $\Sigma$ we see that the second term of \eqref{e:d2-cor} vanishes with $N\to \infty$ followed by $k_1,..,k_l \rightarrow \infty$, since the operator norm involved is bounded: 
$$\| \Sigma^{1/2}\|_{op} \leq \| \Sigma^{1/2}\|_{F}\leq L |\Sigma^{1/2}|_\infty \leq L \sqrt{|\diag(\Sigma)|_\infty} \leq L \beta,$$
where $\|.\|_F$ is the Frobenius norm. From \eqref{e:d2-cor} vanishing, we obtain the asserted weak convergence, since the vector space of finite linear combinations of $\mathcal{H}^2$ functions is dense in the continuous functions when restricting to a compact by Stone Weierstrass Theorem. The restriction to compact support is then removed using tightness of both sequences, which holds since the covariance matrices in question have entries bounded uniformly in $N$.   
\end{proof}

We will work with test functions in $\mathcal H^2$, since it is suitable to prove  weak convergence,  includes $\tanh$ and matches nicely with the regularity the Stein operator provides. To accomodate for $\Sigma$  in Theorem \ref{Stein-TAP} being random we slightly modify the classical Stein equation.	One easily checks that $\Sigma$ is in fact symmetric and positive semi-definite, and consequently $\Sigma^{1/2}$ exists and is symmetric. For $Z$ a standard Gaussian on $\R^L$, $F\in \mathcal{C}^3(\R^L,\R)$ with $|\partial_\alpha F|_\infty <\infty$ for $|\alpha|\in\{1,2,3\}$ we define 
	$$V_F(x) = -\int_0^\infty \EE_Z\left[ F(x e^{-u}+ \sqrt{1-e^{-2u}}Z) \right]-\EE_Z[F(Z)]  du.$$
	By the multivariate Stein equation  (see e.g.\ Lemma 2.6 in~\cite{CGS}),
	\begin{equation} \label{SteinOP} F\left(x \right) - \EE_Z F\left(Z\right)	= \Tr \nabla^2V_F\left(x\right)- x^T \nabla V_F\left(x\right).\end{equation}
	For $f\in \mathcal H^2$ we define  
	$$ v(y) = -\int_0^\infty \EE_Z\left[ f( y e^{-u}+ \sqrt{1-e^{-2u}} \Sigma^{1/2}Z) \right]-\EE_Z[f(\Sigma^{1/2}Z)]  du,$$
	where we recall that $\EE_Z$ is the expectation only over $Z$, which is chosen to be independent of everything else, i.e. of the $\left(g_{ij}\right)$ and $m^{(1)}$, hence also of  $\Sigma$.
	
	Observe that by choosing 
	$$ F\left( y\right)=f\left(\Sigma^{1/2}y\right), \quad \mbox{ verifying } \quad V_F\left(y\right)= v(\Sigma^{1/2}y)$$
	and plugging into \eqref{SteinOP} with $x= \Sigma^{1/2}y$, we obtain 
	\begin{equation}\label{SteinOP2}  f\left(x \right) - \EE_Z f\left(\Sigma^{1/2}Z\right)	= \Tr \left(\Sigma\nabla^2  v \left(x\right)\right)- x^T \nabla v\left(x\right).\end{equation}
	Now note that for $|\alpha|\in\{1,2,3\}$
	\begin{equation}\label{e:gl}
		\partial_\alpha v(y) = -\int_0^\infty e^{-u|\alpha|} \EE_Z\left[ \partial_\alpha f( y e^{-u}+ \sqrt{1-e^{-2u}} \Sigma^{1/2}Z) \right]du,
	\end{equation}
	which gives us $v\in \mathcal H^2$, since $f\in \mathcal H^2$ by assumption and 
	\begin{equation}\label{steinest}|\partial_\alpha v|_\infty \leq \frac{1}{|\alpha|} |\partial_\alpha f|_\infty. \end{equation}
	Using $x= \left(Y^{(k_l)}_{i_l}\right)_{l\leq L}$ in \eqref{SteinOP2}, writing $v_l= \partial_{e_l}v$ and taking expectations we have  
	
	\begin{align}\label{e:Stein-tr}
		& \hspace{.5cm} \E f\left((Y_{i_j}^{(k_j)})_{j\leq L}\right) - \E f\left(Z_{\Sigma_{(i,j)}}\right)\\ 
		&=  \E\left[ \Tr\left(\Sigma \nabla^2  v \left((Y_{i_j}^{(k_j)})_{j\leq L}\right)\right)\right]
		- \E\left[ (Y_{i_j}^{(k_j)})_{j\leq L}^T \nabla v  \left((Y_{i_j}^{(k_j)})_{j\leq L}\right)\right]\notag \\
		&=  \sum_{l=1}^L \E\left[ \beta^2 \sum_{l'=1}^L\delta_{i_l,i_{l'}}\frac{ (m^{(k_l)})^T m^{(k_{l'})} }{N} \partial_{e_{l'}}  v_l\right] -\E\left[  Y_{i_l}^{(k_l)}  v_l  \right] \notag \\
	\end{align}
	where we simplify notation by suppressing the argument of $v_l$, which is $(Y_{i_j}^{(k_j)})_{j\leq L}$. In order to prove Theorem  \ref{Stein-TAP}, it remains to show that the r.h.s. is bounded in absolute value by $C/\sqrt{N}$. To this end we need to control the following quantities: \begin{equation}\label{e:def-DeltaE}
	\Delta^{(k)}_{x;y,z} := \frac{\beta}{\sqrt{N}} \sum_l g_{x,l}  {\mathfrak d}_{yz}\,m_l^{(k)}
	- {\mathfrak d}_{yz}\,  \On_x^{(k)}, \hspace{1cm} 
	\mathcal{E}_{x}^{(k)} := \frac{\beta}{\sqrt{N}}\sum_{l\neq x}  {\mathfrak d}_{xl}\, m_l^{(k)} - \On_x^{(k)}.
\end{equation}
This is the object of study of our next Theorem, which is shown in Section \ref{detasec}. 
\begin{theorem}\label{deltathrm} For $\beta\in \R$, $k\in\N$ there exists $C=C_{k,\beta}>0$, s.t.  
	$$ \E\left[ \left(\Delta^{(k)}_{x;y,z}\right)^2\right] \leq \frac{C}{N^2} \quad \mbox{ and } \quad \E\left[ \left(\mathcal{E}^{(k)}_{x}\right)^2\right] \leq \frac{C}{N}$$
	for all $h\in \R$, $N\in \N$ and $x, y, z\leq N$ with $y\neq z$. 
\end{theorem}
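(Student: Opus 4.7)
The plan is to proceed by induction on $k$, proving both bounds simultaneously. For the base case $k=1$, assumption A1 guarantees that $\bm^{(1)}$ is independent of $\mathbf{G}$, so that $\mathfrak{d}_{yz}m_l^{(1)}=0$ for all $y\neq z$ and all $l$. Moreover $\On^{(1)}\equiv 0$ in both choices: trivially under \eqref{On-new}, where $\On^{(1)}$ is itself assembled from such derivatives, and under \eqref{On-cla} by A2 since $\bm^{(0)}=0$. Hence $\Delta^{(1)}\equiv\mathcal{E}^{(1)}\equiv 0$ pointwise.

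For the inductive step, the chain rule $\mathfrak{d}_{yz}m_l^{(k+1)}=(1-(m_l^{(k+1)})^2)\,\mathfrak{d}_{yz}Y_l^{(k)}$ together with $\mathfrak{d}_{yz}g_{lj}=\delta_{ly}\delta_{jz}+\delta_{lz}\delta_{jy}$ (recall $y\ne z$) yields the fundamental identity
\begin{equation*}
\mathfrak{d}_{yz}m_l^{(k+1)} \;=\; \big(1-(m_l^{(k+1)})^2\big)\left[\frac{\beta}{\sqrt N}\big(\delta_{ly}m_z^{(k)}+\delta_{lz}m_y^{(k)}\big)+\Delta^{(k)}_{l;y,z}\right],
\end{equation*}
which cleanly splits off an explicit leading term of size $1/\sqrt N$ (supported on $l\in\{y,z\}$) from the inductive error $\Delta^{(k)}$. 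This identity immediately settles $\mathcal{E}^{(k+1)}_x$: under \eqref{On-new}, the convention $\mathfrak{d}_{xx}\equiv 0$ identifies $\sum_{l\ne x}\mathfrak{d}_{xl}m_l^{(k+1)}$ with $\sum_l\mathfrak{d}_{xl}m_l^{(k+1)}$, so $\mathcal{E}^{(k+1)}_x\equiv 0$. Under \eqref{On-cla}, the restricted sum kills the $\delta_{lx}$-terms, while the $\delta_{ll}m_x^{(k)}$ contributions reassemble into $\beta^2 m_x^{(k)}(1-q_N^{(k+1)})$ up to a single deterministic $O(1/N)$ diagonal correction; this cancels $\On^{(k+1)}_x$ to the required accuracy, and the remaining residual $\frac{\beta}{\sqrt N}\sum_{l\ne x}(1-(m_l^{(k+1)})^2)\Delta^{(k)}_{l;x,l}$ has $L^2$-squared bounded by $\beta^2\sum_l\E[(\Delta^{(k)}_{l;x,l})^2]\le C/N$ via Cauchy--Schwarz and the inductive hypothesis.

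For $\Delta^{(k+1)}$, substituting the identity produces
\begin{equation*}
\frac{\beta}{\sqrt N}\sum_l g_{xl}\,\mathfrak{d}_{yz}m_l^{(k+1)} \;=\; \frac{\beta^2}{N}\Big[g_{xy}\big(1-(m_y^{(k+1)})^2\big)m_z^{(k)}+g_{xz}\big(1-(m_z^{(k+1)})^2\big)m_y^{(k)}\Big]\;+\;\mathcal R^{(k+1)}_{x;y,z},
\end{equation*}
with $\mathcal R^{(k+1)}_{x;y,z}:=\frac{\beta}{\sqrt N}\sum_l g_{xl}(1-(m_l^{(k+1)})^2)\Delta^{(k)}_{l;y,z}$. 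The first bracket has $L^2$-squared of order $1/N^2$ directly from $\E[g_{xy}^2]=1$, so it remains to show that $\mathcal R^{(k+1)}_{x;y,z}-\mathfrak{d}_{yz}\On^{(k+1)}_x$ has $L^2$-squared of order $1/N^2$. Under \eqref{On-cla} one differentiates $\beta^2(1-q_N^{(k+1)})m_x^{(k)}$ via the product rule, reinvoking the identity for $\mathfrak{d}_{yz}m_x^{(k)}$ (the inductive bound on $\Delta^{(k-1)}$ handles the case $x\notin\{y,z\}$) and for $\mathfrak{d}_{yz}q_N^{(k+1)}=\frac{2}{N}\sum_l m_l^{(k+1)}\mathfrak{d}_{yz}m_l^{(k+1)}$. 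Under \eqref{On-new} one instead expands $\mathfrak{d}_{yz}\mathfrak{d}_{xj}m_j^{(k+1)}$ by the Leibniz rule, which introduces mixed second derivatives of $Y^{(k)}$ that must be tracked by an auxiliary inductive estimate of the same flavour.

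The main obstacle is the sharp bound on $\mathcal R^{(k+1)}_{x;y,z}$ itself: a naive Cauchy--Schwarz yields only $L^2=O(1/\sqrt N)$, one order weaker than required. To reach $O(1/N)$ one exploits the Gaussian nature of the $g_{xl}$'s via a second round of Gaussian integration by parts, rewriting $\E[g_{xl}g_{xl'}A_lA_{l'}]$ for $l\ne l'$ both distinct from $x$ (with $A_l:=(1-(m_l^{(k+1)})^2)\Delta^{(k)}_{l;y,z}$) as $\E[\mathfrak{d}_{xl}\mathfrak{d}_{xl'}(A_lA_{l'})]$; the chain rule then reduces this to quantities of the required $1/N^2$ order, provided one tracks along the way auxiliary $L^p$-bounds and bounds on mixed higher derivatives of $m^{(k)}$ in $\mathbf{G}$. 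The diagonal terms $l=l'$ and the boundary terms $l\in\{x\}$ contribute at most $O(1/N)$ in total and are absorbed harmlessly. It is this iterated-IBP step, together with the bookkeeping of higher-regularity inductive estimates, that constitutes the technical heart of the proof.
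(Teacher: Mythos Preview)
Your outline is on the right track and in spirit matches the paper's route, but the induction you set up does \emph{not} close. To pass from $\Delta^{(k)}$ to $\Delta^{(k+1)}$ you need, after the second Gaussian integration by parts, moment bounds on quantities like $\mathfrak d_{xl'}\Delta^{(k)}_{l;y,z}$ and $\mathfrak d_{xl}\mathfrak d_{xl'}\Delta^{(k)}_{l;y,z}$, and analogously $\mathfrak d_{yz}\Delta^{(k)}_{j;x,j}$ when you differentiate $\On^{(k+1)}_x$ under choice~\eqref{On-new}. None of these are in your inductive hypothesis, which only carries the second moment of $\Delta^{(k)}$ itself. You are aware of this and call it ``bookkeeping of higher-regularity inductive estimates'', but this is where the real work sits: one must strengthen the induction to control \emph{all} mixed derivatives $\partial_\alpha m_p^{(k)}$, in all $L^{2t}$, with the correct $N$-decay, and that decay is not simply $N^{-|\alpha|/2}$ but depends on the combinatorial structure of which coordinates of $\alpha$ are linked to $p$.

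The paper resolves exactly this by isolating the strengthened statement as a separate theorem (Theorem~\ref{derivcontrolthrm}): $\E\big[(\partial_\alpha m_p^{(k)})^{2t}\big]\le C\,N^{-t(|\alpha|+\mathcal C_\alpha(p))}$, where $\mathcal C_\alpha(p)$ counts connected components of the multigraph associated to $\alpha$ not containing $p$. This is the closed inductive hypothesis, proved by its own induction on $k$ via Fa\`a di Bruno and Gaussian partial integration. With it in hand, the paper then proves general-$\alpha$ versions of your $\Delta$ and $\mathcal E$ bounds (Lemmata~\ref{PathwiseLem} and~\ref{OnsagerErrorLem}), the first via the observation that the combination $g_{j,x}\partial_\gamma m_j^{(k)}-\partial_{\gamma+\delta_{\{j,x\}}}m_j^{(k)}$ behaves under $\E$ like a pure derivative operator on the remaining factors, and the second by a further induction on $k$. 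Theorem~\ref{deltathrm} is then just the $\alpha=0$, $t=1$ specialization. Your $\mathcal E$-step and the decomposition of $\mathfrak d_{yz}m_l^{(k+1)}$ are correct and appear verbatim in the paper; what is missing from your proposal is the formulation of the connected-component bound that makes the whole scheme go through.
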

To derive Theorem \ref{deltathrm}, a fundamental understanding of the magnitude of derivatives of $m$ with respect to $g$ is required. To this end, we introduce some notation. First we use multiindex notation, i.e. for 
\begin{equation}\label{multiindex1}  \alpha \in \N_0^{\mathfrak p},\quad \mbox{ where } \quad  \mathfrak p \defi \mathfrak p_N:=\{\{i,j\}:1\le i<j\le N\}\end{equation}
we use the shorthands
\begin{equation}\label{multiindex2}
	|\alpha| := \sum_ {x\in \mathfrak p}\alpha_x, \quad \alpha! := \prod_{x\in \mathfrak p} \alpha_{x}! \quad \mbox{ and } \quad \partial_\alpha  := \prod_{x\in\mathfrak p} (  {\mathfrak d}_{x})^{\alpha_x}\,,
\end{equation}
also recalling the convention ${\mathfrak d}_{ij} = \frac{d}{d g_{ij}}$.   Additionally, for any $e\in\mathfrak p$ we write $\delta_e\in \N_0^{\mathfrak p}$ to refer to the vector with a $1$ in the $e$-coordinate and otherwise zeros. 
Finally, we associate to each multiindex $\alpha \in \N_0^{\mathfrak p}$ the undirected multigraph on $V= \{i: \exists j: \alpha_{\{i,j\}}>0\}$ with $\alpha_{\{i,j\}}$ edges between vertices $i$ and $j$. This allows us to use $\alpha$ as a multiindex and its associated multigraph interchangibly. Consequently $i\in \alpha$ means $\exists j: \alpha_{\{i,j\}}>0$ and $\{i,j\}\in \alpha$ means $\alpha_{\{i,j\}}>0$. 
The following key observation is proved in Section~\ref{derivcontrolsec}.
\begin{theorem} \label{derivcontrolthrm} For $\beta\in \R$, $t,k \in \N$ there exists $C \defi C_{|\alpha|,t,k,\beta}>0$,  such that 
	\begin{equation}\label{claim} \E\left[ \left(\sqrt{N}^{|\alpha|+\mathcal{C}_\alpha(p)} \partial_\alpha m_p^{(k)}\right)^{2t}\right]\leq C\end{equation} 
	for all $\alpha\in \N_0^{\mathfrak p}$, $N\in \N$, $p\leq N$, $h\in \R$ and where $\mathcal{C}_\alpha(p)$ denotes the number of  connected components in $\alpha$ that do not contain $p$.
\end{theorem}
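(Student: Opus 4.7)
The plan is to argue by strong induction on $k$, establishing the claim uniformly over all multiindices $\alpha \in \N_0^{\mathfrak p}$, all sites $p \leq N$, and all $N \in \N$. For the base case $k=1$, assumption A1) forces $\bm^{(1)}$ to be independent of $\bf G$, so $\partial_\alpha m_p^{(1)} = 0$ whenever $|\alpha| \geq 1$, while for $|\alpha| = 0$ the bound reduces to $\E[(m_p^{(1)})^{2t}] \leq 1$, which is immediate from $\bm^{(1)} \in [-1,1]^N$ (noting $\mathcal{C}_0(p) = 0$).

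For the inductive step, I would expand $m_p^{(k+1)} = \tanh(h + Y_p^{(k)})$ via Faà di Bruno's formula, reducing the problem to bounding products $\prod_{B \in \pi} \partial_B Y_p^{(k)}$ over set-partitions $\pi$ of the multiset $\alpha$; boundedness of each $\tanh^{(j)}$ absorbs the outer chain-rule coefficients. Each $\partial_B Y_p^{(k)}$ splits via Leibniz into a \emph{pendant} part arising from differentiating the explicit $g_{pj}$-linearity, a \emph{random-sum} part $\frac{\beta}{\sqrt{N}} \sum_j g_{pj}\, \partial_B m_j^{(k)}$, and the Onsager derivative $-\partial_B \On_p^{(k)}$. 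The pendant terms $\frac{\beta}{\sqrt{N}} B_{\{p,j\}} \partial_{B - \delta_{\{p,j\}}} m_j^{(k)}$ are directly handled by the IH, since removing one edge from $B$ loses one unit of edge-count but is compensated by the explicit $1/\sqrt{N}$, matching the target scaling exactly.

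The core of the argument lies in the random-sum-plus-Onsager part. For choice (II), the identity $\partial_B \On_p^{(k)} = \frac{\beta}{\sqrt{N}} \sum_j \partial_{B + \delta_{\{p,j\}}} m_j^{(k)}$ and Gaussian integration by parts yield $\E[g_{pj}\, \partial_B m_j^{(k)}] = \E[\partial_{B + \delta_{\{p,j\}}} m_j^{(k)}]$, so the Onsager correction is exactly the expectation-cancellation of the random sum. To upgrade from expectations to $L^{2t}$ bounds, I would raise the difference to the $2t$-th power and iteratively integrate each factor $g_{pj}$ by parts against the remaining product, reducing to higher-derivative expressions that the IH controls; Gaussian concentration in the $\frac{1}{\sqrt N}\sum_j$ then delivers the $1/\sqrt N$ gains needed to produce the component-count factors. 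For choice (I), $\partial_B \On_p^{(k)}$ unfolds via Leibniz into polynomial combinations of derivatives of $q_N^{(k)}$ and $m_p^{(k-1)}$, all covered by the strong IH, and the analogous IBP cancellation with the random sum yields the same scaling.

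The main obstacle will be the combinatorial bookkeeping of the connected components of $\alpha$. After Faà di Bruno, the component structure of $\alpha$ relative to $p$ must be reconstructed from the per-block contributions of the partition $\pi$, and each Gaussian integration by parts in the bulk step appends an edge $\{p,j\}$ to the underlying graph, potentially merging two components or extending an existing one. One must verify that in every configuration the IH applied after the added edges yields exactly the target $N^{-(|\alpha| + \mathcal{C}_\alpha(p))/2}$ scaling without slack, i.e.\ that each unit of $\mathcal{C}_\alpha(p)$ is accounted for by precisely one Gaussian-IBP step producing an extra $1/\sqrt N$. Once this edge-component accounting is established, standard Hölder estimates propagate the per-block $L^{2t}$ bounds through the Leibniz products and close the induction.
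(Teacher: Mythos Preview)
Your inductive architecture (Fa\`a di Bruno, then split $\partial_\gamma Y_p^{(k-1)}$ into pendant, random-sum, and Onsager pieces, then H\"older to recombine) is exactly the paper's. The one substantive divergence is that you couple the random-sum and Onsager pieces via expectation-cancellation, whereas the paper bounds them \emph{separately}: one lemma shows that $\frac{\beta}{\sqrt N}\sum_j g_{pj}\,\partial_\gamma m_j^{(k-1)}$ \emph{alone} already sits at the target order $\sqrt N^{-(|\gamma|+\mathcal C_\gamma(p))}$ in every $L^{2n}$, and a separate (easier) lemma shows the same for $\partial_\gamma \On_p^{(k-1)}$. No cancellation between them is used or needed here; that finer cancellation is precisely what the paper isolates \emph{later} as the $\Delta,\mathcal E$ estimates of Theorem~\ref{deltathrm}, proved \emph{from} the present theorem rather than en route to it. Your cancellation route can be pushed through for choice~\eqref{On-new}, but for choice~\eqref{On-cla} there is no exact expectation identity, so you would effectively be proving the $\mathcal E$-estimate inside the induction --- doable, but a detour. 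Dropping the cancellation and bounding the two pieces separately is both cleaner and uniform across the two Onsager choices.

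One correction to your bookkeeping intuition: the factor $\mathcal C_\gamma(p)$ does \emph{not} arise from ``one Gaussian-IBP step per component''. It is inherited directly from the induction hypothesis applied to $\partial_{\gamma+\tau} m_j^{(k-1)}$, whose bound already carries $\mathcal C_{\gamma+\tau}(j)$ in the exponent; the graph inequalities $\mathcal C_{\gamma+\tau}(j)\ge \mathcal C_\gamma(p)-(\text{small corrections in }\I_{j\in\gamma})$ then transfer this to the target. The IBP steps serve a different purpose: in the $2n$-th moment expansion of $\sum_j g_{pj}\,\partial_\gamma m_j^{(k-1)}$ one integrates by parts only on those $g_{p,i_l}$ whose index $i_l$ appears \emph{exactly once} among $(i_1,\dots,i_{2n})$, and each such IBP trades an $O(1)$ Gaussian factor for an extra derivative, i.e.\ one extra $1/\sqrt N$ from the IH --- this is what pays for the free summation over that unrepeated index. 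Repeated indices need no IBP because the collision already collapses the sum. Getting this unique-index accounting right, and then checking that the residual $\I_{i_l\in\gamma}$ corrections sum to something bounded, is the actual crux.
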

 In the applications, we only use the case $t=1$. However, the proof of this Theorem is by induction on $k$ and requires all $t$. 
\begin{rem}\label{r:g}
	By taking conditional expectations, the starting values $m^{(0)}$, $m^{(1)}$ can be replaced by random ones that are independent of $g$ (the constants in our results do not depend on $m^{(0)}$, $m^{(1)}$). As the proof shows, Theorem~\ref{derivcontrolthrm} also holds if the starting values $m^{(0)}$, $m^{(1)}$ of the iteration depend on $g$ as long as they satisfy \eqref{claim}. The assumption that $m^{(0)}$ and $m^{(1)}$ are independent of $g$ becomes crucial in the proof of~\eqref{e:qprop-chi} in Theorem~\ref{q-prop}.
\end{rem}

Building on Theorems~\ref{deltathrm} and~\ref{derivcontrolthrm}, we now infer:
\begin{proof}[Proof of Theorem \ref{Stein-TAP} (using Theorems~\ref{deltathrm} and~\ref{derivcontrolthrm})]
		It remains to show that the r.h.s. of \eqref{e:Stein-tr}  is bounded in absolute value by $C/\sqrt{N}$, which we  do for each summand, i.e. each $l$.
	Using Gaussian integration by parts, the representation~\eqref{e:gl} of $v_l$, and the definitions~\eqref{e:def-DeltaE}, we obtain
	\begin{equation} \label{steincomp1}  \E\left[Y_{i_l}^{(k_l)} v_l \right]  =  \E\left[ \mathcal{E}_{i_l}^{(k_l)}  v_l \right] +   \E\left[\sum_{j\leq L}\frac{\beta}{\sqrt{N}} \sum_{a\neq i_l}  m^{(k_l)}_a  (\md_{i_l a} Y^{(k_j)}_{i_j})\partial_{e_j} v_l \right]
		+ \E\left[\sum_{j \leq L}\frac{\beta}{\sqrt{N}} \sum_{a\neq i_l}  m^{(k_l)}_a 	\mathcal{F}_{i_l a}^{j}   \right]
	\end{equation}
	where
	\begin{equation}
		\label{e:F}
		\mathcal F_{i_l a}^{j} = -\int_0^\infty e^{-u } \sqrt{1-e^{-2u}} \EE_Z\left[ ((\md_{i_l a} \Sigma^{1/2}) Z)_j \partial_{e_j+e_l} f( \by e^{-u} + \sqrt{1-e^{-2u}} \Sigma^{1/2}Z) \right]du
	\end{equation}
	and $\by =(Y_{i_l}^{(k_j)})_{j\leq L}$.
	
	By simply using that $\left(\E X\right)^2\leq \E\left(X^2\right)$, we get that the last expectation in~\eqref{steincomp1} has absolute value bounded by the square root of
	\begin{multline*} 
	\frac{\beta^2}{N}\sum_{a,a'\neq i_l} \E\left[\left( \sum_{j\leq L}  m^{(k_l)}_a	\mathcal{F}_{i_l a}^{j} \right) \left( \sum_{j'\leq L}  m^{(k_l)}_{a'}	\mathcal{F}_{i_l {a'}}^{j'} \right)  \right] \\ =\frac{\beta^2}{N}\sum_{a,a'\neq i_l} \E\left[m^{(k_l)}_a m^{(k_l)}_{a'}  \left( \sum_{j\leq L}  	\mathcal{F}_{i_l a}^{j} \right) \left( \sum_{j'\leq L}  	\mathcal{F}_{i_l {a'}}^{j'} \right)  \right], \end{multline*}
	which in turn is bounded by
	\[
		\le \frac{\beta^2}{N}\sum_{a,a'\neq i_l} \E\left[\left( \sum_{j\leq L}  	\mathcal{F}_{i_l a}^{j} \right)^2\right]^{1/2} \E\left[\left( \sum_{j'\leq L}  	\mathcal{F}_{i_l {a'}}^{j'} \right)^2  \right]^{1/2} \le C_{\beta,k,L} N^{-1} \]
	by Cauchy-Schwarz, and the following Lemma, which is shown at the end of this proof.
\begin{lem}\label{l:g-deriv}
	Let $\mathcal F_{i_l a }^{j}$ be defined by~\eqref{e:F}. Then, there exists a constant $C=C_{k,\beta,L}$ such that
	\[\E\left[\left(\sum_{j=1}^L\mathcal F_{i_l, a}^{j}\right)^2\right]\le C N^{-2}.\]
\end{lem}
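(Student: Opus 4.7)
The plan is to reduce the claim to the derivative estimates of Theorem~\ref{derivcontrolthrm}. The main obstacle is that the integrand in~\eqref{e:F} contains $\md_{i_l a}\Sigma^{1/2}$, but $\Sigma^{1/2}$ need not be differentiable in $g$: the matrix $\Sigma$ has rank at most $L$ and can easily become singular (for instance when two iterates $m^{(k)},m^{(k')}$ align), and the symmetric matrix square root is not Lipschitz near the singular locus. I would sidestep this by exploiting that $v_l(y)$ depends on $g$ only through the Gaussian law $\mathcal N(0,\Sigma)$, so \emph{any} smooth $A$ with $AA^T=\Sigma$ yields the same $v_l$. Concretely, let $\{p_1,\ldots,p_M\}$ be the distinct values among $\{i_1,\ldots,i_L\}$, and set
\[ A \defi \frac{\beta}{\sqrt N}\tilde M^T,\qquad \tilde M_{(a,p),\,l}\defi m_a^{(k_l)}\I\{p=i_l\},\qquad a\le N,\ p\in\{p_1,\ldots,p_M\}.\]
Then $A$ is $L\times NM$ and a direct computation gives $(AA^T)_{l,l'}=\beta^2 q_N^{(k_l,k_{l'})}\I\{i_l=i_{l'}\}=\Sigma(l,l')$. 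Since $\EE_Z[\varphi(\Sigma^{1/2}Z)]=\EE_{Z'}[\varphi(AZ')]$ for every integrable $\varphi$, replacing $\Sigma^{1/2}$ by $A$ inside $v_l$ produces the same function of $g$, hence the same derivative. Applying this to~\eqref{e:F},
\[\sum_{j=1}^L\mathcal F_{i_l a}^j=-\int_0^\infty e^{-u}\sqrt{1-e^{-2u}}\,\EE_{Z'}\!\left[\sum_{j=1}^L\partial_{e_j+e_l}f(\cdot)\,((\md_{i_l a}A)Z')_j\right]du,\]
where the ill-defined $\md_{i_l a}\Sigma^{1/2}$ is replaced by the manifestly smooth $\md_{i_l a}A$.

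With this substitution, the remainder is routine. Using $|\partial_{e_j+e_l}f|_\infty\le 1$ (since $f\in\mathcal H^2$) and $C_1\defi\int_0^\infty e^{-u}\sqrt{1-e^{-2u}}\,du<\infty$, I would pull these out of the integral and apply the Cauchy--Schwarz inequality, first on the sum over $j$ and then inside $\EE_{Z'}$, to obtain
\[\bigg(\sum_{j=1}^L\mathcal F^j_{i_l a}\bigg)^{\!2}\le L\,C_1^2\sum_{j=1}^L\EE_{Z'}[((\md_{i_l a}A)Z')_j^2]=L\,C_1^2\,\|\md_{i_l a}A\|_F^2.\]

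The last step is to bound $\E\|\md_{i_l a}A\|_F^2$. Directly from the definition of $A$,
\[\|\md_{i_l a}A\|_F^2=\frac{\beta^2}{N}\sum_{l'=1}^L\sum_{a'=1}^N(\md_{i_l a}m_{a'}^{(k_{l'})})^2.\]
Apply Theorem~\ref{derivcontrolthrm} with $t=1$ and the single-edge multi-index $\alpha=\delta_{\{i_l,a\}}$ (so $|\alpha|=1$ and its associated graph has one connected component): for the two indices $a'\in\{i_l,a\}$ one has $\mathcal C_\alpha(a')=0$ and $\E[(\md_{i_l a}m_{a'}^{(k_{l'})})^2]\le C/N$; for the remaining $a'\notin\{i_l,a\}$ one has $\mathcal C_\alpha(a')=1$ and $\E[(\md_{i_l a}m_{a'}^{(k_{l'})})^2]\le C/N^2$. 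Summing yields
\[\E\|\md_{i_l a}A\|_F^2\le\frac{\beta^2 L}{N}\left(\frac{2C}{N}+\frac{(N-2)C}{N^2}\right)=O(N^{-2}),\]
which, combined with the previous display, gives the claim.
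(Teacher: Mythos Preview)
Your proof is correct and takes a genuinely different route from the paper's. The paper keeps the symmetric square root $\Sigma^{1/2}$ throughout: it performs one more Gaussian integration by parts in $Z$ to produce an extra factor of $\Sigma^{1/2}$, then uses the trace identity
\[
\Sigma^{1/2}\,\md_{i_l a}\Sigma^{1/2}+(\md_{i_l a}\Sigma^{1/2})\,\Sigma^{1/2}=\md_{i_l a}\Sigma
\]
together with cyclicity and symmetry of the trace to replace the problematic $\md_{i_l a}\Sigma^{1/2}$ by $\md_{i_l a}\Sigma$, and only then applies Theorem~\ref{derivcontrolthrm} to bound $\E[(\md_{i_l a}\Sigma_{rs})^2]$. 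Your approach instead replaces $\Sigma^{1/2}$ by the explicit rectangular factorization $A=\beta N^{-1/2}\tilde M^T$ with $AA^T=\Sigma$, observes that $v_l$ depends on $g$ only through the law $\mathcal N(0,\Sigma)$, and differentiates the $A$-representation directly. This buys two things: it cleanly resolves the differentiability issue of $\Sigma^{1/2}$ at the singular locus, which the paper's manipulation tacitly assumes away; and it uses only the second derivatives of $f$ (bounded by $1$ for $f\in\mathcal H^2$), whereas the paper's extra integration by parts invokes the third derivatives. The paper's route has the virtue of being intrinsic (no auxiliary factorization), and its trace trick is elegant; your route is more elementary and, in this respect, more rigorous.
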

We thus conclude that
	\begin{equation} \label{e:EF} 
	\left| \E\left[\sum_{j \leq L}\frac{\beta}{\sqrt{N}} \sum_{a\neq i_l}  m^{(k_l)}_a 	\mathcal{F}_{i_l a}^{j} \right] \right| \leq C_{\beta,k,L} N^{-1/2}\end{equation} 
	
	To compute the second summand in~\eqref{steincomp1}, we consider derivatives of $Y$, which are given by  
	\begin{equation}\label{Yderiv} 
		\md_{yz} Y_{x}^{(k)} = \frac{\beta}{\sqrt{N}}\sum_{b\neq x} \delta_{\{b,x\}\{y,z\}} m_b^{(k)}+ \Delta^{(k)}_{x;y,z}.\end{equation} 
	Hence we have 
	\begin{equation}\bea 
		\label{covtermcomp} \frac{\beta}{\sqrt{N}} \sum_{a\neq i_l}  m^{(k_l)}_a \md_{i_l a} Y_{i_j}^{(k_j)} &= \frac{\beta^2}{N} \sum_{a\neq i_l,b\neq i_j} \delta_{\{b,i_j\}\{i_l,a\}} m^{(k_l)}_a   m_b^{(k_j)}+ \frac{\beta}{\sqrt{N}} \sum_{a\neq i_l} m^{(k_l)}_a\Delta^{(k_j)}_{i_j;i_l,a}  \\
		& = \beta^2 \delta_{i_l i_j} q^{(k_l,k_j)}_N  + \frac{\beta}{\sqrt{N}} \sum_{a\neq i_l} m^{(k_l)}_a\Delta^{(k_j)}_{i_j;i_l,a}+ O(N^{-1}),
		\eea\end{equation}
	where the $O(N^{-1})$ terms is only due to a difference of form $\pm N^{-1} \beta^2 m_{.}^{(.)}m_{.}^{(.)} $. Plugging \eqref{covtermcomp} into \eqref{steincomp1} and this into  the r.h.s.\ of~\eqref{e:Stein-tr} we see that the $\beta^2 \delta_{i_l i_j} q^{(k_l,k_j)}_N$ term in \eqref{covtermcomp} cancels with the first part of each summand on the r.h.s of \eqref{e:Stein-tr}. Hence up error of size at most $C_{\beta,k,L}/\sqrt{N}$ by \eqref{e:EF},  the $l$-th summand on the r.h.s.\ of~\eqref{e:Stein-tr} equals    
	\begin{equation}\label{e:p:sumE}
		\E[ \mathcal{E}_{i_l}^{(k_l)} v_l]+ \sum_{l'=1}^L \E\left[ \frac{\beta}{\sqrt{N}}\sum_{a\neq i_l} m_{a}^{(k_l)} \Delta_{i_{l'};i_l,a}^{(k_{l'})} \partial_{e_{l'}} v_l\right].
	\end{equation}
 Since $|v_l|_\infty \leq 1$ by \eqref{steinest} and using Jensen or Cauchy-Schwarz the first expectation is at most $ C_{\beta,k} N^{-1/2}$ by Theorem~\ref{deltathrm}. Similarly estimating the second expectation using $|m|,|\partial_{e_l'}v_l|_\infty \leq1$ we obtain 
	$$  |\E\left[ \frac{\beta}{\sqrt{N}}\sum_{a\neq i_l} m_{a}^{(k_l)} \Delta_{i_{l'};i_l,a}^{(k_{l'})} \partial_{e_{l'}} v_l\right]|  \leq \beta \sqrt{N} \max_{a\neq i_l} \E[| \Delta_{i_{l'};i_l,a}^{(k_{l'})}|]  \le C_{\beta,k}   \sqrt{N} \max_{a\neq i_l} \sqrt{\E\left[ \left(\Delta_{i_{l'};i_l,a}^{(k_{l'})}\right)^2\right]}.$$
	Application of Theorem~\ref{deltathrm} shows that the r.h.s. in the last display is bounded by $ C_{\beta,k}N^{-1/2}$. Hence, the r.h.s.\ in~\eqref{e:Stein-tr} is bounded by $C_{\beta,k,L}N^{-1/2}$ as asserted. We now come back to the last missing piece, which is the

\begin{proof}[Proof of Lemma \ref{l:g-deriv}]
	By Gaussian integration by parts,
	\begin{multline}\label{e:g-deriv-p1}	\sum_{j=1}^L\mathcal F_{i_l a}^{j} = -\sum_{j=1}^L\int_0^\infty e^{-u } \sqrt{1-e^{-2u}} \EE_Z\left[ ((\md_{i_l a} \Sigma^{1/2}) Z)_j \partial_{e_j+e_l} f( \by e^{-u} + \sqrt{1-e^{-2u}} \Sigma^{1/2}Z) \right]du\\
		=-\int_0^\infty e^{-u } (1-e^{-2u}) \\
		.\sum_{j,p,t=1}^L \EE_Z\left[ ((\md_{i_l a} \Sigma^{1/2}))_{jp} (\Sigma^{1/2})_{tp} \partial_{e_j+e_l+e_t} f( \by e^{-u} + \sqrt{1-e^{-2u}} \Sigma^{1/2}Z) \right]du.
	\end{multline}
	As $\Sigma^{1/2}$ is symmetric we have $\md_{i_l a} \Sigma=\md_{i_l a} (\Sigma^{1/2}  \Sigma^{1/2}) = \Sigma^{1/2}\md_{i_l a}\Sigma^{1/2} + (\md_{i_l a}\Sigma^{1/2})\Sigma^{1/2}$, the sum in the last line equals
	\begin{align*}
		S(u):&= \EE_Z\left[ {\rm tr}\left((\md_{i_l a} \Sigma^{1/2}) \Sigma^{1/2} (\nabla^2 \partial_{e_l} f)( \by e^{-u} + \sqrt{1-e^{-2u}} \Sigma^{1/2}Z) \right)\right]\\
		&= \EE_Z\left[ {\rm tr}\left( \Sigma^{1/2} (\md_{i_l a} \Sigma^{1/2}) (\nabla^2 \partial_{e_l} f)( \by e^{-u} + \sqrt{1-e^{-2u}} \Sigma^{1/2}Z) \right)\right] \\
		&= \EE_Z\left[ {\rm tr}\left(\tfrac12 (\md_{i_l a} \Sigma)  (\nabla^2 \partial_{e_l} f)( \by e^{-u} + \sqrt{1-e^{-2u}} \Sigma^{1/2}Z) \right)\right].
		\end{align*}
The middle equality due to the trace being invariant under transposition as well as cyclic permutation of products.  
	By Cauchy-Schwarz, the expression in the last display is less than
	\[S(u) \leq  \tfrac12 \EE_Z\left[ \sqrt{{\rm tr}\left( (\md_{i_l a} \Sigma)^2\right)  {\rm tr}\left((\nabla^2 \partial_{e_l} f)^2( \by e^{-u} + \sqrt{1-e^{-2u}} \Sigma^{1/2}Z) \right)}\right].\]
As $|\partial_{e_l+e_r+e_s} f|_\infty\le 1$, it follows that
	\[
	\E\left[S(u)^2\right]\le L^2 \E\left(  {\rm tr}\left( (\md_{i_l a} \Sigma)^2\right)\right)=L^2 \sum_{r=1}^L \sum_{s=1}^L \E\left( (\md_{i_l a} \Sigma_{rs})^2\right).
	\]
	By~\eqref{covarianza},
	\[\left(\md_{i_l a} \Sigma_{rs}\right)^2 \leq \left(\md_{i_l a}  \frac{\beta^2} N \sum_{i=1}^N m^{(k_r)}_i m^{(k_s)}_i\right)^2= \left(\frac{\beta^2} N \sum_{i=1}^N m^{(k_r)}_i \md_{i_l a} m^{(k_s)}_i
	+ \frac{\beta^2} N \sum_{i=1}^N m^{(k_s)}_i \md_{i_l a} m^{(k_r)}_i\right)^2.\]
	Using the fact that $(a+b)^2\leq 2(a^2+b^2)$, we obtain
	\begin{multline*} \E\left[\left( \md_{i_l a} \Sigma_{rs}\right)^2\right] \le 2\frac{\beta^4}{N^2}\sum_{i,j=1}^N \E\left[m_i^{(k_r)}(\md_{i_l a} m^{(k_s)}_i)m_j^{(k_r)}(\md_{i_l a} m^{(k_s)}_j)\right] \\ 
	+\E\left[m_i^{(k_s)}(\md_{i_l a} m^{(k_r)}_i) m_j^{(k_s)}(\md_{i_l a} m^{(k_r)}_j)\right].\end{multline*}
	By Cauchy-Schwarz, the first term on the r.h.s. of the latter is smaller than 
	\[\le 2\frac{\beta^4}{N^2} \sum_{i,j=1}^N \E\left[\left(\md_{i_l a} m^{(k_s)}_i\right)^2\right]^{1/2}
	\E\left[\left(\md_{i_l a} m^{(k_s)}_j\right)^2\right]^{1/2}\le C_{\beta,k} N^{-2},\]
	and exactly the same estimate holds for the second term. The last inequality follows as the second moments in the last display are $\le C_{\beta,k} N^{-1- \I_{i\notin\{i_l,a\}}}$ and $\le C_{\beta,k} N^{-1- \I_{j\notin\{i_l,a'\}}}$ by Theorem~\ref{derivcontrolthrm}. Plugging these estimates into~\eqref{e:g-deriv-p1} and using Cauchy-Schwarz inequality, we obtain
	\begin{multline*}
		\E\left[\left(\sum_{j=1}^L\mathcal F_{i_l, a}^{j}\right)^2\right] = \iint_0^\infty -e^{-u -u'} (1-e^{-2u}) (1-e^{-2u'}) \E\left[S(u)^2\right]^{1/2} \E\left[S(u')^2\right]^{1/2} du\, du'\\
		\le C_{\beta,k,L} N^{-2}
		\end{multline*}
	as asserted.
\end{proof}

\end{proof}

\subsection{Controlling derivatives / Proof of Theorem \ref{derivcontrolthrm}}\label{derivcontrolsec}

 In the proof, we use the following consequence of the H\"older inequality which can be shown by induction: For any $n\in\N$ and random variables $X_1,\ldots,X_n$ in $L_n(\mathbb P)$, we have
 \begin{equation}\label{e:Holder}
 	\E\left[X_1\cdots X_n\right]^{n}\le \E\left[|X_1|^{n}\right]\cdots\E\left[|X_n|^{n}\right].
 \end{equation} 

\begin{proof}[Proof of Theorem \ref{derivcontrolthrm}]  We show \eqref{claim} by induction on $k$. The base clauses, $k=1$ for choice   (\ref{On-new}) and $k=0,1$ for (\ref{On-cla}), hold,  since the starting values are independent of $g$. From here on we may assume that the assertion is correct up to $k-1$, but before this will be helpful some preparation and computation is needed. To this end we write $A\preceq B$ if $ A\leq C B$ for a $C>0$ that depends on $\beta, t, k, |\alpha|$ only and denote by $\tanh^{(s)}$ the $s$-th derivative of $\tanh$. By the generalized Faa di Bruno formula (see e.g.\ \cite[Corollary 2.10]{CS}, \cite{HCF}), for each $\alpha\in\N_0^{\mathfrak p}$ with $|\alpha|\ge 1$,
\begin{equation}\label{FaaDiBruno}
\partial_{\alpha} m^{(k)}_p
	= \alpha !\sum_{(\sigma,s)\in\mathcal P(\alpha)}\tanh^{(|s|)}(h^{(k)}_p) \prod_{\ell} \frac{1}{s_\ell!} \left(\frac{\partial_{\sigma_\ell} h^{(k)}_p }{ \sigma_\ell!}\right)^{s_\ell},
\end{equation}
where 
$$\mathcal P(\alpha):= \left\{ (\sigma,s) \in \bigcup_{u=1}^{|\alpha|} \left((\N_0^{\mathfrak p})^u\times \N^u\right) :\sum_{i} s_i\sigma_i = \alpha, \quad 0< \sigma_1<\ldots< \sigma_u \right\}.$$
Note that "$<$" in this definition can be chosen to be any order  on $\N_0^{\mathfrak p}$ with minimal element $0$. From \eqref{FaaDiBruno} we immediately obtain the representation  
$$		\E\left[\left( \partial_\alpha m^{(k)}_p \right)^{2t}\right]=\sum_{(\sigma^{(b)},s^{(b)})_b\in \mathcal P(\alpha)^{2t}}
		\E\left[\prod_{b=1}^{2t}\alpha !\tanh^{(|s^{(b)}|)}(h^{(k)}) \prod_{\ell} \frac{1}{s^{(b)}_\ell!} \left(\frac{\partial_{\sigma^{(b)}_\ell} h^{(k)}_p }{ \sigma^{(b)}_\ell!}\right)^{s^{(b)}_\ell}\right].$$
Roughly estimating all terms independent of $N$ and using that derivatives of $\tanh$ are bounded (since they are polynomials in $\tanh$) we obtain   

\begin{align*} \preceq& \sum_{(\sigma^{(b)},s^{(b)})_b\in \mathcal P(\alpha)^{2t}}\E\left[\prod_{b=1}^{2t} \prod_{\ell}  \left|\partial_{\sigma^{(b)}_\ell} h^{(k)}_p \right|^{s^{(b)}_\ell}\right] \\ \leq& \sum_{(\sigma^{(b)},s^{(b)})_b\in \mathcal P(\alpha)^{2t}}\prod_{b=1}^{2t} \prod_{\ell} \E\left[ \left(\partial_{\sigma^{(b)}_\ell} h^{(k)}_p \right)^{2t \len(s^{(b)})s^{(b)}_\ell}\right]^{\frac{1}{2t \len(s^{(b)})}}, \end{align*}
the last step by the H\"older inequality~\eqref{e:Holder} and $\len$ mapping a vector to its dimension. Finally we estimate by worst case scenario using that the number of summands does not depend on $N$ to obtain 
\begin{equation}\label{maxest} \preceq  \max_{(\sigma,s)\in \mathcal P(\alpha)}  \prod_{\ell} \E\left[ \left(\partial_{\sigma_\ell} h^{(k)}_p \right)^{2t \len(s) s_\ell}\right]^{\frac{1}{\len(s)}}.\end{equation}
Computing derivatives of $h$ from its definition yields 
\begin{equation} \label{hderiv}\partial_\gamma h_p^{(k)}  = N^{-1/2}\sum_i g_{p,i} \partial_\gamma m_i^{(k-1)}+ N^{-1/2} \sum_i {\I}_{\gamma_{\{p,i\}}>0} \partial_{\gamma-\delta_{\{p,i\}}}m_i^{(k-1)}+ \partial_\gamma \On^{(k-1)}_p\end{equation}
for any $\gamma \in \N_0^{\mathfrak p}$ and writing $\delta_{\{p,i\}}$ for the vector that has all zeros except a single one in the $\{p,i\}$ coordinate. We use $\{$, $\}$ here to indicate that $\{p,i\}$ and $\{i,p\}$ refer to the same coordinate. Also note that by slight abuse of notation if $i=p$ we set the indicator to zero. Combining \eqref{hderiv} with the elementary estimate $(a+b+c)^{2n} \leq 3^{2n} (a^{2n}+b^{2n}+c^{2n})$ we obtain for $m\in \N_0, n\preceq 1$
\begin{multline*}
	\E\left[ \left(\partial_{\gamma} h^{(k)}_p \right)^{2n}\right] \preceq  N^{-n}\E\left[\left(\sum_i g_{p,i} \partial_\gamma m_i^{(k-1)}\right)^{2n}\right] \\+ N^{-n} \E\left[\left(\sum_i {\I}_{\gamma_{\{p,i\}}>0} \partial_{\gamma- \delta_{\{p,i\}}}m_i^{(k-1)}\right)^{2n}\right]+ \E\left[\left(\partial_\gamma \On^{(k-1)}_p\right)^{2n}\right].
\end{multline*}
Each of these expectations will be handled separately by the the following Lemmata: 
\begin{lem}\label{lem2} Assuming that the assertion of Theorem \ref{derivcontrolthrm} holds up to $k-1$ we have for $n,|\gamma|\preceq 1$ that  
$$\E\left[\left(\sum_i g_{p,i} \partial_\gamma m_i^{(k-1)}\right)^{2n}\right] \preceq N^{n (1-|\gamma|-\mathcal{C}_\gamma(p) )}$$
 \end{lem}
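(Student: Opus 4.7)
The plan is to expand the $2n$-th power, apply iterated Gaussian integration by parts to all $2n$ factors $g_{p,i_j}$, and then invoke the inductive hypothesis at step $k-1$ through the Hölder inequality~\eqref{e:Holder}. Writing
\[\E\left[\left(\sum_{i\neq p} g_{p,i}\, \partial_\gamma m_i^{(k-1)}\right)^{2n}\right]=\sum_{i_1,\ldots,i_{2n}}\E\left[\prod_{j=1}^{2n} g_{p,i_j}\,\partial_\gamma m_{i_j}^{(k-1)}\right],\]
each Gaussian factor $g_{p,i_j}$ either pairs with another $g_{p,i_{j'}}$, producing the Kronecker $\delta_{i_j,i_{j'}}$ (since $\Cov(g_{p,i},g_{p,i'})=\delta_{i,i'}$), or acts as $\partial_{g_{p,i_j}}$ on some factor $\partial_\gamma m_{i_l}^{(k-1)}$, yielding $\partial_{\gamma+\delta_{\{p,i_j\}}} m_{i_l}^{(k-1)}$. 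The expectation thus decomposes into a finite sum, indexed by a partial matching $M$ of $\{1,\ldots,2n\}$ with $r$ pairs and an assignment $D:\{1,\ldots,2n\}\setminus\bigcup M\to\{1,\ldots,2n\}$ of the remaining indices to positions, of terms
\[\left(\prod_{\{a,b\}\in M}\delta_{i_a,i_b}\right)\E\left[\prod_{l=1}^{2n}\partial_{\gamma^{(l)}} m_{i_l}^{(k-1)}\right]\qquad\text{with }\gamma^{(l)}:=\gamma+\sum_{j\in D^{-1}(l)}\delta_{\{p,i_j\}}.\]

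To bound each such term I would apply the Hölder inequality with exponent $2n$ and the inductive hypothesis at step $k-1$, giving
\[\E\left[\prod_{l=1}^{2n}\partial_{\gamma^{(l)}} m_{i_l}^{(k-1)}\right]\preceq \prod_{l=1}^{2n} N^{-(|\gamma^{(l)}|+\mathcal{C}_{\gamma^{(l)}}(i_l))/2}.\]
Since $|\gamma^{(l)}|=|\gamma|+|D^{-1}(l)|$ and $\sum_l |D^{-1}(l)|=2n-2r$, the $|\gamma^{(l)}|$-contributions multiply to $N^{-n|\gamma|-(n-r)}$. The matching $M$ imposes $r$ equalities among the $i_j$, so in the \emph{generic} regime---where all remaining $2n-r$ free indices are distinct and disjoint from $\{p\}\cup V(\gamma)$---the number of valid tuples is of order $N^{2n-r}$.

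The crucial observation is that, in the generic regime, $\mathcal{C}_{\gamma^{(l)}}(i_l)\geq\mathcal{C}_\gamma(p)$ for every $l$. Indeed, all edges added to build $\gamma^{(l)}$ share the vertex $p$, so they can only enlarge the component containing $p$ (or create a new one if $p\notin V(\gamma)$) by absorbing finitely many generic vertices; no component of $\gamma$ disjoint from $p$ is affected, and hence the number of components of $\gamma^{(l)}$ not containing $i_l$ is at least $\mathcal{C}_\gamma(p)$. Multiplying the three estimates yields
\[N^{2n-r}\cdot N^{-n|\gamma|-(n-r)}\cdot N^{-n\mathcal{C}_\gamma(p)}=N^{n(1-|\gamma|-\mathcal{C}_\gamma(p))},\]
and since the number of configurations $(M,D)$ is bounded by a constant depending only on $n$, summation preserves the exponent.

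The main obstacle is the clean treatment of the \emph{degenerate} sub-sums in which some free index $i_l$ coincides with $p$, with a vertex of $\gamma$, or with another free index not paired by $M$. In such sub-sums a factor of $N$ is lost from the reduced number of tuples, but one can simultaneously lose a factor $N^{1/2}$ in the connectivity exponent $\mathcal{C}_{\gamma^{(l)}}(i_l)$, which may drop below $\mathcal{C}_\gamma(p)$. I would verify case by case that the loss in $\mathcal{C}_{\gamma^{(l)}}(i_l)$ is always strictly smaller than the compensation from the reduced index count, so that every degenerate contribution is of order at most $N^{n(1-|\gamma|-\mathcal{C}_\gamma(p))}$; since the number of degenerate patterns is bounded by a constant depending only on $n$ and $|\gamma|$, this finishes the induction.
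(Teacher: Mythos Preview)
Your strategy---expand, perform a full Wick-type iterated integration by parts on all $2n$ Gaussian factors, then apply the inductive bound via H\"older---is sound in outline and does produce the correct exponent in the generic regime. The paper proceeds differently: it integrates by parts \emph{only} those $g_{p,i_l}$ whose index $i_l$ occurs exactly once among $i_1,\ldots,i_{2n}$ (the set $A_i$), and then removes the remaining $g$-product by a single Cauchy--Schwarz step, its moment being $\preceq 1$. This replaces your partial matching $M$ and assignment $D$ by the much lighter data of a Leibniz distribution of $\prod_{l\in A_i}\md_{p i_l}$ over the $2n$ factors, after which the entire degeneracy bookkeeping collapses into one partition sum (the display~\eqref{e:exp}) that is seen to be $\preceq 1$ in two lines.

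Where your sketch is genuinely incomplete is the handling of the degenerate sub-sums, which you defer to an unperformed case analysis. Moreover, the claim that the connectivity loss is ``always strictly smaller than the compensation'' is not correct as stated: if a matched pair $\{a,b\}\in M$ has its common index in $V(\gamma)$, then both $\mathcal C_{\gamma^{(a)}}(i_a)$ and $\mathcal C_{\gamma^{(b)}}(i_b)$ may drop by one, losing a full factor $N$ in the bound while only a single factor $N$ is recovered from the index restriction. The contribution is therefore of the \emph{same} order, not lower---which still suffices since there are $\preceq 1$ such configurations, but it shows that the accounting must be done carefully rather than waved through. The paper's partial-IBP device avoids this entirely: non-unique indices never generate derivatives, so the graph estimate~\eqref{graphest2} together with the block decomposition~\eqref{e:exp} treats every coincidence pattern uniformly without case distinctions.
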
 
\begin{lem}\label{lem3}Assuming that the assertion of Theorem \ref{derivcontrolthrm} holds up to $k-1$ we have for $n,|\gamma|\preceq 1$ that 
$$  \E\left[\left(\sum_i {\I}_{\gamma_{\{p,i\}}>0} \partial_{\gamma- \delta_{\{p,i\}}}m_i^{(k-1)}\right)^{2n}\right]\preceq N^{n (1-|\gamma|-\mathcal{C}_\gamma(p) ) }$$
 \end{lem}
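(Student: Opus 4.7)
The plan is to reduce the $2n$-th moment bound to individual moment bounds via a Jensen-type inequality and then apply the induction hypothesis at level $k-1$, with the remaining bookkeeping collapsing to a small graph-theoretic inequality.

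First, observe that the sum contains at most $|\gamma|$ non-zero terms, since $\I_{\gamma_{\{p,i\}}>0}$ forces $\{p,i\}$ to be an edge of $\gamma$ and each such $i$ accounts for at least one edge. As $|\gamma|\preceq 1$, the power-mean inequality $(\sum_{\ell=1}^L x_\ell)^{2n}\le L^{2n-1}\sum_{\ell=1}^L x_\ell^{2n}$ yields
\[\E\left[\left(\sum_i \I_{\gamma_{\{p,i\}}>0}\, \partial_{\gamma- \delta_{\{p,i\}}}m_i^{(k-1)}\right)^{2n}\right]\preceq \max_{i:\gamma_{\{p,i\}}>0}\E\left[\left(\partial_{\gamma- \delta_{\{p,i\}}}m_i^{(k-1)}\right)^{2n}\right].\]

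Second, applying the induction hypothesis of Theorem~\ref{derivcontrolthrm} at level $k-1$ to the multiindex $\gamma-\delta_{\{p,i\}}$ (whose total weight is $|\gamma|-1$) evaluated at $i$, each such moment is $\preceq N^{-n(|\gamma|-1+\mathcal{C}_{\gamma-\delta_{\{p,i\}}}(i))}$. Matching this with the target bound $N^{n(1-|\gamma|-\mathcal{C}_\gamma(p))}$, everything reduces to the combinatorial inequality
\[\mathcal{C}_{\gamma-\delta_{\{p,i\}}}(i)\ge \mathcal{C}_\gamma(p)\qquad\text{whenever } \gamma_{\{p,i\}}>0.\]

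For this combinatorial step, note that any $i$ with $\gamma_{\{p,i\}}>0$ lies in the same connected component $C$ of the multigraph of $\gamma$ as $p$, so $\mathcal{C}_\gamma(i)=\mathcal{C}_\gamma(p)$. Deleting one copy of the edge $\{p,i\}$ either leaves $C$ connected (because another copy of $\{p,i\}$ remains or the edge lay on a cycle), in which case $\mathcal{C}_{\gamma-\delta_{\{p,i\}}}(i)=\mathcal{C}_\gamma(p)$, or it disconnects $C$ into a piece containing $p$ and a piece containing $i$ (possibly with $i$ dropping from the vertex set altogether if $\{p,i\}$ was its only incident edge), which creates exactly one new component disjoint from $i$ and therefore gives $\mathcal{C}_{\gamma-\delta_{\{p,i\}}}(i)=\mathcal{C}_\gamma(p)+1$. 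Either way the inequality holds.

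The main subtlety is the corner case in which $i$ ceases to be a vertex of the multigraph after deletion; one simply reads the definition of $\mathcal{C}_{\,\cdot\,}(i)$ as counting \emph{all} components of the remaining graph, none of which then contain $i$, and the tally still rises by one. Apart from this bookkeeping the argument is a routine combination of the power-mean inequality and the inductive hypothesis.
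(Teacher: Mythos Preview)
Your proof is correct and follows essentially the same route as the paper's own argument. The paper expands the $2n$-th power and applies the H\"older inequality~\eqref{e:Holder} to arrive at $\bigl(\sum_i \I_{\gamma_{\{p,i\}}>0}\, \E[(\partial_{\gamma-\delta_{\{p,i\}}}m_i^{(k-1)})^{2n}]^{1/2n}\bigr)^{2n}$, whereas you use the power-mean inequality directly; since the number of non-zero summands is $\le |\gamma|\preceq 1$, both reductions are equivalent and lead to the same per-term moment bound. The remaining combinatorial step $\mathcal{C}_{\gamma-\delta_{\{p,i\}}}(i)\ge \mathcal{C}_\gamma(p)$ is exactly what the paper invokes (stated there as $\mathcal{C}_{\gamma-\delta_{\{p,i\}}}(i)\ge \mathcal{C}_\gamma(i)=\mathcal{C}_\gamma(p)$), and your case analysis, including the corner case where $i$ drops from the vertex set, is a correct and slightly more explicit justification of it.
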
 
\begin{lem}\label{lem4}Assuming that the assertion of Theorem \ref{derivcontrolthrm} holds up to $k-1$ we have for $n,|\gamma|\preceq 1$ that 
$$\E\left[\left(\partial_\gamma \On^{(k-1)}_p\right)^{2n}\right]\preceq N^{n(-|\gamma|-\mathcal{C}_\gamma(p))}$$
 \end{lem}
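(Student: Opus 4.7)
The plan is to handle the two choices of Onsager correction \eqref{On-cla} and \eqref{On-new} separately, starting with \eqref{On-new} as it couples directly to the induction hypothesis. For this choice, differentiating gives
\[\partial_\gamma \On^{(k-1)}_p = \frac{\beta}{\sqrt N}\sum_{j} \partial_{\gamma+\delta_{\{p,j\}}} m^{(k-1)}_j,\]
after which I would expand the $2n$-th power, apply H\"older's inequality \eqref{e:Holder}, and invoke the induction hypothesis at level $k-1$ to bound each $(2n)$-th moment factor by $N^{-(|\gamma|+1+\mathcal C_{\gamma+\delta_{\{p,j\}}}(j))/2}$. The crucial step is then a case analysis on the graph $\gamma+\delta_{\{p,j\}}$: when $j \notin V(\gamma)$ (the generic case, accounting for $N - O(1)$ of the summation indices), one checks that $\mathcal C_{\gamma+\delta_{\{p,j\}}}(j) = \mathcal C_\gamma(p)$ irrespective of whether $p$ is a vertex of $\gamma$; when $j \in V(\gamma)$ (only $O(1)$ indices), one obtains the weaker lower bound $\mathcal C_\gamma(p) - 1$, but this deficit is absorbed by the bounded number of summation choices. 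After gathering the prefactors $N^{-n}$ from $(\beta/\sqrt N)^{2n}$ and $N^{2n}$ from the $j_1,\dots,j_{2n}$ sum, the dominant contribution matches $N^{-n(|\gamma|+\mathcal C_\gamma(p))}$, as required.

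For choice \eqref{On-cla}, I would first apply the Leibniz rule,
\[\partial_\gamma \On^{(k-1)}_p = \beta^2 \sum_{\gamma' \le \gamma}\binom{\gamma}{\gamma'} \partial_{\gamma'}(1-q^{(k-1)}_N)\,\partial_{\gamma-\gamma'}m^{(k-2)}_p,\]
and split into the $\gamma' = 0$ and $\gamma' \ne 0$ contributions. The $\gamma' = 0$ summand is handled immediately by the uniform bound $|1-q^{(k-1)}_N| \le 2$ combined with the induction hypothesis at level $k-2$. For $\gamma' \ne 0$, I would expand $\partial_{\gamma'} q^{(k-1)}_N$ via Leibniz a second time into $\frac{1}{N}\sum_i \sum_{\gamma''\le\gamma'}\binom{\gamma'}{\gamma''}(\partial_{\gamma''}m^{(k-1)}_i)(\partial_{\gamma'-\gamma''}m^{(k-1)}_i)$, and control the $(2n)$-th moment of each such sum along the same lines as for \eqref{On-new}: H\"older plus the induction hypothesis (now with $t=2n$) yields a bound of order $N^{-n(|\gamma'|+T(\gamma'))}$, where $T(\gamma')$ denotes the total number of components of $\gamma'$, with the $i \notin V(\gamma')$ indices again providing the leading contribution. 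Cauchy-Schwarz combined with the induction bound on $\partial_{\gamma-\gamma'}m^{(k-2)}_p$ then assigns to the $\gamma'$-summand an overall bound of order $N^{-n(|\gamma|+T(\gamma')+\mathcal C_{\gamma-\gamma'}(p))}$.

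To close the argument for choice \eqref{On-cla}, I plan to invoke the combinatorial identity $T(\gamma') + \mathcal C_{\gamma-\gamma'}(p) \ge \mathcal C_\gamma(p)$, which reduces to two elementary graph-theoretic observations: (i) $V(\gamma) \supseteq V(\gamma-\gamma')$, so $\mathbf 1[p \in V(\gamma)] \ge \mathbf 1[p \in V(\gamma-\gamma')]$; and (ii) $T(\gamma) \le T(\gamma') + T(\gamma-\gamma')$, since the components of the edge-union $\gamma$ arise by merging components of $\gamma'$ and $\gamma-\gamma'$ sharing a vertex, and merging can only reduce the total count. The main obstacle will be precisely this combinatorial bookkeeping of connected-component counts: each single derivative should cost exactly one power of $N^{-1/2}$ in the worst case, and one must verify that the $N^{-1/2}$ factors built into the very definitions of $\On$ absorb the discrepancies that arise when the new edge $\{p,j\}$ connects to a pre-existing vertex (for \eqref{On-new}) or when the Leibniz splitting redistributes edges nontrivially between $\gamma'$ and $\gamma-\gamma'$ (for \eqref{On-cla}).
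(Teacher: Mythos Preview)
Your proposal is correct and follows essentially the same route as the paper. For choice~\eqref{On-new} both arguments expand, apply H\"older~\eqref{e:Holder}, invoke the induction hypothesis at $k-1$, and use the graph estimate $\mathcal C_{\gamma+\delta_{\{p,j\}}}(j)\ge \mathcal C_\gamma(p)-\I_{j\in\gamma}$; for choice~\eqref{On-cla} the paper applies Leibniz to the triple product $(m^{(k-1)}_i)^2 m^{(k-2)}_p$ inside the $i$-sum (yielding a decomposition $\rho+\sigma+\tau=\gamma$) whereas you first split off $\partial_{\gamma'}q^{(k-1)}_N$ and then apply Leibniz to $(m^{(k-1)}_i)^2$, but this is a cosmetic reorganisation---the same H\"older/induction/subadditivity ingredients close both versions, and your combinatorial identity $T(\gamma')+\mathcal C_{\gamma-\gamma'}(p)\ge \mathcal C_\gamma(p)$ is exactly the paper's $\mathcal C_{\rho_l}(p)+\mathcal C_{\sigma_l}(p)+\mathcal C_{\tau_l}(p)\ge \mathcal C_\gamma(p)$ specialised to two pieces.
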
 
Given Lemmata \ref{lem2}, \ref{lem3}, \ref{lem4} we have
$$ \E\left[ \left(\partial_{\gamma} h^{(k)}_p \right)^{2n}\right]  \preceq  N^{-n (|\gamma|+\mathcal{C}_\gamma(p) ) }$$
for $n \preceq 1$. Consequently, by \eqref{maxest} and recalling that
$\sum_{i} s_i\sigma_i = \alpha$,
the claim follows since
$$ \E\left[\left( \partial_\alpha m^{(k)}_p \right)^{2t}\right] \preceq \max_{(\sigma,s)\in \mathcal P(\alpha)}  \prod_{\ell} N^{-t s_\ell ( |\sigma_\ell|+\mathcal{C}_{\sigma_\ell}(p))}=   N^{- t  ( |\alpha|+ \min_{(\sigma,s)\in \mathcal P(\alpha)} \sum_\ell s_\ell\mathcal{C}_{\sigma_\ell}(p))}.$$
This concludes the proof, as $\sum_\ell s_\ell\mathcal{C}_{\sigma_\ell}(p)\geq \mathcal{C}_{\alpha}(p) $ for $(\sigma,s)\in \mathcal P(\alpha)$, where the inequality holds as decomposition into subgraphs can only increase the number of connected components.
\end{proof}
We now return to the proofs for Lemmata  \ref{lem2}, \ref{lem3}, \ref{lem4}.

\begin{proof}[Proof of Lemma \ref{lem2}] 	Since
	\[
	\E\left[ \left( \sum_{i=1}^N g_{p,i} \partial_\gamma m_i^{(k-1)} \right)^{2n}\right]
	=\sum_{i_1,\ldots,i_{2n}=1}^N \E\left[\prod_{l=1}^{2n} g_{p,i_l} \partial_\gamma m^{(k-1)}_{i_l}\right],
	\]
	we define $A_i$ for $i=(i_1,\ldots,i_{2n})$ to be the set or indices with unique $i$'s, i.e the set containing those $l\in\{1,..,2n\}$, s.t. $\#\{l': i_l = i_{l'}\}=1$.
	By Gaussian partial integration with respect to all $g_{p,i_l}$ with $l\in A_i$ we obtain
	\[
	\E\left[\prod_{l=1}^{2n} g_{p,i_l} \partial_\gamma m^{(k-1)}_{i_l}\right]^2 =\E\left[ \left( \prod_{l\in A_i^c} g_{p,i_l}\right)  \left(\prod_{l\in A_i}   {\mathfrak d}_{ p i_l} \right) \prod_{l=1}^{2n } \partial_{\gamma} m^{(k-1)}_{i_{l}}\right]^2,
	\]
	which is by Cauchy-Schwarz at most 
	\begin{equation}\label{e:p-2-CS}
		\leq \E\left[ \left( \prod_{l\in A_i^c} g_{p,i_l}\right)^2\right]  \E\left[\left(\left(\prod_{l\in A_i}  {\mathfrak d}_{ p i_l} \right) \prod_{l=1}^{2n} \partial_{\gamma} m^{(k-1)}_{i_l}\right)^2\right].
	\end{equation}
	By Leibniz rule we have the identity 
	\[
	\left(\prod_{l\in A_i}   {\mathfrak d}_{ p i_l} \right) \prod_{l=1}^{2n} \partial_{\gamma} m^{(k-1)}_{i_l}
	=	\sum_{\tau\in \mathcal{M}(i)} \prod_{l=1}^{2n}\partial_{\gamma +\tau_l } m^{(k-1)}_{i_l},
	\]
	where 
	$$ \mathcal{M}(i):= \left\{ \tau \in(\N_0^{\mathfrak p})^{2n}:   \sum_{l=1}^{2n} \tau_l=\sum_{l\in A_i}\delta_{\{p,i_l\}}\right\}.$$ 
	Since the first term in ~\eqref{e:p-2-CS} is $\preceq 1$ we see that ~\eqref{e:p-2-CS} is bounded by  
	\[ \preceq \sum_{\tau,\tau' \in \mathcal{M}(i)}
		\E\left[\left( \prod_{l=1}^{2n} \partial_{\gamma +\tau_l } m^{(k-1)}_{i_l}\right)\left( \prod_{l'=1}^{2n} \partial_{\gamma +\tau'_{l'} } m^{(k-1)}_{i_l}\right)\right],
	\]
	which in turn, estimating the product by the H\"older inequality~\eqref{e:Holder} and using $|A_i|\preceq 1$, is bounded by 
	\[  \preceq  \left(\max_{\tau\in \mathcal{M}(i)} \prod_{l=1}^{2n} \E\left[\left( \partial_{\gamma +\tau_l } m^{(k-1)}_{i_l} \right)^{4n} \right]^{1/4n} \right)^2.
	\]
	By the assertion of Theorem~\ref{derivcontrolthrm} for $k-1$,
	\[\E\left[\left(  \partial_{\gamma +\tau_l } m^{(k-1)}_{i_l} \right)^{4n} \right]^{1/4n}\preceq \sqrt{N}^{-|\gamma + \tau_l| -\mathcal C_{\gamma + \tau_l}(i_l)}= \sqrt{N}^{-|\gamma|- |\tau_l| -\mathcal C_{\gamma + \tau_l}(i_l)}.
	\]
	Hence we overall obtain the estimate  
	\begin{equation}\label{lem2calc} \E\left[ \left( \sum_{i=1}^N g_{p,i} \partial_\gamma m_i^{(k-1)} \right)^{2n}\right]
	\preceq \sum_{i_1,\ldots,i_{2n}=1}^N   N^{-n|\gamma| - |A_i|/2 -    \min_{\tau\in \mathcal{M}(j)} \sum_{l=1}^{2n}   \mathcal C_{\gamma + \tau_l}(i_l)/2}.\end{equation}

Since $\mathcal C_{\gamma + \tau_l}(i_l)$ and  $\mathcal C_{\gamma + \tau_l}(p)$ are identical up to one component that is possibly not counted, and since all edges in $\tau_l$ contain $p$, we have  
	\begin{equation} \label{graphest}\mathcal C_{\gamma + \tau_l}(i_l) \ge  \mathcal C_{\gamma + \tau_l}(p)- \I_{i_{l}\in\gamma, \tau_l = 0}\ge \mathcal C_\gamma(p)- \I_{i_{l}\in\gamma}- \sum_{l'\in A_i} \I_{i_{l'}\in\gamma, i_{l'}\in\tau_l},\end{equation}
where the second estimate is due to each entry in $\tau_l$ at most removing a single component from the count  by connecting it to $p$.  
	By definition of $A_i$ and $\tau$ note that $\sum_{l=1}^{2n}\I_{i_{l'}\in\tau_l}= 1$ for each $l' \in A_i$. Therefore summing \eqref{graphest} over $l$ yields  
	\begin{equation}\label{graphest2}\sum_{l=1}^{2n}   \mathcal C_{\gamma + \tau_l}(i_l)
	\ge 2n\mathcal C_\gamma(p)- 2\sum_{l\in A_i} \I_{i_{l}\in\gamma}- \sum_{l\in A_i^c} \I_{i_{l}\in\gamma}.\end{equation} 
Now estimating the minimum in \eqref{lem2calc} with \eqref{graphest2} yields 
\[	 \E\left[ \left( \sum_{i=1}^N g_{p,i} \partial_\gamma m_i^{(k-1)} \right)^{2n}\right]
	\preceq  N^{n(1-|\gamma|-\mathcal C_\gamma(p))}\sum_{i_1,\ldots,i_{2n}=1}^N   N^{-n- |A_i|/2 + \sum_{l\in A_i} \I_{i_{l}\in\gamma}+ \sum_{l\in A_i^c} \I_{i_{l}\in\gamma}/2}\]
It remains to show, that the sum on the r.h.s. is $\preceq 1$ for any choice of $\gamma$ with $|\gamma|\preceq 1$. To this end we decompose the sum into two parts. First we sum over all partitions of $\{1,..,2n\}$, $P\in \mathcal{P}_{2n}$ say, and then we decide one by one for each block a number in $\{1,..,N\}$ that was not taken by a previous block. Here we decide first which of the $i$ take the same values, and then we assign these values. This yields  
\begin{equation}\label{e:exp}
	\sum_{i_1,\ldots,i_{2n}=1}^N   N^{-n- |A_i|/2 + \sum_{l\in A_i} \I_{i_{l}\in\gamma}+ \sum_{l\in A_i^c} \I_{i_{l}\in\gamma}/2} =  \sum_{P\in \mathcal{P}_{2n}}\sum_{\substack{1\leq i_{B_1}<\ldots< i_{B_L}\leq N\\ \{B_1,\ldots,B_L\}=P}} \prod_{l=1}^{L}  \sqrt{N}^{ (|B_l|+\I_{|B_l|=1})( \I_{i_{B_l}\in\gamma}-1)}
\end{equation}
since
\begin{multline*} 
-2n- |A_i| + 2\sum_{l\in A_i} \I_{i_{l}\in\gamma}+ \sum_{l\in A_i^c} \I_{i_{l}\in\gamma}
=-2n- |A_i| + \sum_{l\in A_i} \I_{i_{l}\in\gamma}+ \sum_{l=1}^{2n} \I_{i_{l}\in\gamma}
\\=  \sum_{l=1}^L(|B_l|+\I_{|B_l|=1})( \I_{i_{B_l}\in\gamma}-1).\end{multline*}
Estimating by dropping the restriction to distinct values for $i_B$ we obtain 
\[\eqref{e:exp}\leq \sum_{P\in \mathcal{P}_{2n}}\prod_{B\in P}\sum_{i_B=1}^N    \sqrt{N}^{  (|B|+\I_{|B|=1}) (\I_{i_{B}\in\gamma}-1)}. \]
If $i_B \in \gamma$, then the the exponent is zero and there are $|\gamma|$ such choices in the inner sum. If however $i_B \not\in \gamma$, then the exponent is at most $-2$ and therefore the inner sum is at most one. Hence the inner sum is at most $1+|\gamma|$ and using that the number of partitions, factors and $|\gamma|$ are all $\preceq 1$ the claim follows. 
\end{proof}

\begin{proof}[Proof of Lemma \ref{lem3}] Expanding and using the H\"older inequality~\eqref{e:Holder} yields 
\begin{align*}
\E\left[\left(\sum_i {\I}_{\gamma_{\{p,i\}}>0} \partial_{\gamma- \delta_{\{p,i\}}}m_i^{(k-1)}\right)^{2n}\right]&= \sum_{i_1,..,i_{2n}} \E\left[\prod_{l=1}^{2n} {\I}_{\gamma_{\{p,i_l\}}>0}\partial_{\gamma- \delta_{\{p,i_l\}}}m_{i_l}^{(k-1)}\right]\\
&\leq \sum_{i_1,..,i_{2n}}\prod_{l=1}^{2n} {\I}_{\gamma_{\{p,i_l\}}>0} \E\left[ \left(\partial_{\gamma- \delta_{\{p,i_l\}}}m_{i_l}^{(k-1)}\right)^{2n}\right]^{1/2n} \\
&= \left( \sum_i {\I}_{\gamma_{\{p,i\}}>0} \E\left[ \left(\partial_{\gamma- \delta_{\{p,i\}}}m_i^{(k-1)}\right)^{2n}\right]^{1/2n}\right)^{2n}.
\end{align*}
By the assertion of Theorem \ref{derivcontrolthrm} for $k-1$ we estimate 
$$ \preceq  \left( \sum_i {\I}_{\gamma_{\{p,i\}}>0} (\sqrt{N})^{-|\gamma|+1-\mathcal{C}_{\gamma- \delta_{\{p,i\}}}(i)}\right)^{2n}.$$ 
The claim follows, since $|\{i:\gamma_{\{p,i\}}>0\}|\preceq 1$ and $\mathcal{C}_{\gamma- \delta_{\{p,i\}}}(i) \geq \mathcal{C}_{\gamma}(i)=\mathcal{C}_{\gamma}(p)$
if $\gamma_{\{p,i\}}>0$. 
\end{proof}

\begin{proof}[Proof of Lemma \ref{lem4}] 
We start by assuming choice (\ref{On-new}). Then, expanding and applying the H\"older inequality~\eqref{e:Holder} inequality yields 
	\[\E\left[ \left( \partial_\gamma \On^{(k-1)}_p \right)^{2n}\right] \le  \left(\frac{\beta}{\sqrt N}\right)^{2n} \sum_{i_1,\ldots,i_{2n}}\prod_{l=1}^{2n}
	\E\left[   \left(\partial_{\gamma+\delta_{\{i_l,p\}}}m^{(k-1)}_{i_l}\right)^{2n}\right]^{1/2n}.\]
	By the assertion of Theorem~\ref{derivcontrolthrm} for $k-1$ we further estimate 
	\begin{equation}\label{e:p-on-i}
		\preceq \left(\frac{\beta}{\sqrt N}\right)^{2n} \sum_{i_1,\ldots,i_{2n}}\prod_{l=1}^{2n}
	N^{-\tfrac{|\gamma| + 1}{2} -\tfrac{\mathcal C_{\gamma + \delta_{\{i_l,p\}}}(i_l)}{2}}
	= \beta^{2n} N^{-2n - n|\gamma|} \sum_{i_1,\ldots,i_{2n}} N^{-\tfrac12 \sum_{l=1}^{2n}\mathcal C_{\gamma + \delta_{\{i_l,p\}}}(i_l)}.
	\end{equation}
	Using that that 
	\[\mathcal C_{\gamma + \delta_{\{i_l,p\}}}(i_l) = \mathcal C_{\gamma + \delta_{\{i_l,p\}}}(p) \ge \mathcal C_{\gamma}(p) -{\I}_{i_l \in \gamma},\]
	we get
	\begin{equation}\label{e:p-on-i-2}
		\eqref{e:p-on-i} \preceq N^{-2n - n|\gamma|-n \mathcal C_{\gamma}(p) } \sum_{i_1,\ldots,i_{2n}} N^{\frac{1}{2}\sum_{l=1}^{2n}{\I}_{i_l \in \gamma}}=N^{- n|\gamma|-n \mathcal C_{\gamma}(p) } \prod_{l=1}^{2n}\frac{1}{N}\sum_{i_l=1}^{N} \sqrt{N}^{{\I}_{i_l \in \gamma}},
	\end{equation}
	and the claim follows.
Next, we consider choice (\ref{On-cla}).	Using that $(a+b)^{2n}\le 2^{2n} a^{2n} + 2^{2n} b^{2n}$, we obtain
\begin{equation}\label{e:p-On-ng-1}
	\E\left[\left( \partial_{\gamma} \On^{(k-1)}_p \right)^{2n}\right]	\preceq \E\left[\left( \partial_{\gamma} m^{(k-2)}_p \right)^{2n}\right] +N^{-2n}\E\left[\left( \sum_{i=1}^N \partial_{\gamma} \left({m^{(k-1)}_i}^2 m^{(k-2)}_p\right) \right)^{2n}\right].
\end{equation}
By the assertion of Theorem~\ref{derivcontrolthrm} we immediately obtain the desired  $\preceq N^{-n|\gamma| - n\mathcal C_\gamma(p)}$ for the first summand. Analysing the second we observe by the Leibniz rule,
\[\partial_{\gamma} \left({m^{(k-1)}_i}^2 m^{(k-2)}_p\right)
= \sum_{\substack{\rho,\sigma,\tau\in\N_0^{\mathfrak p} \\ \rho+\sigma+\tau = \gamma }}
\binom{\gamma}{\rho\ \sigma\ \tau}
\left(\partial_{\rho} m^{(k-1)}_i \right) \left(\partial_{\sigma} m^{(k-1)}_i \right)
\left(\partial_{\tau} m^{(k-2)}_p \right).
\]
Hence expanding and estimating by the H\"older inequality~\eqref{e:Holder} yields
\begin{multline*}
N^{-2n}\E\left[\left( \sum_{i=1}^N \partial_{\gamma} \left({m^{(k-1)}_i}^2 m^{(k-2)}_p\right) \right)^{2n}\right]\\
\preceq N^{-2n}\sum_{i_1,\ldots,i_{2n}}\sum_{\substack{\rho_l,\sigma_l,\tau_l\in\N_0^{\mathfrak p} \\ \rho_l+\sigma_l+\tau_l = \gamma, l=1,\ldots,2n }}
\prod_{l=1}^{2n}\E\left[
\left(\partial_{\rho_l} m^{(k-1)}_{i_l} \right)^{6n}\right]^{1/6n} \\
 \E\left[\left(\partial_{\sigma_l} m^{(k-1)}_{i_l} \right)^{6n}\right]^{1/6n}\E\left[\left(\partial_{\tau_l} m^{(k-2)}_p \right)^{6n}\right]^{1/6n}.
\end{multline*}
By the assertion of Theorem~\ref{derivcontrolthrm} we further estimate 
\begin{equation} \label{e:p:On-ng}\preceq N^{-2n}\sum_{i_1,\ldots,i_{2n}}\sum_{\substack{\rho_l,\sigma_l,\tau_l\in\N_0^{\mathfrak p} \\ \rho_l+\sigma_l+\tau_l = \gamma, l=1,\ldots,2n }} \prod_{l=1}^{2n} N^{-\tfrac{|\rho_l|}{2} - \tfrac{\mathcal C_{\rho_l}(i_l)}{2}} N^{-\tfrac{|\sigma_l|}{2} - \tfrac{\mathcal C_{\sigma_l}(i_l)}{2}} N^{-\tfrac{|\tau_l|}{2} -\tfrac{\mathcal C_{\tau_l}(p)}{2}}.\end{equation}
By definition of $\mathcal{C}$ 
\[\mathcal C_{\rho_l}(i_l) \ge \mathcal C_{\rho_l}(p) - \I_{i_l \in \gamma},\quad \mathcal C_{\sigma_l}(i_l) \ge \mathcal C_{\sigma_l}(p) - \I_{i_l \in \gamma}\]
as well as 
\[ \mathcal C_{\rho_l}(p) + \mathcal C_{\sigma_l}(p) + \mathcal C_{\tau_l}(p) \ge \mathcal C_{\rho_l + \sigma_l + \tau_l}(p) = \mathcal C_\gamma(p)\]
hold. Using these facts as well as $|\gamma|\preceq 1$ the claim follows:
\[
	\eqref{e:p:On-ng} 	\preceq N^{-n|\gamma| - n\mathcal C_\gamma(p)} \sum_{i_1,\ldots,i_{2n}} N^{\sum_{l=1}^{2n} (\I_{i_l\in\gamma}-1)}=N^{-n|\gamma| - n\mathcal C_\gamma(p)} \prod_{l=1}^{2n}\sum_{i=1}^N  N^{\I_{i_l\in\gamma}-1} \preceq N^{-n|\gamma| - n\mathcal C_\gamma(p)}
\]
\end{proof}

\subsection{Estimating error terms /  Proof of Theorem \ref{deltathrm}}\label{detasec}
Theorem \ref{deltathrm} is a direct consequence of the following Lemmata  by considering $\alpha=0, t=1$ in Lemma \ref{PathwiseLem} and $\alpha=\delta_{\{y,z\}},t=1$ as well as $\alpha=0, t=1$ in Lemma \ref{OnsagerErrorLem}. We write $\mathcal C_{\alpha}$ for the number of connected components in the multigraph associated to $\alpha$. 

\begin{lem}\label{PathwiseLem} For $\beta, h\in \R$, $k,t\in\N$, $N\in\N$, $x, y, z\leq N$ with $y\neq z$, $\alpha\in\N_0^{\mathfrak p}$, there exists $C=C_{k,|\alpha|,\beta,h,t}>0$, s.t.
	\[\E\left[\left(\partial_{\alpha} \left( \Delta^{(k)}_{x;y,z} - \partial_{\delta_{\{y,z\}}} \mathcal E^{(k)}_{x}\right)\right)^{2t}\right]\le C N^{-t\left(|\alpha|+\I_{x\not\in\alpha}+\mathcal C_{\alpha+\delta_{\{y,z\}}}\right)}. \]

\end{lem}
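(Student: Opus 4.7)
The plan begins with a key cancellation: upon substituting the definitions of $\Delta^{(k)}_{x;y,z}$ and $\mathcal{E}^{(k)}_x$, the $\partial_{\delta_{\{y,z\}}}\On^{(k)}_x$ contributions cancel in the difference and yield
\[
D := \Delta^{(k)}_{x;y,z} - \partial_{\delta_{\{y,z\}}}\mathcal{E}^{(k)}_x = \frac{\beta}{\sqrt{N}} \sum_{l\neq x}\bigl[g_{xl}\,\mathfrak{d}_{yz} m_l^{(k)} - \mathfrak{d}_{xl}\mathfrak{d}_{yz} m_l^{(k)}\bigr].
\]
Each summand has the Stein-compensated form $g_{xl}F_l - \mathfrak{d}_{xl}F_l$ with $F_l = \mathfrak{d}_{yz} m_l^{(k)}$ and is therefore mean zero by Gaussian integration by parts. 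This is the structural reason for the smallness of $D$.

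Next, I would apply the Leibniz rule to $\partial_\alpha D$, exploiting that $\partial_\beta g_{xl}$ vanishes unless $\beta\in\{0,\delta_{\{x,l\}}\}$. This splits $\partial_\alpha D = S_\alpha + R_\alpha$, where the main part
\[
S_\alpha := \frac{\beta}{\sqrt{N}}\sum_{l\neq x}\bigl[g_{xl}\,\partial_{\alpha+\delta_{\{y,z\}}}m_l^{(k)} - \partial_{\alpha+\delta_{\{x,l\}}+\delta_{\{y,z\}}}m_l^{(k)}\bigr]
\]
retains the compensated structure, and
\[
R_\alpha := \frac{\beta}{\sqrt{N}}\sum_{l\,:\,\alpha_{\{x,l\}}>0} \alpha_{\{x,l\}}\,\partial_{\alpha-\delta_{\{x,l\}}+\delta_{\{y,z\}}}m_l^{(k)}
\]
is a finite correction. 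I would bound $R_\alpha$ directly by Theorem~\ref{derivcontrolthrm}: only $\leq|\alpha|$ indices contribute, each forcing $\{x,l\}\in\alpha$ and hence $\I_{x\notin\alpha}=0$, while a short graph-theoretic check comparing $\mathcal{C}_{\alpha-\delta_{\{x,l\}}+\delta_{\{y,z\}}}(l)$ to $\mathcal{C}_{\alpha+\delta_{\{y,z\}}}$ yields the claimed exponent.

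For the main term $S_\alpha$, set $F_l := \partial_{\alpha+\delta_{\{y,z\}}} m_l^{(k)}$ and expand
\[
\E[S_\alpha^{2t}] = \frac{\beta^{2t}}{N^t}\sum_{l_1,\ldots,l_{2t}\neq x}\E\Bigl[\prod_{j=1}^{2t}\bigl(g_{xl_j}F_{l_j} - \mathfrak{d}_{xl_j}F_{l_j}\bigr)\Bigr].
\]
I would then iteratively apply Gaussian integration by parts on each $g_{xl_j}$, relying on the crucial identity
\[
\E\bigl[(g_{xl_j}F_{l_j} - \mathfrak{d}_{xl_j}F_{l_j})\,G\bigr] = \E\bigl[F_{l_j}\,\mathfrak{d}_{xl_j}G\bigr],
\]
so that each such move converts a compensated factor into a $\mathfrak{d}_{xl_j}$-derivative hitting the product of the remaining factors, thereby enlarging some other factor's derivative multiindex by the edge $\{x,l_j\}$. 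For repeated values of $l_j$ the resulting excess Gaussian moments stay uniformly bounded and can be separated out via the H\"older inequality~\eqref{e:Holder}.

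After all integrations by parts the remaining expectations are products $\prod_j \partial_{\gamma_j}m_{l_j}^{(k)}$ against bounded Gaussian monomials, which I would bound via Theorem~\ref{derivcontrolthrm} combined with~\eqref{e:Holder}. The final step is a graph-counting argument in the spirit of the proof of Lemma~\ref{lem2}: partition $(l_1,\ldots,l_{2t})$ by coincidences, count valid assignments of distinct values against the accumulated powers of $1/\sqrt{N}$ arising from the $|\gamma_j|$ and $\mathcal{C}_{\gamma_j}(l_j)$. Each edge $\{x,l_j\}$ added by an integration by parts either reduces the connected-component count by linking into the $\alpha+\delta_{\{y,z\}}$-graph or is paid for by an extra $1/\sqrt{N}$. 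The main obstacle is the clean bookkeeping of the target exponent $|\alpha|+\I_{x\notin\alpha}+\mathcal{C}_{\alpha+\delta_{\{y,z\}}}$, and in particular the $\I_{x\notin\alpha}$ term, which reflects the additional $1/\sqrt N$ gained precisely when $x$ is genuinely introduced into the graph by integration by parts rather than being already present as a vertex of $\alpha$.
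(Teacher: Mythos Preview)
Your proposal is correct and follows essentially the same route as the paper: the same cancellation of the Onsager derivatives, the same Leibniz split $\partial_\alpha D = S_\alpha + R_\alpha$, the direct bound of $R_\alpha$ via Theorem~\ref{derivcontrolthrm}, the Stein-type identity $\E[(g_{xl}F_l - \mathfrak d_{xl}F_l)G] = \E[F_l\,\mathfrak d_{xl}G]$ applied inside the expanded $2t$-th moment of $S_\alpha$, and a graph-counting in the spirit of Lemma~\ref{lem2}. The one point where the paper is slightly sharper than your sketch is the handling of repeated indices: rather than ``iteratively IBP on each $g_{xl_j}$'' and tracking excess Gaussian moments, the paper first partitions $(l_1,\ldots,l_{2t})$ by coincidence pattern and applies the Stein identity \emph{only} to the singleton blocks, leaving the non-singleton factors untouched and bounding them directly via H\"older with $\E[g^{2t}]\preceq 1$; this avoids the bookkeeping of derivatives hitting other $g$'s when indices coincide.
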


\begin{lem}\label{OnsagerErrorLem} For $\beta\ge 0$, $h\in \R$, $k,t\in\N$, $N\in\N$, $x\le N$, $\alpha\in\N_0^{\mathfrak p}$, there exists $C=C_{k,|\alpha|,\beta,h,t}>0$, s.t.
\[ \E\left[ \left(\partial_\alpha \mathcal{E}_x^{(k)} \right)^{2t}\right]\leq C N^{-t\left(|\alpha| +\I_{x\not\in\alpha}+\mathcal C_\alpha \right)} .\]

\end{lem}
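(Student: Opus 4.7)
I would proceed by induction on $k$, using Lemma~\ref{PathwiseLem} as a companion result and Theorem~\ref{derivcontrolthrm} to control derivatives of $m$. The case of choice~\eqref{On-new} is immediate: the convention $\md_{ii}=0$ gives $\mathcal E_x^{(k)} = -\tfrac{\beta}{\sqrt N}\md_{xx}m_x^{(k)} = 0$, so the bound holds trivially. For choice~\eqref{On-cla}, the base case $k=1$ also yields $\mathcal E_x^{(1)}\equiv 0$, since $m^{(0)}\equiv 0$ by A2) and $m^{(1)}$ is independent of $\mathbf G$ by A1). The real task is therefore the inductive step under choice~\eqref{On-cla}, assuming both lemmata up to level $k-1$.

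The first move is a structural rearrangement. Using $\md_{xl}m_l^{(k)} = (1-(m_l^{(k)})^2)\md_{xl}Y_l^{(k-1)}$ together with $\md_{xl}Y_l^{(k-1)} = \tfrac{\beta}{\sqrt N}m_x^{(k-1)} + \Delta^{(k-1)}_{l;x,l}$ (valid for $l\ne x$), and absorbing the deterministic Onsager piece $\beta^2(1-q_N^{(k)})m_x^{(k-1)}$ through the elementary identity $\tfrac{1}{N}\sum_{l\neq x}(1-(m_l^{(k)})^2)=(1-q_N^{(k)})-\tfrac{1}{N}(1-(m_x^{(k)})^2)$, one obtains the key formula
\[
\mathcal E_x^{(k)} \;=\; -\frac{\beta^2}{N}\bigl(1-(m_x^{(k)})^2\bigr)\,m_x^{(k-1)} \;+\; \frac{\beta}{\sqrt N}\sum_{l\neq x}\bigl(1-(m_l^{(k)})^2\bigr)\,\Delta^{(k-1)}_{l;x,l}.
\]
The first term is an explicit $O(1/N)$ remainder; the second carries the stochastic fluctuation.

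To bound $\E[(\partial_\alpha \mathcal E_x^{(k)})^{2t}]$, I would apply $\partial_\alpha$ via Leibniz. The first term is handled directly: expanding $(m_x^{(k)})^2=m_x^{(k)}\cdot m_x^{(k)}$, applying H\"older~\eqref{e:Holder}, and invoking Theorem~\ref{derivcontrolthrm} yields a bound of order $N^{-t(|\alpha|+\I_{x\notin\alpha}+\mathcal C_\alpha)-t}$, sharper than the target by a factor $N^{-t}$. For the second (main) term I would expand the $2t$-th power as a sum over $(l_1,\ldots,l_{2t})$, distribute $\partial_\alpha$ on each factor $(1-(m_{l_s}^{(k)})^2)\Delta^{(k-1)}_{l_s;x,l_s}$ by Leibniz into $\alpha^{(s)}=\alpha^{(s)'}+\alpha^{(s)''}$, then apply H\"older to reduce to moment bounds on each factor. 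Derivatives of $m_{l_s}^{(k)}$ are controlled by Theorem~\ref{derivcontrolthrm}, whereas $\partial_{\alpha^{(s)''}}\Delta^{(k-1)}_{l_s;x,l_s}$ is split as $\partial_{\alpha^{(s)''}+\delta_{\{x,l_s\}}}\mathcal E_{l_s}^{(k-1)} + \bigl(\Delta^{(k-1)}_{l_s;x,l_s}\text{-residue from Lemma~\ref{PathwiseLem}}\bigr)$, bounded by the inductive hypothesis of Lemma~\ref{OnsagerErrorLem} and by Lemma~\ref{PathwiseLem} at level $k-1$ respectively.

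The main obstacle is the combinatorial $N$-power accounting after summing over the tuple $(l_1,\ldots,l_{2t})$ and over all Leibniz decompositions. The crucial ingredient is the superadditivity inequality $\mathcal C_{\alpha^{(s)'}}(l_s)+\mathcal C_{\alpha^{(s)''}+\delta_{\{x,l_s\}}}\ge \mathcal C_{\alpha+\delta_{\{x,l_s\}}}$, combined with the dichotomy whether $l_s$ is already a vertex of $\alpha$ or a ``fresh'' index. In the fresh case, summing over the $\le N$ available values produces an $N^{+1}$ factor that is exactly compensated by the $N^{-1/2}$ arising from $\I_{l_s\notin\alpha^{(s)''}}$ in the Lemma~\ref{PathwiseLem}-bound together with the extra $N^{-1/2}$ gained because the edge $\delta_{\{x,l_s\}}$ merges the component containing $l_s$ into the one containing $x$ in $\alpha+\delta_{\{x,l_s\}}$. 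Careful term-by-term bookkeeping, analogous in spirit to the counting at the end of the proof of Lemma~\ref{lem2}, then delivers the sharp exponent $N^{-t(|\alpha|+\I_{x\notin\alpha}+\mathcal C_\alpha)}$.
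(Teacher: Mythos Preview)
Your approach is essentially the paper's: the same structural identity $\mathcal E_x^{(k)}=\frac{\beta}{\sqrt N}\sum_{l}(1-(m_l^{(k)})^2)\Delta_{l;x,l}^{(k-1)}$ (plus the harmless $O(1/N)$ diagonal piece you isolate explicitly), the same Leibniz--H\"older expansion over tuples $(l_1,\ldots,l_{2t})$, the same splitting of $\partial_{\alpha''}\Delta_{l;x,l}^{(k-1)}$ into $\partial_{\alpha''+\delta_{\{x,l\}}}\mathcal E_l^{(k-1)}$ (handled by induction) plus a residue controlled by Lemma~\ref{PathwiseLem}, and the same superadditivity-of-components accounting. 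Your verbal description of the fresh-index combinatorics is slightly off (when $l_s\notin\alpha$ there is no ``component containing $l_s$'' to merge), but the actual bookkeeping you indicate matches the paper's and goes through.
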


\begin{proof}[Proof of Lemma \ref{PathwiseLem}] Writing $\hat{\alpha}:= \alpha + \delta_{\{y,z\}}$ to shorten notation we have by definition of $\Delta$ and $\mathcal{E}$ that
$$ \partial_{\alpha} \left( \Delta^{(k)}_{x;y,z} - \partial_{\delta_{\{y,z\}}} \mathcal E^{(k)}_{x}\right) = \frac{\beta}{\sqrt N}\sum_j\left(  g_{j,x} \partial_{\hat{\alpha}}m^{(k)}_j-\partial_{\hat{\alpha}+\delta_{\{j,x\}}} m^{(k)}_j  \right)+\frac{\beta}{\sqrt N}\sum_j \I_{\{j,x\}\in \alpha} \partial_{\hat{\alpha}-\delta_{\{j,x\}}}m^{(k)}_j.$$ 
Hence
$$ \E\left[\left(\partial_{\alpha} \left( \Delta^{(k)}_{x;y,z} - \partial_{\delta_{\{y,z\}}} \mathcal E^{(k)}_{x}\right)\right)^{2t}\right]$$
\begin{equation}\label{pathwise lem comp1}\preceq N^{-t}\E\left[\left(\sum_j\left(  g_{j,x} \partial_{\hat{\alpha}}m^{(k)}_j-\partial_{\hat{\alpha}+\delta_{\{j,x\}}} m^{(k)}_j  \right)\right)^{2t}\right]+N^{-t}\E\left[\left(\sum_j \I_{\{j,x\}\in \alpha} \partial_{\hat{\alpha}-\delta_{\{j,x\}}}m^{(k)}_j\right)^{2t}\right].\end{equation}
Using that $|\alpha|,t \preceq 1$ we can estimate the second expectation by  
\begin{equation}\label{pathwise lem comp2}\E\left[\left(\sum_j \I_{\{j,x\}\in \alpha} \partial_{\hat{\alpha}-\delta_{\{j,x\}}}m^{(k)}_j\right)^{2t}\right] \preceq \sum_j \I_{\{j,x\}\in \alpha}\E\left[\left( \partial_{\hat{\alpha}- \delta_{\{j,x\}}}m^{(k)}_j\right)^{2t}\right] \preceq   N^{-t\left(|\alpha|+\mathcal{C}_{\hat{\alpha}}-\I_{x\in \alpha}\right)}\end{equation}
the last step by Theorem \ref{derivcontrolthrm} and since for $x\in\alpha$ we have $\mathcal{C}_{\hat{\alpha}-\delta_{\{j,x\}}}(j) \geq \mathcal{C}_{\hat{\alpha}}(x)\geq \mathcal{C}_{\hat{\alpha}}-\I_{x\in\alpha}$. It remains to control the first expectation in \eqref{pathwise lem comp1}. To this end we expand and collect same $j$, i.e. for $\mathcal{P}_{2t}$ the set of partitions of $\{1,..,2t\}$ and writing $[i]$ for the block in $P$ containing $i$ we have  
\begin{multline*}
\E\left[\left(\sum_j\left(  g_{j,x} \partial_{\hat{\alpha}}m^{(k)}_j-\partial_{\hat{\alpha}+\delta_{\{j,x\}}} m^{(k)}_j  \right)\right)^{2t}\right] =\\
 \sum_{P\in \mathcal{P}_{2t}}\sum_{\substack{1\leq j_{B_1}<\ldots< j_{B_L}\leq N\\ \{B_1,\ldots,B_L\}=P}}\E\left[\prod_{i=1}^{2t}\left(  g_{j_{[i]},x} \partial_{\hat{\alpha}}m^{(k)}_{j_{[i]}}-\partial_{\hat{\alpha}+\delta_{\{j_{[i]},x\}}} m^{(k)}_{j_{[i]}}  \right)\right]
\end{multline*}
By Gaussian partial integratian a factor $g_{j_{[i]},x} \partial_{\gamma}m^{(k)}_{j_{[i]}}-\partial_{\gamma+\delta_{\{j_{[i]},x\}}} m^{(k)}_{j_{[i]}}$ has the same effect as taking the derivative w.r.t. $g_{j_{[i]},x}$ and then multiplying by  $\partial_{\gamma}m^{(k)}_{j_{[i]}}$. I.e. for any random variable $Z$ such that the following is defined we have $\E[ \left( g_{j_{[i]},x} \partial_{\gamma}m^{(k)}_{j_{[i]}}-\partial_{\gamma+\delta_{\{j_{[i]},x\}}} m^{(k)}_{j_{[i]}}\right)Z ] = \E[\partial_{\gamma}m^{(k)}_{j_{[i]}} \partial_{\delta_{\{j_{[i]},x\}}}Z]$. Applying this for  all $i$ such that $|[i]|=1$ and by product rule we obtain  the representation 
$$  =\sum_{P\in \mathcal{P}_{2t}}\sum_{\substack{1\leq j_{B_1}<\ldots< j_{B_L}\leq N\\ \{B_1,\ldots,B_L\}=P}}\sum_{\tau}\E\left[\prod_{\substack{ i=1,..,2t:\\ |[i]|=1}}\partial_{\hat{\alpha}+\tau_i} m^{(k)}_{j_{[i]}}\prod_{\substack{ i=1,..,2t:\\ |[i]|>1}}\left(  g_{j_{[i]},x} \partial_{\hat{\alpha}+\tau_i}m^{(k)}_{j_{[i]}}-\partial_{\hat{\alpha}+\tau_i+\delta_{\{j_{[i]},x\}}} m^{(k)}_{j_{[i]}}  \right)\right],$$
where the $\tau$-sum is over all $\tau_1, \dots, \tau_{2t} \in \N_0^{\mathfrak p}$, s.t. $\sum_i \tau_i = \sum_{i:|[i]|=1} \delta_{\{j_{[i]},x\}}$ and $\{j_{[i]},x\} \not\in \tau_i$. Hence estimating by generalized H\"older inequality yields 
\begin{multline}\label{pathwise lem comp3}  \preceq \sum_{P\in \mathcal{P}_{2t}}\sum_{\substack{1\leq j_{B_1}<\ldots< j_{B_L}\leq N\\ \{B_1,\ldots,B_L\}=P}}\sum_{\tau} \prod_{i: |[i]|=1}\E\left[\left(\partial_{\hat{\alpha}+\tau_i} m^{(k)}_{j_{[i]}}\right)^{2t}\right]^{1/2t} \\
\prod_{i: |[i]|>1}\E\left[\left(  g_{j_{[i]},x} \partial_{\hat{\alpha}+\tau_i}m^{(k)}_{j_{[i]}}-\partial_{\hat{\alpha}+\tau_i+\delta_{\{j_{[i]},x\}}} m^{(k)}_{j_{[i]}}  \right)^{2t}\right]^{1/2t}.\end{multline}
Using that $\E[g_{.,.}^{2t}] \preceq 1$ for any $t\preceq 1$, H\"older inequality and applying Theorem \ref{derivcontrolthrm} gives 
\begin{multline}
\E\left[\left(  g_{j_{[i]},x} \partial_{\hat{\alpha}+\tau_i}m^{(k)}_{j_{[i]}}  \right)^{2t}\right], \E\left[\left(\partial_{\hat{\alpha}+\tau_i+\delta_{\{j_{[i]},x\}}} m^{(k)}_{j_{[i]}}\right)^{2t}\right], \E\left[\left( \partial_{\hat{\alpha}+\tau_i}m^{(k)}_{j_{[i]}}  \right)^{2t}\right]  \\
\preceq N^{-t\left( |\hat{\alpha}|+|\tau_i|+ \mathcal{C}_{\hat{\alpha}+\tau_i}(j_{[i]})\right)}.
\end{multline}
Hence we obtain estimating \eqref{pathwise lem comp3} 
\begin{multline} \E\left[\left(\sum_j\left(  g_{j,x} \partial_{\hat{\alpha}}m^{(k)}_j-\partial_{\hat{\alpha}+\delta_{\{j,x\}}} m^{(k)}_j  \right)\right)^{2t}\right] \\ 
\preceq \sum_{P\in \mathcal{P}_{2t}}\sum_{\substack{1\leq j_{B_1}<\ldots< j_{B_L}\leq N\\ \{B_1,\ldots,B_L\}=P}}\sum_\tau \prod_{i=1}^{2t} N^{- \frac{1}{2}\left( |\hat{\alpha}|+|\tau_i|+ \mathcal{C}_{\hat{\alpha}+\tau_i}(j_{[i]})\right)}. 
\end{multline}
Estimating $\mathcal{C}_{\hat{\alpha}+\tau_i}(j_{[i]}) \geq \mathcal{C}_{\hat{\alpha}} - \I_{j_{[i]}\in \hat{\alpha}}- \sum_{i'} \I_{ j_{[i']}\in \tau_{i}, j_{[i']}\in \hat{\alpha}}$ and collecting terms using $|[i']|=1 \Leftrightarrow j_{[i']}\in \sum_{i} \tau_i$ we obtain  
$$ \leq N^{-t \left(|\alpha|+\mathcal{C}_{\hat{\alpha}}\right)} \sum_{P\in \mathcal{P}_{2t}}\sum_{\substack{1\leq j_{B_1}<\ldots< j_{B_L}\leq N\\ \{B_1,\ldots,B_L\}=P}}\sum_\tau N^{\sum_i \left(\frac{1+\I_{|[i]|=1}}{2}\right)\left(\I_{j_{[i]}\in\hat{\alpha}}-1\right)}.$$
Estimating further by dropping the requirement for distinct $j$ we obtain the result 
\begin{equation}\label{pathwise lem comp4} \preceq N^{-t \left(|\alpha|+\mathcal{C}_{\hat{\alpha}}\right)} \sum_{P\in \mathcal{P}_{2t}} \prod_{b=1}^{L} \sum_{j=1}^{N} N^{\left(\frac{|B_b|+\I_{|B_b|=1}}{2}\right)\left(\I_{j\in\hat{\alpha}}-1\right)}\preceq N^{-t \left(|\alpha|+\mathcal{C}_{\hat{\alpha}}\right)}.\end{equation}
The last step since the sums concerning $P$ and $\tau$ only have $\preceq 1$ many summands and the innermost sum is bounded by $|\hat{\alpha}|+1$ as the exponent of $N$ is at most $-1$ except for the $|\hat{\alpha}|$ many cases where it is zero. Plugging \eqref{pathwise lem comp2} and \eqref{pathwise lem comp4} into \eqref{pathwise lem comp1} yields the claim.\end{proof}

\begin{proof}[Proof of Lemma \ref{OnsagerErrorLem}]
	It suffices to consider choice~\eqref{On-cla} as the assertion is immediate for choice~\eqref{On-new}.
	
	We recall that
	$m^{(k)}_l=\tanh(h+Y^{(k-1)}_l)$.
	By the chain rule and~\eqref{Yderiv}, we have
\beq \bea
{\mathfrak d}_{ xl} m_l^{(k)}
	&= \left(1-{m_l^{(k)}}^2\right)  {\mathfrak d}_{ xl}   Y_l^{(k-1)} \\
	&= \left(1-{m_l^{(k)}}^2\right) \left[\frac{\beta}{\sqrt{N}}\sum_{p\neq l} \delta_{\{l,p\}\{x,l\}} m_p^{(k-1)}+ \Delta^{(k-1)}_{l;x,l}\right]\\
	&= \left(1-{m_l^{(k)}}^2\right) \left[\frac{\beta}{\sqrt{N}} m_x^{(k-1)}+ \Delta^{(k-1)}_{l;x,l}\right].
\eea \eeq

	Hence,
	\[\mathcal E^{(k)}_x=\frac{\beta}{\sqrt{N}}  \sum_l  {\mathfrak d}_{ xl} m_l^{(k)} 
	- \beta^2 \left(1-   q^{(k)}_N   \right) m^{(k-1)}_x
	= \frac{\beta}{\sqrt{N}}\sum_{l} \left(1-{m_l^{(k)}}^2\right) \Delta^{(k-1)}_{l;x,l}.\]
	By the product rule, it follows that
	\begin{align*}
		\partial_\alpha \mathcal{E}_x^{(k)}
	&= \frac{\beta}{\sqrt{N}}\sum_{l} \partial_\alpha \left(1-{m_l^{(k)}}\right) \left(1+{m_l^{(k)}}\right)\Delta^{(k-1)}_{l;x,l}\\
		&= \frac{\beta}{\sqrt{N}}\sum_{l}\sum_{\substack{\gamma,\delta,\varepsilon\in\N_0^{\mathfrak p}:\\\gamma+\delta+\varepsilon=\alpha}}
		\binom{\alpha}{\gamma\, \delta\, \epsilon}
		\left(\I_{|\gamma|=0}-\partial_\gamma {m_l^{(k)}}\right)
		\left(\I_{|\delta|=0}+\partial_\delta {m_l^{(k)}}\right)
		\partial_\varepsilon \Delta^{(k-1)}_{l;x,l}.
	\end{align*}
	Hence, by expansion and the H\"older inequality~\eqref{e:Holder},
	\begin{multline}\label{e:DeltaE-p1}
		\E\left[ \left(\partial_\alpha \mathcal{E}_x^{(k)} \right)^{2t}\right] \\ 
		\preceq N^{-t}\sum_{l_1,\ldots,l_{2t}=1}^N \max_{\substack{\gamma_b,\delta_b,\varepsilon_b\in\N_0^{\mathfrak p}:\\\gamma_b+\delta_b+\varepsilon_b=\alpha}}
		\E\left[ \prod_{b=1}^{2t}
			\left(\I_{|\gamma_b|=0}-\partial_{\gamma_b} {m_{l_b}^{(k)}}\right)
		\left(\I_{|\delta_b|=0}+\partial_{\delta_b} {m_{l_b}^{(k)}}\right)
		\partial_{\varepsilon_b} \Delta^{(k-1)}_{l_b;r,l_b}
		\right]\quad\quad\\
		\le N^{-t}\sum_{l_1,\ldots,l_{2t}=1}^N \max_{\substack{\gamma_b,\delta_b,\varepsilon_b\in\N_0^{\mathfrak p}:\\\gamma_b+\delta_b+\varepsilon_b=\alpha}}
		\prod_{b=1}^{2t} \E\left[ 
		\left(\I_{|\gamma_b|=0}-\partial_{\gamma_b} {m_{l_b}^{(k)}}\right)^{6t}\right]^{1/6t}
		\E\left[\left(\I_{|\delta_b|=0}+\partial_{\delta_b} {m_{l_b}^{(k)}}\right)^{6t}\right]^{1/6t}\\
		\times\E\left[\left(\partial_{\varepsilon_b} \Delta^{(k-1)}_{l_b;x,l_b}\right)^{6t}
		\right]^{1/6t}
	\end{multline}	
	As $(a+b)^{6t}\preceq a^{6t} + b^{6t}$ and by Theorem~\ref{derivcontrolthrm},
	\begin{equation}\label{e:trian-p-DeltaE}
		\E\left[\left(\partial_{\varepsilon_b} \Delta^{(k-1)}_{l_b;x,l_b}\right)^{6t}
		\right]
		\preceq \E\left[\left(\partial_{\varepsilon_b} \partial_{\delta_{\{x,l_b\}}} \mathcal E^{(k-1)}_{l_b}\right)^{6t}
		\right]
		+\E\left[\left(\partial_{\varepsilon_b} \left( \Delta^{(k-1)}_{l_b;x,l_b} - \partial_{\delta_{\{x,l_b\}}} \mathcal E^{(k-1)}_{l_b}\right)\right)^{6t}\right]
	\end{equation}
and
	\[\E\left[ 
	\left(\I_{|\gamma_b|=0}-\partial_{\gamma_b} {m_{l_b}^{(k)}}\right)^{6t}\right]
	\preceq \I_{|\gamma_b|=0} +  \E\left[ 
	\left(\partial_{\gamma_b} {m_{l_b}^{(k)}}\right)^{6t}\right]
	\preceq N^{-3t|\gamma_b| -3t\mathcal C_{\gamma_b}(l_b)}.\]
The second term on the rhs of~\eqref{e:trian-p-DeltaE} is $\preceq N^{-3t\left(|\varepsilon_b| +\mathcal C_{\varepsilon_b+\delta_{\{x,l_b\}}}+ \I_{l_b\not\in \varepsilon_b}\right)}$ by Lemma~\ref{PathwiseLem}.
We conclude by induction. The assertion holds for $k=1$ as $\mathcal{E}^{(1)} \defi 0$, since $m^{(0)}=0$ and $m^{(1)}$ is independent of $g$. From now on, we assume that the assertion holds for $k-1$ for some $k\geq 2$. Then, the first term on the rhs of~\eqref{e:trian-p-DeltaE} is $\preceq N^{-3t\left(|\varepsilon_b + \delta_{\{x,l_b\}}| +\mathcal C_{\varepsilon_b+ \delta_{\{x,l_b\}}}\right)}$ by the induction hypothesis.

Plugging these bounds into~\eqref{e:DeltaE-p1}, and using that
\[ |\gamma_b| + |\delta_b| + |\varepsilon_b| = |\alpha|,\]
\[\mathcal C_{\gamma_b}(l_b)\ge \mathcal C_{\gamma_b} - \I_{l_b\in \gamma_b},\qquad \mathcal C_{\gamma_b} + \mathcal C_{\delta_b} + \mathcal C_{\varepsilon_b+\delta_{\{x,l_b\}}}\ge \mathcal{C}_{\alpha+\delta_{\{x,l_b\}}} \ge \mathcal{C}_{\alpha}+\I_{x\not\in\alpha} -\I_{l_b\in\alpha},\]
as well as $|\alpha|\preceq 1$, we obtain
	\begin{align*}
	&\E\left[ \left(\partial_\alpha \mathcal{E}_x^{(k)} \right)^{2t}\right]\\
	&\preceq 
	N^{-t}\sum_{l_1,\ldots,l_{2t}=1}^N \max_{\substack{\gamma_b,\delta_b,\varepsilon_b\in\N_0^{\mathfrak p}:\\\gamma_b+\delta_b+\varepsilon_b=\alpha}}
	\prod_{b=1}^{2t} N^{-\frac{1}{2}\left(|\gamma_b| +\mathcal C_{\gamma_b}(l_b)+|\delta_b| +\mathcal C_{\delta_b}(l_b)+|\varepsilon_b| + \mathcal C_{\varepsilon_b+ \delta_{\{x,l_b\}}}+ \I_{l_b\not\in\varepsilon_b}\right)}\\
	&\le N^{-t(|\alpha| +\mathcal C_\alpha+ \I_{x\not\in\alpha}) -2 t}  
	\sum_{l_1,\ldots,l_{2t}=1}^N \max_{\substack{\gamma_b,\delta_b,\varepsilon_b\in\N_0^{\mathfrak p}:\\\gamma_b+\delta_b+\varepsilon_b=\alpha}}
	\prod_{b=1}^{2t} N^{\frac{1}{2}\left(\I_{l_b\in\gamma_b} + \I_{l_b\in \delta_b}+\I_{l_b\in\varepsilon_b}+\I_{l_b\in\alpha}\right)}\\
	&= N^{-t(|\alpha| +\mathcal C_\alpha+ \I_{x\not\in\alpha}) - 2t}  
	\left(\sum_{l=1}^N 
	 N^{\I_{l\in\alpha}}\right)^{2t}\le N^{-t\left(|\alpha| +\mathcal C_\alpha+\I_{x\not\in\alpha}  \right)}  
\end{align*}	
	as required.\end{proof}

\section{Covariances of local fields}\label{covlocf-sec}
The aim of this section is to prove the propagation relation from Theorem~\ref{q-prop}.
As first step, we use Theorem~\ref{Stein-TAP} to show the propagation relation in expectation:
\begin{lem}\label{prop-E}
	For $\beta, h\in\R$, $k,k'\ge 2$, there exists $C\defi C_{k,k',\beta, h}>0$ such that
	\[\left| \E\left[q^{(k,k')}_N\right] -  \E\left[\psi\left(q^{(k-1,k'-1)}_N,q^{(k-1)}_N, q^{(k'-1)}_N\right)\right] \right|  \le \frac{C}{\sqrt N}\]
	and
	\[\left| \E\left[q^{(k,1)}_N\right] - \E\left[\chi\left(q^{(k-1)}_N\right) \right]\frac{1}{N}\sum_{i=1}^N  m^{(1)}_i \right|  \le \frac{C}{\sqrt N}.\]
\end{lem}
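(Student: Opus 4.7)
The plan is to apply Theorem~\ref{Stein-TAP} pointwise in $i$ to suitable $\tanh$-based test functions, using the identity $m^{(k)}_i=\tanh(h+Y^{(k-1)}_i)$ to rewrite both sides as averages of smooth functions of the effective fields.

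For the first estimate, write
\[
\E\bigl[q^{(k,k')}_N\bigr] = \frac{1}{N}\sum_{i=1}^N \E\bigl[\tanh(h+Y^{(k-1)}_i)\,\tanh(h+Y^{(k'-1)}_i)\bigr],
\]
fix $i$, and consider $f(y_1,y_2):=\tanh(h+y_1)\tanh(h+y_2)$. Since $\tanh$ and all its derivatives are uniformly bounded, all partial derivatives of $f$ up to order three are bounded by a universal constant, so $f\in\mathcal H^2$ up to rescaling, which we absorb into $C$. Applying Theorem~\ref{Stein-TAP} with $L=2$, $(i_1,k_1)=(i,k-1)$, $(i_2,k_2)=(i,k'-1)$ yields
\[
\bigl| \E[f(Y^{(k-1)}_i,Y^{(k'-1)}_i)] - \E\,\EE_Z f(\Sigma^{1/2}Z) \bigr| \le \frac{C}{\sqrt N},
\]
uniformly in $i$. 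By the covariance structure~\eqref{covarianza} and the definition~\eqref{e:psi} of $\psi$, the inner Gaussian integral equals $\psi\bigl(q^{(k-1,k'-1)}_N,\,q^{(k-1)}_N,\,q^{(k'-1)}_N\bigr)$. Averaging over $i$ gives the first bound.

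For the second estimate, condition on $\bm^{(1)}$, which is independent of $\bf G$ by assumption A1. Then
\[
\E\bigl[q^{(k,1)}_N \mid \bm^{(1)}\bigr] = \frac{1}{N}\sum_{i=1}^N m^{(1)}_i\, \E\bigl[\tanh(h+Y^{(k-1)}_i) \mid \bm^{(1)}\bigr].
\]
Applying Theorem~\ref{Stein-TAP} with $L=1$, $(i_1,k_1)=(i,k-1)$ and test function $g(y)=\tanh(h+y)\in\mathcal H^2$ gives, uniformly in $i$,
\[
\bigl|\E[\tanh(h+Y^{(k-1)}_i)\mid \bm^{(1)}] - \E[\chi(q^{(k-1)}_N)\mid \bm^{(1)}]\bigr| \le \frac{C}{\sqrt N},
\]
where the Gaussian integral $\EE_Z[\tanh(h+\beta\sqrt{q^{(k-1)}_N}Z)]=\chi(q^{(k-1)}_N)$ is read off from~\eqref{e:chi}. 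Multiplying by $m^{(1)}_i\in[-1,1]$, summing over $i$, and using $|\tfrac1N\sum_i m^{(1)}_i|\le 1$, one obtains the second bound (interpreted conditionally on $\bm^{(1)}$ when the starting value is random, as is the natural reading of the statement).

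There is no substantive obstacle: once Theorem~\ref{Stein-TAP} is in hand, Lemma~\ref{prop-E} follows as a direct application, the only points to verify being that the $\tanh$-based test functions sit in $\mathcal H^2$ and that the relevant two-dimensional and one-dimensional Gaussian integrals are exactly the functions $\psi$ and $\chi$. The second estimate's only additional ingredient is the conditioning on $\bm^{(1)}$, which is legitimate precisely because of assumption A1.
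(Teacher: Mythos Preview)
Your proposal is correct and follows essentially the same route as the paper's own proof: expand $q^{(k,k')}_N$ via the recursion $m^{(k)}_i=\tanh(h+Y^{(k-1)}_i)$, apply Theorem~\ref{Stein-TAP} site by site to the $\tanh$-product test function, and identify the resulting conditional Gaussian integral with $\psi$ (respectively $\chi$). Your extra remarks about rescaling $f$ into $\mathcal H^2$ and about conditioning on $\bm^{(1)}$ for the second estimate are exactly the small verifications the paper leaves implicit.
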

\begin{proof}
	By the definition~\eqref{def-qkk} of $q^{(k,k')}_N$ and the TAP recursion~\eqref{TAP-rec},
	\[\E\left[q^{(k,k')}_N\right]=\frac{1}{N}\sum_{i=1}^N \E\left[m^{(k)}_i m^{(k')}_i\right]
	=\frac{1}{N}\sum_{i=1}^N  \E\left[ \tanh\left(h+Y^{(k-1)}_i\right)\tanh\left(h+Y^{(k'-1)}_i\right)\right].
	\]
	By Theorem~\ref{Stein-TAP},
	\[=\frac{1}{N}\E\left[\psi(q^{(k-1,k'-1)}_N,q^{(k-1)}_N, q^{(k'-1)}_N)\right] + O(N^{-1/2}).\]
	Analogously,
	\begin{multline*}
		\E\left[q^{(k,1)}_N\right]=\frac{1}{N}\sum_{i=1}^N \E\left[m^{(k)}_i \right] m^{(1)}_i
		=\frac{1}{N}\sum_{i=1}^N\E\left[ \tanh\left(h+Y^{(k-1)}_i\right)\right]    m^{(1)}_i\\
		=\E\left[ \chi\left(q^{(k-1)}_N\right) \right] \frac{1}{N}\sum_{i=1}^N   m^{(1)}_i +O(N^{-1/2}).
	\end{multline*}
\end{proof}
Next we show, using Theorem~\ref{Stein-TAP} again, that $q^{(k,k')}_N$ concentrates around its expectation:
\begin{lem}\label{q-conc}
	For $\beta, h\in\R$, $k,k'\in\N$, there exists $C\defi C_{k,k',\beta, h}>0$ such that 
	\begin{equation*}
		\E\left[\left( q_N^{(k,k')} - \E\left[q_N^{(k,k')}\right]\right)^2\right] \leq \frac{C}{\sqrt N}
	\end{equation*}
	for all $N\in \N$.
\end{lem}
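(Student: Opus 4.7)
The plan is to compute the second moment $\E[(q_N^{(k,k')})^2]$ via Theorem~\ref{Stein-TAP} applied to a four-vector of effective fields at two distinct sites, combine the result with the first-moment formula from Lemma~\ref{prop-E}, and close an induction on $M=\max(k,k')$. The base case $M=1$ is immediate: under assumption A1, $m^{(1)}$ is independent of $\mathbf G$, so $q_N^{(1,1)}$ carries no $\mathbf G$-randomness and its $\E$-variance vanishes.

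For the inductive step (with $k,k'\ge 2$) I would expand
\[
\E\left[(q_N^{(k,k')})^2\right] = \frac{1}{N^2}\sum_{i,j=1}^N \E\left[\tanh(h+Y_i^{(k-1)})\tanh(h+Y_i^{(k'-1)})\tanh(h+Y_j^{(k-1)})\tanh(h+Y_j^{(k'-1)})\right].
\]
The diagonal $i=j$ contributes $O(1/N)$ since every factor is bounded by $1$. For $i\neq j$, I would apply Theorem~\ref{Stein-TAP} with $L=4$ to the vector $(Y_i^{(k-1)},Y_i^{(k'-1)},Y_j^{(k-1)},Y_j^{(k'-1)})$ and test function $f(y_1,\ldots,y_4)=\prod_{l=1}^4\tanh(h+y_l)$; this $f$ lies in $\mathcal H^2$ (after rescaling by a constant depending on $\beta,h$) because $\tanh$ has all derivatives uniformly bounded. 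The key structural point is that, by~\eqref{covarianza}, the conditional Gaussian $\mathbf Z$ splits into two independent blocks $(Z_1,Z_2)$ and $(Z_3,Z_4)$ associated to the two distinct sites $i$ and $j$, so that the conditional expectation factorizes and equals $\psi(q_N^{(k-1,k'-1)},q_N^{(k-1)},q_N^{(k'-1)})^2$. Summing the $O(N^2)$ off-diagonal terms against the $1/\sqrt N$ Stein bound yields
\[
\E\left[(q_N^{(k,k')})^2\right] = \E\left[\psi\!\left(q_N^{(k-1,k'-1)},q_N^{(k-1)},q_N^{(k'-1)}\right)^{\!2}\right] + O(N^{-1/2}).
\]

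Squaring Lemma~\ref{prop-E} and using $|\psi|\le 1$ to absorb the cross term gives a matching formula for $\E[q_N^{(k,k')}]^2$; subtracting produces
\[
\Var\!\left(q_N^{(k,k')}\right)=\Var\!\left(\psi\!\left(q_N^{(k-1,k'-1)},q_N^{(k-1)},q_N^{(k'-1)}\right)\right)+O(N^{-1/2}).
\]
Since $\psi$ is globally Lipschitz in all three arguments on $[-1,1]^3$ (its partials are bounded expectations of smooth bounded functions of Gaussians), one has $\Var(\psi(X_1,X_2,X_3))\le L^2(\Var X_1+\Var X_2+\Var X_3)$, and the three variances on the right all concern $q_N^{(\cdot,\cdot)}$ with strictly smaller $\max$ index, hence are $\le C/\sqrt N$ by the induction hypothesis; this closes the induction. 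Boundary cases with $k=1$ or $k'=1$ are handled identically, using the $\chi$-version of Lemma~\ref{prop-E} together with Theorem~\ref{Stein-TAP} applied to the two-vector $(Y_i^{(k-1)},Y_j^{(k-1)})$ at distinct sites, pulling the $\mathbf G$-independent factor $m_i^{(1)}m_j^{(1)}$ (bounded by $1$) outside the expectation.

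The main obstacle is conceptual rather than computational: one must recognize that the conditional block-diagonal structure of $\Sigma$ across distinct sites is precisely what turns the second-moment expansion into $\E[\psi^2(\cdot)]$ instead of $\E[\psi(\cdot)]^2$, making the variance of $q_N^{(k,k')}$ inherit from (and not amplify) the variances of the lower-level $q$'s. Once this is identified, the rest is bookkeeping: choosing $\max(k,k')$ as the induction parameter, checking that $f$ and $\psi$ have the required regularity, and tracking that all constants produced are allowed to depend on $k,k',\beta,h$ but not on $N$.
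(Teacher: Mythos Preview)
Your argument is correct and shares the paper's core mechanism: expand $(q_N^{(k,k')})^2$ as a double sum over sites $i,j$, apply Theorem~\ref{Stein-TAP} with $L=4$ to the vector $(Y_i^{(k-1)},Y_i^{(k'-1)},Y_j^{(k-1)},Y_j^{(k'-1)})$, and use the block-diagonal structure of $\Sigma$ across distinct sites to factor the conditional Gaussian expectation. The difference lies in how the residual randomness of $\Sigma$ is handled afterwards. The paper's proof passes directly from the four-point Gaussian expectation to the product $\E[\,\cdot\,]\,\E[\,\cdot\,]$ and identifies the result with $\bigl(\E[q_N^{(k,k')}]\bigr)^2$; since the two conditional blocks still share the random $\Sigma$, this step amounts to $\E[\psi^2]=(\E[\psi])^2+O(N^{-1/2})$, which the paper does not argue separately at this point (Corollary~\ref{conc-psi}, which would supply it, is stated later and itself invokes Lemma~\ref{q-conc}). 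Your induction on $\max(k,k')$ together with the Lipschitz bound $\Var\bigl(\psi(X_1,X_2,X_3)\bigr)\le L^2\sum_i\Var(X_i)$ is exactly what resolves this: it reduces $\Var(\psi)$ at level $(k,k')$ to the variances of $q_N^{(\cdot,\cdot)}$ at strictly smaller maximal index, closing the argument self-containedly. The cost is a slightly longer proof and a forward reference to the Lipschitz property of $\psi$ (Corollary~\ref{Lipschitz}), but that result depends only on the elementary Lemma~\ref{bounded} and not on Lemma~\ref{q-conc}, so no circularity is introduced.
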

\begin{proof}

	We first consider the case that $k,k'\ge 2$. By the TAP recursion~\eqref{TAP-rec},
	\[
	\E\left[q_N^{(k,k')} \right]
	=\sum_{i=1}^N\frac{1}{N}\E\left[ m^{(k)}_i m^{(k')}_i\right]
	=\sum_{i=1}^N\frac{1}{N}\E\left[\tanh\left(h+Y^{(k-1)}_i\right)\tanh\left( h+ Y^{(k'-1)}_i\right)\right].
	\]
	With $f(x,y)=\tanh(h+x)\tanh(h+y)$ and ${\bf Z}$ defined in Theorem~\ref{Stein-TAP}, we obtain
	\[
	=\sum_{i=1}^N \frac{1}{N}\E\left[\tanh\left(h+ Z_i^{(k-1)}\right)
	\tanh\left(h+ Z_i^{(k'-1)}\right)\right] + O(1/\sqrt{N}).
	\]
	For the second moment, we obtain
	\begin{multline*}
		\E\left[\left(q_N^{(k,k')} \right)^2 \right]
		=\frac{1}{N^2} \sum_{i,j=1}^N
		\E\left[m^{(k)}_i m^{(k')}_i m^{(k)}_j m^{(k')}_j\right]\\
		=\frac{1}{N^2} \sum_{i,j=1}^N\E\left[\tanh\left(h+Y^{(k-1)}_i\right)\tanh\left( h+ Y^{(k'-1)}_i\right)
		\tanh\left(h+Y^{(k-1)}_j\right)\tanh\left( h+ Y^{(k'-1)}_j\right)\right].
	\end{multline*}
	Again by Theorem~\ref{Stein-TAP} with $f(x,y,z,w)=\tanh(h+x)\tanh(h+y)\tanh(h+z)\tanh(h+w)$,
	\begin{multline*}
		=\frac{1}{N^2} \sum_{i,j=1}^N \E\Big[\tanh\left(h+Z_i^{(k-1)}\right)\tanh\left( h+ Z_i^{(k'-1)}\right)\\
		\times\tanh\left(h+Z_j^{(k-1)}\right)\tanh\left( h+ Z_j^{(k'-1)}\right)\Big] + O(1/\sqrt{N}).
	\end{multline*}
	By the independence structure of the covariance matrix $\Sigma$,
	\begin{multline*}
		=\frac{1}{N^2} \sum_{i,j=1}^N \E\left[\tanh\left(h+Z_i^{(k-1,i)}\right)\tanh\left( h+ Z_i^{(k'-1)}\right)\right]\\
		\times\E\left[\tanh\left(h+Z_j^{(k-1)}\right)\tanh\left( h+ Z_j^{(k'-1)}\right)\right]+ O(1/\sqrt{N})\\
		=\E\left[\frac{1}{N} {m^{(k)}}^T m^{(k')}  \right]^2 + O(1/\sqrt N)\qquad\qquad\qquad\qquad\qquad\qquad\qquad\qquad\qquad\qquad.
	\end{multline*}
	Hence,
	\begin{multline*}
		\E\left[\left( \frac{1}{N} {m^{(k)}}^T m^{(k')} - \E\left[\frac{1}{N} {m^{(k)}}^T m^{(k')}\right]\right)^2\right]\\
		=\E\left[\left(\frac{1}{N} {m^{(k)}}^T m^{(k')} \right)^2 \right]
		- \E\left[\frac{1}{N} {m^{(k)}}^T m^{(k')} \right]^2=O(1/\sqrt{N}).
	\end{multline*}
	
	Next we consider the case that $k\ge 2$, $k'=1$. Using that the initial value $m^{(1)}$ is independent of $g$, we obtain
	\[\E\left[ m^{(k)}_i m^{(1)}_i \right] = \E\left[ m^{(k)}_i \right] m^{(1)}_i 
	= \E\left[ \tanh\left(h+Z_i^{(k-1)}\right)\right] m^{(1)}_i + O(1/\sqrt N)\]
	and
	\begin{multline*}
		\E\left[m^{(k)}_i m^{(1)}_i m^{(k)}_j m^{(1)}_j\right]
		= \E\left[m^{(k)}_i  m^{(k)}_j \right] m^{(1)}_i m^{(1)}_j\\
		= \E\left[\tanh\left(h+Z_i^{(k-1)}\right)\tanh\left(h+Z_j^{(k-1)}\right) \right] m^{(1)}_i m^{(1)}_j + O(1/\sqrt N)
	\end{multline*}
	and the above computation for $k,k'\geq 2$ passes through in this case just the same.
	
	The case $k=k'=1$ is trivial as $m^{(1)}$ is assumed independent of $g$.
\end{proof}
We also show concentration of $\psi(q^{(k,k')}_N,q^{(k)}_N, q^{(k')}_N)$ and of $\chi(q^{(k)}_N)$ around the expectation. To this end, we establish the following Lemma:

\begin{lem}\label{bounded}
	For $(X, Y)$ centred multivariate normal distributed and $f,g \in {\mathcal C}^{2}$, bounded and with two bounded derivatives we have   
\[ \frac{d}{d\Cov(X,Y)} \E\left( f\left(X\right) g \left(Y\right) \right) = \E\left( f'\left(X\right) g'\left(Y\right) \right)\]
\[ \frac{d}{d\Var(X)} \E\left( f\left(X\right) g \left(Y\right) \right) =\frac{1}{2} \E\left( f''\left(X\right) g\left(Y\right) \right)\]
\[ \frac{d}{d\Var(Y)} \E\left( f\left(X\right) g \left(Y\right) \right) =\frac{1}{2} \E\left( f\left(X\right) g''\left(Y\right) \right)\].
\end{lem}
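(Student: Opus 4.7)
This is Price's theorem for bivariate normal expectations, and in keeping with the paper's pervasive use of Gaussian integration by parts I plan to derive it that way rather than through the Fokker--Planck characterization of the Gaussian density. First I would fix notation by writing $t' = \Var(X)$, $t'' = \Var(Y)$, $t = \Cov(X,Y)$ and, in the nondegenerate case $\det\Sigma > 0$, represent
\[
X = \sqrt{t'}\, Z_1, \qquad Y = \tfrac{t}{t'}\, X + \beta\, Z_2, \qquad \beta = \sqrt{t''-t^2/t'},
\]
with $Z_1, Z_2$ iid $N(0,1)$. The mapping $(t, t', t'') \mapsto \E[f(X) g(Y)]$ is then smooth on the open region $\det\Sigma > 0$, and the hypothesis that $f, g$ are bounded with two bounded derivatives, combined with the Gaussian tails of $Z_1, Z_2$, makes differentiation under the expectation immediate by dominated convergence. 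The degenerate boundary case $\det\Sigma = 0$ is handled separately by continuity, as both sides of each identity extend continuously in $\Sigma$.

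For the covariance derivative, I would compute $\partial_t X = 0$ and $\partial_t Y = X/t' - t Z_2/(t'\beta)$, so that the chain rule yields
\[
\partial_t\E[f(X) g(Y)] \;=\; \frac{1}{t'}\E[f(X) g'(Y) X] \;-\; \frac{t}{t'\beta}\,\E[f(X) g'(Y) Z_2].
\]
Applying Stein's identity to $Z_1$ and to $Z_2$ produces, respectively,
\[
\E[X f(X) g'(Y)] = t'\E[f'(X) g'(Y)] + t\,\E[f(X) g''(Y)], \qquad \E[Z_2 f(X) g'(Y)] = \beta\,\E[f(X) g''(Y)].
\]
Substituting, the two $(t/t')\E[f(X) g''(Y)]$ contributions cancel exactly, leaving the announced $\partial_t\E[f(X) g(Y)] = \E[f'(X) g'(Y)]$.

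The two variance derivatives follow the same recipe. For $\partial_{t'}$ one computes $\partial_{t'} X = X/(2t')$ and $\partial_{t'} Y = -tX/(2t'^2) + t^2 Z_2/(2t'^2\beta)$, expands $\partial_{t'}\E[f(X)g(Y)]$ by the chain rule into three terms, and reduces each expectation $\E[Xf'(X) g(Y)]$, $\E[X f(X) g'(Y)]$, $\E[Z_2 f(X) g'(Y)]$ to derivative expectations via Stein; the cross terms proportional to $t/t'$ and to $t^2/t'^2$ cancel and only $\tfrac12\E[f''(X) g(Y)]$ survives. The derivative $\partial_{t''}$ is the easiest of the three, since only $\beta$ depends on $t''$ (with $\partial_{t''}\beta = 1/(2\beta)$), and a single application of Stein's identity to $Z_2$ delivers $\tfrac12\E[f(X) g''(Y)]$. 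The only real obstacle is the careful bookkeeping of which products of $t, t', \beta$ cancel after the Stein reductions; there is no analytic subtlety beyond the already-justified differentiation under the expectation.
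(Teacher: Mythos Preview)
Your proof is correct and follows essentially the same approach as the paper: represent $(X,Y)$ as linear functions of two independent standard Gaussians, differentiate under the expectation, apply Gaussian integration by parts to remove the $Z_i$ factors, and observe the cancellations; the degenerate case is handled by continuity in both. The only difference is that you use an explicit Cholesky-type factorization (so all derivatives $\partial_t X$, $\partial_t Y$ are written out and the cancellations checked by hand), whereas the paper uses the symmetric square root $\mathcal{T}^{1/2}$ and groups terms so that the implicit derivatives $\partial_{t_i}\sigma_{j,k}$ recombine into $\partial_{t_i}(\sigma_1^T\sigma_1)$, $\partial_{t_i}(\sigma_2^T\sigma_2)$, $\partial_{t_i}(\sigma_1^T\sigma_2)$, which are just $\partial_{t_i} t_2$, $\partial_{t_i} t_3$, $\partial_{t_i} t_1$.
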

\begin{rem}
Since we are interested in $\tanh$ and its derivatives for $f$ and $g$ the boundedness  assumptions will always be satisfied for the purposes of this paper. They allow us to not  worry about existance of expectations, but can be weakened significantly.
\end{rem}
\begin{proof}
We consider the covariance matrix 
	\[\mathcal{T}=\begin{pmatrix} t_2 & t_1 \\ t_1 & t_3\end{pmatrix},\]
with $t_2,t_3>0$ and $|t_1|< \sqrt{t_2 t_3}$, i.e. we restrict ourselfs to the interior of the set of covariance matrices. The general case follows by continuity.   
	For a pair $Z=(Z_1,Z_2)$ of independent standard Gaussians, we construct $(X,Y) := Z^T \mathcal{T}^{1/2}$. Let $\sigma_1$, $\sigma_2$ denote the column vectors of $\mathcal{T}^{1/2}$. We note that restriction to the interior of the set of covariance matrices garantees existance of derivatives of $\mathcal{T}^{1/2}$ with respect to $t_1,t_2,t_3$, which will be needed in a moment. For now we obtain that
	\[\E\left( f\left(X\right) g \left(Y\right) \right)= \E \left[ f\left( \sigma_1^T Z \right)g\left(\sigma_2^T Z\right)\right].\]
Using Gaussian integration by parts and the shorthand $\partial_{t_i}= \frac{d}{dt_i}$ one quickly checks the following preparatory calculations 
\[ \E \left( Z_1  (\partial_{t_i } \sigma_{1,1}) f'\left(X\right)  g \left(Y\right)  \right)=\E \left(  (\partial_{t_i } \sigma_{1,1}) \sigma_{1,1}  f''\left(X\right) g \left(Y\right)  \right)
+  \E \left(  (\partial_{t_i } \sigma_{1,1})\sigma_{2,1}  f'\left(X\right)    g' \left(Y\right)  \right),\]
 \[ \E  \left( Z_2   (\partial_{t_i } \sigma_{1,2}) f'\left(X\right)  g \left(Y\right)  \right)=\E \left( (\partial_{t_i } \sigma_{1,2}) \sigma_{1,2}  f''\left(X\right)  g \left(Y\right)  \right)
+ \E \left( (\partial_{t_i } \sigma_{1,2})\sigma_{2,2}  f'\left(X\right)  g' \left(Y\right)  \right),\]
\[ \E \left( Z_1 (\partial_{t_i }\sigma_{2,1}) f\left(X\right) g' \left(Y\right)  \right)=\E\left( ( \partial_{t_i }\sigma_{2,1})  \sigma_{1,1} f'\left(X\right)   g' \left(Y\right)  \right)\\
 +\E \left(  (\partial_{t_i }\sigma_{2,1} ) \sigma_{2,1} f\left(X\right) g''\left(Y\right)  \right),\]
\[ \E \left( Z_2 (\partial_{t_i }\sigma_{2,2}) f\left(X\right)  g' \left(Y\right)  \right)=\E \left( (\partial_{t_i }\sigma_{2,2}) \sigma_{1,2} f'\left(X\right)  g' \left(Y\right)  \right)\\
+\E \left( (\partial_{t_i }\sigma_{2,2}) \sigma_{2,2}   f\left(X\right)   g'' \left(Y\right)  \right). \]  
We now compute directly 
\begin{align*} \partial_{t_i}\E \left(   f\left(X\right) g \left(Y\right)  \right) =& \partial_{t_i}\EE_{Z}\left[ f\left( \sigma_1^T Z \right)g\left( \sigma_2^T Z\right)\right] \\
=&  \E \left( Z_1   (\partial_{t_i }\sigma_{1,1})    f'\left(X\right) g \left(Y\right)  \right)+ \E \left(  Z_2 (\partial_{t_i } \sigma_{1,2})   f'\left(X\right) g \left(Y\right)  \right)+\\
&+ \E \left( Z_1  (\partial_{t_i }\sigma_{2,1})  f\left(X\right) g' \left(Y\right)  \right)+\E \left( Z_2 (\partial_{t_i }\sigma_{2,2})   f\left(X\right) g' \left(Y\right)  \right)	\end{align*}
and by our preparation
\[=\frac{1}{2} \E \left( \left( \partial_{t_i } \left( \sigma_{1,1}^2+ \sigma_{1,2}^2 \right)   \right) f''\left(X\right) g \left(Y\right)  \right) +\frac{1}{2}  \E \left( \left( \partial_{t_i } \left( \sigma_{2,2}^2+ \sigma_{2,1}^2 \right)   \right) f\left(X\right) g'' \left(Y\right)  \right) + \]
\[+\E\left(  (\partial_{t_i } (\sigma_{1,1}\sigma_{2,1}+\sigma_{1,2}\sigma_{2,2}))  f'\left(X\right)    g' \left(Y\right)  \right),\]
which simply is 
\[=\frac{1}{2}\E \left( \left( \partial_{t_i } t_2    \right) f''\left(X\right) g \left(Y\right)  \right)
+\frac{1}{2}\E \left( \left( \partial_{t_i } t_3   \right) f\left(X\right) g'' \left(Y\right)  \right)+ \E \left(  (\partial_{t_i } t_1)  f'\left(X\right)    g' \left(Y\right)  \right),\]
since $\mathcal{T}^{1/2}$ is symmetric. The claim follows by choosing $i= 1,2,3$. 
\end{proof}

\begin{cor}\label{Lipschitz}
	The functions $\psi:[-1,1]\times [0,1]^2\to[-1,1]$ and $\chi:[0,1]\to[-1,1]$ (and their partial derivatives) are Lipschitz.
\end{cor}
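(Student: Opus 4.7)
My plan is to reduce the corollary to bounded derivatives on a convex domain, using Lemma~\ref{bounded} twice.

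First I would rewrite the quantities of interest as Gaussian expectations of bounded, infinitely differentiable functions. Write
\[
\psi(t,t',t'') = \EE\bigl[F(X,Y)\bigr], \qquad F(x,y)=\tanh(h+\beta x)\tanh(h+\beta y),
\]
where $(X,Y)$ is centered Gaussian with covariance $\begin{pmatrix}t' & t\\ t & t''\end{pmatrix}$, and likewise $\chi(t) = \EE[\tanh(h+\beta\sqrt{t}\,Z)]$. The crucial structural remark is that every derivative of $\tanh$ is a polynomial in $\tanh$ and hence is bounded on $\R$; the same is then true for every partial derivative of $F$.

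Second, on the interior $\{(t,t',t''): t',t''>0,\ |t|<\sqrt{t't''}\}$, Lemma~\ref{bounded} (applied with the covariance matrix $\beta^2\mathcal T$, so that the variables $X=\beta Z'$, $Y=\beta Z''$ produce the chain-rule factors $\beta^2$) yields the explicit formulas
\[
\partial_t\psi = \beta^2\EE\bigl[\tanh'(h+\beta Z')\tanh'(h+\beta Z'')\bigr],
\]
\[
\partial_{t'}\psi = \tfrac{\beta^2}{2}\EE\bigl[\tanh''(h+\beta Z')\tanh(h+\beta Z'')\bigr],
\]
and symmetrically for $\partial_{t''}\psi$. Each integrand is bounded uniformly in $(t,t',t'')$, so the three partial derivatives are uniformly bounded on the interior and extend continuously to the closure. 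This gives the Lipschitz property of $\psi$ itself, provided the domain is convex.

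Third, I would iterate: each of the above partial derivatives is again an expectation of a smooth bounded function of the same Gaussian pair, so Lemma~\ref{bounded} applies a second time and produces formulas for the second partials of $\psi$ as expectations of higher derivatives of $\tanh$ — again uniformly bounded. Consequently the first partial derivatives of $\psi$ are themselves Lipschitz on the interior (hence on the closure by continuity). For $\chi$ the analogous one-dimensional calculation gives $\chi'(t)=\tfrac{\beta^2}{2}\EE[\tanh''(h+\beta\sqrt{t}Z)]$ and $\chi''(t)=\tfrac{\beta^4}{4}\EE[\tanh^{(4)}(h+\beta\sqrt{t}Z)]$, both uniformly bounded on $[0,1]$.

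The only genuinely nonroutine step, and the one I would flag as the \emph{main obstacle}, is the passage from bounded derivatives on the interior to a global Lipschitz bound on the closed domain of $\psi$, which is $D:=\{(t,t',t'')\in\R\times[0,1]^2:|t|\le\sqrt{t't''}\}$. For this I would verify that $D$ is convex: given two points in $D$, the triangle inequality bounds the midpoint's $t$-coordinate by $\tfrac12(\sqrt{t_1't_1''}+\sqrt{t_2't_2''})$, and Cauchy--Schwarz gives
\[
\bigl(\sqrt{t_1't_1''}+\sqrt{t_2't_2''}\bigr)^2 \le (t_1'+t_2')(t_1''+t_2''),
\]
so the midpoint still satisfies the covariance constraint. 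Convexity of $D$ together with the uniform bounds on the first and second partials on the interior upgrades continuity to a uniform Lipschitz estimate on all of $D$, completing the proof.
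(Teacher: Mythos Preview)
Your proposal is correct and follows the paper's own approach, which is simply to invoke Lemma~\ref{bounded} together with the fact that all derivatives of $\tanh$ are bounded. You have in fact been more careful than the paper by explicitly verifying convexity of the domain $\{|t|\le\sqrt{t't''}\}$ via Cauchy--Schwarz, a point the paper leaves implicit.
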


\begin{proof}
It is a direct consequence of Lemma \ref{bounded} since all derivatives of $\tanh$ are bounded.
\end{proof}

\begin{cor}\label{conc-psi}
	For $\beta, h\in\R$, $k,k'\in\N$, there exists $C\defi C_{k,k',\beta, h}>0$ such that 
	\[	\E\left[\left( \psi\left(q^{(k,k')}_N,q^{(k)}_N, q^{(k')}_N\right) - \E\left[\psi\left(q^{(k,k')}_N,q^{(k)}_N, q^{(k')}_N\right)\right] \right)^2\right]\le \frac{C}{\sqrt{N}}\]
	and
	\[	\E\left[\left( \chi\left(q^{(k)}_N\right) - \E\left[\chi\left(q^{(k)}_N\right)\right] \right)^2\right]\le \frac{C}{\sqrt{N}}. \]
\end{cor}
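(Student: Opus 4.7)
The plan is to combine the Lipschitz regularity of $\psi$ and $\chi$, established in Corollary \ref{Lipschitz}, with the $L^2$-concentration of the scalar products of iterates provided by Lemma \ref{q-conc}. Intuitively, since the random scalars $q^{(k,k')}_N$ have variance $O(1/\sqrt N)$, any Lipschitz function of them concentrates at the same rate, and the Corollary should follow directly from these two inputs.

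Concretely, for the $\psi$-bound I would exploit the elementary variance inequality
\[
\E\left[(X - \E X)^2\right] \le \E\left[(X - c)^2\right],
\]
valid for any deterministic constant $c$, applied to
\[
X := \psi\left(q^{(k,k')}_N, q^{(k)}_N, q^{(k')}_N\right), \qquad c := \psi\left(\E q^{(k,k')}_N, \E q^{(k)}_N, \E q^{(k')}_N\right).
\]
By the Lipschitz property of $\psi$ (with constant $L = L_{\beta, h}$ independent of $N$) and $(a+b+c)^2 \le 3(a^2+b^2+c^2)$,
\[
\E\left[(X - c)^2\right] \le 3L^2 \Big(\E[(q^{(k,k')}_N - \E q^{(k,k')}_N)^2] + \E[(q^{(k)}_N - \E q^{(k)}_N)^2] + \E[(q^{(k')}_N - \E q^{(k')}_N)^2]\Big),
\]
and three applications of Lemma \ref{q-conc} conclude. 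The $\chi$-bound follows by exactly the same argument with $X = \chi(q^{(k)}_N)$ and $c = \chi(\E q^{(k)}_N)$, using only a single instance of Lemma \ref{q-conc}.

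I do not anticipate a genuine obstacle: this is essentially a one-line deduction from the two ingredients above. Two minor points worth checking: first, that the Lipschitz constants coming from Corollary \ref{Lipschitz} are uniform in $N$, which is clear since they only depend on uniform bounds on $\tanh$ and its low-order derivatives through $\beta, h$; second, that the triple $(q^{(k,k')}_N, q^{(k)}_N, q^{(k')}_N)$ almost surely lies in the natural domain $\{|t|\le\sqrt{t'\,t''}\}$ of $\psi$, which holds deterministically by the Cauchy--Schwarz inequality applied to the iterates. Hence no truncation or boundary argument is required.
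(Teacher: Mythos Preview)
Your proposal is correct and follows essentially the same approach as the paper: both arguments reduce the variance of $\psi$ (resp.\ $\chi$) to the variances of the $q^{(\cdot,\cdot)}_N$ via the Lipschitz bound of Corollary~\ref{Lipschitz} and then invoke Lemma~\ref{q-conc}. Your use of the elementary inequality $\E[(X-\E X)^2]\le \E[(X-c)^2]$ with $c=\psi(\E q^{(k,k')}_N,\E q^{(k)}_N,\E q^{(k')}_N)$ is in fact slightly cleaner than the paper's version, which splits into two terms via $(a+b)^2\le 2a^2+2b^2$ and then bounds the second term by the first using Jensen's inequality.
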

\begin{proof}
	We decompose
	\begin{align*}
		\E&\left[\left( \psi\left(q^{(k,k')}_N,q^{(k)}_N, q^{(k')}_N\right) - \E\left[\psi\left(q^{(k,k')}_N,q^{(k)}_N, q^{(k')}_N\right)\right] \right)^2\right]\\
		\le& \;2\E\left[\left( \psi\left(q^{(k,k')}_N,q^{(k)}_N, q^{(k')}_N\right) - \psi\left(\E\left[q^{(k,k')}_N\right],\E\left[q^{(k)}_N\right], \E\left[q^{(k')}_N\right]\right)\right) ^2\right]+\\
		&+2\E\left[\left(  \E\left[\psi\left(q^{(k,k')}_N,q^{(k)}_N, q^{(k')}_N\right)\right] - \psi\left(\E\left[q^{(k,k')}_N\right],\E\left[q^{(k)}_N\right], \E\left[q^{(k')}_N\right]\right)\right) ^2\right]
	\end{align*}
	using that $(a+b)^2\le 2a^2 + 2b^2$. Here the first term on the rhs is $O(N^{-1/2})$ by Lemmata~\ref{q-conc} and~\ref{Lipschitz}. The second expectation is bounded by
	the first one
	by linearity of the expectation and by Jensen's inequality. This implies the first assertion. The second assertion follows analogously.
\end{proof}
Since all preparations are in place we proceed to the
\begin{proof}[Proof of Theorem~\ref{q-prop}.]
For~\eqref{e:qprop-psi}, we use that
\begin{align*}
\E&\left[\left(q^{(k,k')}_N-  \psi\left(q^{(k-1,k'-1)}_N,q^{(k-1)}_N, q^{(k'-1)}_N\right)\right)^2\right]\\
\le&\; 9\E\left[\left( \psi(q^{(k,k')}_N,q^{(k)}_N, q^{(k')}_N) - \E\left[\psi(q^{(k,k')}_N,q^{(k)}_N, q^{(k')}_N)\right] \right)^2\right]+9\E\left[\left( q_N^{(k,k')} - \E\left[q_N^{(k,k')}\right]\right)^2\right]+\\
&+ 9\left(	\E\left[q^{(k,k')}_N\right] -  \E\left[\psi\left(q^{(k-1,k'-1)}_N,q^{(k-1)}_N, q^{(k'-1)}_N\right)\right] \right)^2
	\end{align*}
where all terms on the r.h.s. are bounded by a constant times $N^{-1/2}$ by Lemmata~\ref{prop-E}, \ref{q-conc} and Corollary~\ref{conc-psi}. Assertion~\eqref{e:qprop-chi} follows analogously.
\end{proof}

\section{Fixed point analysis}\label{fp-sec}

We recall that $q$ denotes the largest fixed point of $\phi$, cfr. \eqref{e:q}, defined by:
\[\phi: \R^+_0 \rightarrow [0,1], \quad x \mapsto \EE \tanh^2\left( h + \be \sqrt{x} Z\right).\]

\begin{theorem}\label{unif_conv_thrm}
	For $h\neq0$ or $h=0,\; |\beta|\leq 1$ 
	$$ \sup_{x\in \R^+_0} \left| \phi^{(k)}(x)-q\right| \rightarrow 0, \mbox{ as } k\rightarrow \infty. $$ 
	For $h=0,\; |\beta|>1$ and $\delta>0$ 
	$$ \sup_{x\geq \delta} \left| \phi^{(k)}(x)-q\right| \rightarrow 0, \mbox{ as } k\rightarrow \infty. $$ 
\end{theorem}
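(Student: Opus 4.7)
The plan starts with the fixed-point classification from the earlier Lemma~\ref{step1_lem}: in the regime $h\neq 0$ or $h=0,\,|\beta|\leq 1$, the unique fixed point of $\phi$ on $\R^+_0$ is $q$; in the regime $h=0,\,|\beta|>1$ there are two fixed points, the unstable $0$ and the stable $q>0$. Since $\phi(\R^+_0)\subseteq [0,1)$, after one iteration the orbit lies in the compact interval $[0,1]$, so it suffices to prove uniform convergence of $\phi^{(k)}$ to $q$ on $[0,1]$ (respectively $[\delta,1]$ in the low-temperature case).

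The subcase $h=0,\,|\beta|\leq 1$ can be treated directly: using $\tanh^2(u)\leq u^2$ one has $\phi(x)\leq \beta^2 x\leq x$ for $x\geq 0$, with strict inequality when $x>0$. Consequently $\phi^{(k+1)}(x)=\phi(\phi^{(k)}(x))\leq \phi^{(k)}(x)$, so the sequence is monotone nonincreasing in $k$, converges pointwise, and its limit must be the unique fixed point $0=q$. Dini's theorem, applied to the continuous $\phi^{(k)}$ decreasing to the continuous $0$ on the compact $[0,1]$, upgrades this to uniform convergence, and the conclusion then lifts to $\R^+_0$.

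For the remaining cases $\phi$ is \emph{not} globally monotone, and the argument I propose proceeds in two steps. First, a direct computation via Lemma~\ref{bounded} gives
\[\phi'(q)=\beta^2\,\EE\bigl[\mathrm{sech}^2(h+\beta\sqrt{q}Z)\bigl(1-3\tanh^2(h+\beta\sqrt{q}Z)\bigr)\bigr],\]
and one checks $|\phi'(q)|<1$ in each relevant parameter regime, so that a closed neighborhood $J=[q-\varepsilon,q+\varepsilon]$ of $q$ is forward-invariant under $\phi$ and $\phi|_J$ is a strict contraction with rate $\rho<1$, giving $\sup_{x\in J}|\phi^{(k)}(x)-q|\to 0$. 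Second, continuity of $\phi$ together with uniqueness of $q$ as fixed point (on $(0,\infty)$, respectively $(0,\infty)\setminus\{0\}$) forces $\phi(x)>x$ on $(0,q)$ and $\phi(x)<x$ on $(q,\infty)$; by compactness of $[0,1]\setminus J$ this sign property yields a uniform strictly positive drift $|\phi(x)-x|\geq \eta>0$ off $J$, and combining with a uniform bound on $|\phi'|$ on $[0,1]$ to control overshoot, every orbit enters $J$ within $O(1/\eta)$ iterations, \emph{uniformly} in the starting point. In the regime $h=0,\,|\beta|>1$ the restriction $x\geq\delta$ is essential to keep orbits bounded away from the repelling fixed point $0$, to which orbits starting arbitrarily near $0$ would otherwise cling.

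The main obstacle is the overshoot control in the second step: absent monotonicity of $\phi$, a single iterate may jump past $q$ to a point farther from $q$ than the starting value, risking persistent oscillation. The cleanest resolution I see is either to take $\varepsilon$ large enough relative to the $C^1$-size of $\phi$ on $[0,1]$ (so that any one-step overshoot from outside $J$ lands within a slightly enlarged $J$), or equivalently to work with the iterate $\phi^{(2)}$, whose derivative at $q$ is $(\phi'(q))^2$ and thus strictly smaller in modulus; this smooths out oscillations and lets the drift-plus-contraction argument close.
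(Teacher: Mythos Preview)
Your overall architecture --- local contraction near $q$ plus a uniform drift to get there --- matches the paper's, and your Dini argument for $h=0,\ |\beta|\le 1$ is clean (though note Lemma~\ref{step1_lem} already contains this case). The difficulty is that the two steps you label nontrivial really are, and your sketch does not close either.

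\textbf{The claim $|\phi'(q)|<1$.} The upper bound $\phi'(q)<1$ does follow from strict monotonicity of $\phi(x)/x$ (differentiate at $x=q$). The lower bound, however, is not ``just checked'' from the formula you wrote: $\beta^2\EE[\mathrm{sech}^2(1-3\tanh^2)]$ can be as negative as $-\beta^2/3$ pointwise, and no algebraic identity involving $\phi(q)=q$ alone forces it above $-1$. What is needed is precisely the paper's Lemma~\ref{derivest_lemma}, $\phi'(x)>-\phi(x)/(2x)$, which at $x=q$ gives $\phi'(q)>-1/2$. So your Step~1 is correct but already relies on the same analytic input the paper develops.

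\textbf{Overshoot control.} This is a genuine gap, and neither of your proposed fixes works as stated. Enlarging $J$ fails because the global Lipschitz constant of $\phi$ is $\beta^2$, which may exceed $1$, so you cannot enlarge $J$ while keeping $\sup_J|\phi'|<1$. Passing to $\phi^{(2)}$ does shrink the derivative at $q$ but does nothing for overshoot away from $q$, and additionally you would need to exclude period-$2$ orbits. More concretely, the distance $|x-q|$ is \emph{not} monotone under $\phi$: from $x\in[q/2,q)$ one only gets $\phi(x)-q<2(q-x)$ (the paper's Lemma~\ref{phi_est_lemma}), so a single step can roughly double the distance. A pure ``drift $\ge\eta$ plus bounded $|\phi'|$'' argument cannot rule out persistent oscillation across $q$.

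The paper resolves this by first funneling into $[q/2,2q]$ (Lemma~\ref{step1_lem}), then using the precise two-sided estimates of Lemma~\ref{phi_est_lemma} to show that $\phi$ maps the \emph{asymmetric} interval $[q-d,q+2d]$ into itself, and that within a uniformly bounded number of steps the orbit reaches $[q-(1-\varepsilon)d,\,q+2(1-\varepsilon)d]$. The asymmetry $1{:}2$ is exactly calibrated to the overshoot factors ($1/2$ from the right of $q$, up to $2$ from the left), and this is the substantive idea your plan is missing.
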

The proof of Theorem \ref{unif_conv_thrm} proceeds in two main steps. We first show that in a bounded number of steps $\phi^{(k)}$ visits $[q/2,2q]$. We then establish that $\phi^{(k)}$ contracts strongly on this interval. The case $h=0, \; |\beta|\leq 1$ needs a slightly different treatment, but turns out to be much simpler. \\  \vspace{.2cm}
\hspace{6cm}
{  \large \bf The way to $[q/2,2q]$} \\
In this part, we show the following Lemma: 
\begin{lem}\label{step1_lem} 
	For $h\neq 0$: $q>0$ is the unique fixed point of $ \phi$ and we have     
	$$\sup_{x\in \R^+_0}\; \inf\{ k\in\N: \phi^{(k)}_\beta(x) \in [q/2,2q]\} <\infty.$$
	For $h=0,|\beta|>1$: The fixed points of $ \phi$ are $0$ and $q>0$; and for any $\delta>0$ 
	$$\sup_{x\geq \delta}\; \inf\{ k\in\N: \phi^{(k)}_\beta(x) \in [q/2,2q]\} < \infty.$$
	For $h=0, |\beta| \leq 1$: $q=0$ is the unique fixed point of $ \phi$ and we have 
	$$ \sup_{x\in \R^+_0} \left| \phi^{(k)}(x)-q\right| \rightarrow 0, \mbox{ as } k\rightarrow \infty. $$ 
\end{lem}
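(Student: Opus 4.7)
The plan is to analyze the one-dimensional map $\phi:[0,\infty)\to[0,1)$ via its basic analytic properties (continuity, boundedness, monotonicity) together with a sign/compactness analysis of $F:=\phi-\mathrm{id}$. Monotonicity of $\phi$ I would establish up front by the layer-cake identity
\[\phi(x)=1-\int_0^1 \PP\bigl(|h+\beta\sqrt{x}Z|<y_t\bigr)\,dt,\]
where $\{\mathrm{sech}^2>t\}=\{|\cdot|<y_t\}$; each probability in the integrand decreases in $x$ by a direct unimodality/spreading argument on the Gaussian $h+\beta\sqrt{x}Z$.

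For the case $h=0,|\beta|\le 1$, the elementary estimate $\tanh^2 y\le y^2$ (strict for $y\ne 0$) gives $\phi(x)\le\beta^2 x\le x$ on $[0,\infty)$, whence $0$ is the unique fixed point; when $|\beta|<1$ the same bound yields the uniform geometric decay $\phi^{(k)}(x)\le\beta^{2k}$, and when $|\beta|=1$ the monotonicity of $\phi$ combined with $\phi(x)<x$ on $(0,1]$ forces $\phi^{(k)}(x)\le\phi^{(k)}(1)\downarrow 0$ (the monotone limit being a fixed point), which is uniform convergence since $\phi([0,\infty))\subseteq[0,1]$. For $h\ne 0$ and for $h=0,|\beta|>1$, existence of a positive fixed point $q$ follows from the intermediate value theorem applied to $F$: in the first case $F(0)=\tanh^2 h>0$ and $F(x)\to-\infty$, and in the second $F'(0)=\beta^2-1>0$ while $F(x)\to-\infty$. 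Once uniqueness of $q$ is in hand, the hitting-time statement follows from monotonicity and compactness: the quantity $\varepsilon:=\min_{x\in[0,q/2]}F(x)$ is strictly positive by continuity and the fact that $F>0$ on $[0,q)$, so any orbit starting in $[0,q/2]$ advances by at least $\varepsilon$ per step (remaining below $q$ since $\phi([0,q])\subseteq [0,q]$) and enters $(q/2,q]\subseteq[q/2,2q]$ within $\lceil q/(2\varepsilon)\rceil$ steps; a symmetric compactness argument on $[2q,1]$ handles orbits from above (vacuous when $2q\ge 1$), and for $h=0,|\beta|>1$ the bound $\phi^{(k)}(x)\ge\phi^{(k)}(\delta)$ for $x\ge\delta$ prevents collapse to the trivial fixed point.

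The main obstacle is the uniqueness of the positive fixed point $q$. A naive concavity approach fails: a direct computation gives $\phi''(x)=\tfrac{\beta^4}{2}\E[G''(h+\beta\sqrt{x}Z)]$ with $G''(y)=4\,\mathrm{sech}^2 y\,(15\tanh^2 y\,\mathrm{sech}^2 y-2)$, whose sign is indefinite. My plan is instead to argue uniqueness through the variational potential
\[\mathcal P(q)=\log 2+\E\log\cosh(h+\beta\sqrt{q}Z)+\tfrac{\beta^2}{4}(1-q)^2,\]
whose derivative $\mathcal P'(q)=\tfrac{\beta^2}{2}(q-\phi(q))$ vanishes exactly at fixed points of $\phi$, combined with an analysis of the boundary signs of $\mathcal P'$ and the alternation of critical-point types of $\mathcal P$ along $[0,1]$. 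Showing that $F$ can change sign at most once on $(0,\infty)$ in each of the two regimes is the subtlest technical step and the place where the proof stands or falls.
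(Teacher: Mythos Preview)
Your monotonicity claim for $\phi$ is false when $h\neq 0$, and this breaks the argument. Differentiating and integrating by parts gives
\[
\phi'(x)=\beta^2\,\EE\bigl[\mathrm{sech}^2(h+\beta\sqrt{x}Z)\,(1-3\tanh^2(h+\beta\sqrt{x}Z))\bigr],
\]
which is negative whenever $\tanh^2 h>1/3$ and $x$ is small (take e.g.\ $h=1$, $x\downarrow 0$). The layer--cake step fails at exactly the same place: for $|h|>y_t$ the map $\sigma\mapsto \PP(|N(h,\sigma^2)|<y_t)$ vanishes at $\sigma=0$, is positive for $\sigma>0$, and tends to $0$ as $\sigma\to\infty$, so it is not monotone. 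Without monotonicity you lose both the containment $\phi([0,q])\subseteq[0,q]$ in your ``from below'' step and the control on where an orbit exiting $[2q,1]$ lands; a priori it could oscillate between $[0,q/2]$ and $(2q,1]$. (For $h=0$ your monotonicity argument does go through, since $z\mapsto z\tanh(sz)\mathrm{sech}^2(sz)$ is nonnegative; so your treatment of $h=0,|\beta|\le 1$ and the lower barrier for $h=0,|\beta|>1$ are fine.)

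The uniqueness part is also not yet a proof: introducing $\mathcal P$ with $\mathcal P'=\tfrac{\beta^2}{2}(q-\phi(q))$ merely restates the fixed--point equation, and ``alternation of critical--point types'' needs sign information on $\mathcal P''$ that runs into the same indefiniteness you already noted for $\phi''$. The paper sidesteps both issues. For uniqueness it uses that $x\mapsto\phi(x)/x$ is \emph{strictly decreasing} on $(0,\infty)$ (Talagrand, Prop.~A.14.1, applied with $\varphi=\tanh$), which immediately gives a single positive solution of $\phi(x)/x=1$ and the sign pattern $\phi(x)>x$ on $(0,q)$, $\phi(x)<x$ on $(q,\infty)$. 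For the overshoot problem it does not use monotonicity of $\phi$ at all but instead the one--sided bound
\[
\phi([q,\infty))\subset[q/2,1],
\]
proved by splitting $\EE[\tanh^2(h+\beta\sqrt{x}Z)]$ over $\{Z>0\}$ and $\{Z<0\}$ and comparing to $x=q$. With this, an orbit leaving $[2q,1]$ is forced into $[q/2,2q)$, and the hitting--time statement follows from your compactness step.
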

The first step to establish Lemma \ref{step1_lem} is to prove the following monotonicity property. 
\begin{lem}\label{semi_monotonicity_lemma}
	For $h\neq 0$ the fixed point $q$ is unique and strictly positive; and we have 
	$$ \phi(x) > x \mbox{ if } x\in [0,q) \quad \mbox{ and } \quad \phi(x) < x \mbox{ if } x>q $$ 
	For $h=0, |\beta|>1$ the fixed points of $ \phi$ are $0$ and $q>0$; for $h=0, |\beta|\leq 1$ the only fixed point is $q=0$ and in either case we have 
	$$ \phi(x) > x \mbox{ if }x\in(0,q) \quad \mbox{ and } \quad \phi(x) < x \mbox{ if } x>q $$ 
\end{lem}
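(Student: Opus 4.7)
My plan is to reduce the entire lemma to the single claim that $\phi(x)/x$ is strictly decreasing on $(0,\infty)$; the three cases then follow by elementary limit analysis and the intermediate value theorem. By the evenness of $\tanh^2$, the function $\phi$ is invariant under $h\mapsto -h$ and $\beta\mapsto -\beta$, so I assume $\beta, h\ge 0$ throughout.

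Setting $H(x):=\phi(x)-x\phi'(x)$ so that $(\phi/x)'(x)=-H(x)/x^2$, my task is to show $H(x)>0$ for $x>0$. With $\sigma:=\beta\sqrt{x}$ and $y:=h+\sigma Z$, the heat-kernel identity $\phi'(x)=\beta^2\EE[\cosh^{-2}(y)(1-3\tanh^2(y))]$ combined with one Gaussian integration by parts gives $x\phi'(x)=\sigma\EE[Z\tanh(y)\cosh^{-2}(y)]=\EE[(y-h)\tanh(y)\cosh^{-2}(y)]$. Substituting into $H$ and algebraically splitting $(y-h)=y-h$ yields
\begin{equation*}
H(x)\;=\;\EE[\xi(y)]\;+\;h\,G(h),
\end{equation*}
where $\xi(y):=\tanh(y)\bigl(\tanh(y)-y\cosh^{-2}(y)\bigr)$ and $G(h):=\EE[\tanh(y)\cosh^{-2}(y)]$.

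The next step is to verify positivity of both terms. For $\xi$, let $g(y):=\tanh(y)-y\cosh^{-2}(y)$; a direct computation gives $g(0)=0$ and $g'(y)=2y\tanh(y)\cosh^{-2}(y)$, which is $\ge 0$ with equality iff $y=0$. Hence $g$ is strictly increasing through $g(0)=0$, so $g(y)$ has the same sign as $y$; multiplying by $\tanh(y)$, which also has the same sign as $y$, yields $\xi(y)\ge 0$ with equality iff $y=0$. For $G(h)$, using that $\tanh\cdot\cosh^{-2}$ is odd and writing $p$ for the density of $y\sim N(h,\sigma^2)$, I rewrite
\begin{equation*}
G(h)\;=\;\int_0^\infty \tanh(y)\cosh^{-2}(y)\,\bigl[p(y)-p(-y)\bigr]\,dy.
\end{equation*}
For $y, h\ge 0$ one has $(y-h)^2\le (y+h)^2$, so $p(y)\ge p(-y)$; since the factor $\tanh(y)\cosh^{-2}(y)$ is also $\ge 0$ on $[0,\infty)$, the integrand is non-negative and $G(h)\ge 0$. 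For any $x>0$ the random variable $y$ has a non-degenerate density, so $\EE\xi(y)>0$, and hence $H(x)>0$.

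The strict decrease of $\phi(x)/x$ on $(0,\infty)$ together with the limits $\phi(x)/x\to 0$ as $x\to\infty$ (since $\phi\le 1$), $\phi(x)/x\to+\infty$ as $x\to 0^+$ when $h\ne 0$ (since $\phi(0)=\tanh^2(h)>0$), and $\phi(x)/x\to\beta^2$ as $x\to 0^+$ when $h=0$ (by the Taylor expansion $\phi(x)=\beta^2 x+O(x^2)$) delivers the three cases of the lemma: a unique positive fixed point $q$ whenever $h\ne 0$ or $(h=0,|\beta|>1)$, and no positive fixed point when $h=0$, $|\beta|\le 1$; in each case the sign of $\phi-\mathrm{id}$ on either side of $q$ is read off directly from $\phi(x)/x$ being strictly above, resp.\ below, $1$. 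The main obstacle is the decomposition $H(x)=\EE\xi(y)+hG(h)$ together with the symmetry argument for $G(h)\ge 0$: the naive pointwise claim $\tanh(y)(\tanh(y)-(y-h)\cosh^{-2}(y))\ge 0$ is actually false (it can fail for small $y$ of the opposite sign to $h$), so isolating the mean-of-odd-function term $h\,G(h)$ is essential to expose positivity only in expectation.
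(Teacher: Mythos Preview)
Your argument is correct. Both you and the paper reduce the lemma to the single fact that $\phi(x)/x$ is strictly decreasing on $(0,\infty)$, and the case distinctions are then read off from the limits at $0^+$ and $+\infty$ in the same way (the paper obtains the $h=0$ limit $\beta^2$ via L'H\^opital and Gaussian partial integration, you via the Taylor expansion $\tanh^2(u)=u^2+O(u^4)$). The genuine difference is in how the monotonicity of $\phi(x)/x$ is established: the paper simply cites Proposition~A.14.1 of Talagrand~\cite{T2} (applied with $\varphi=\tanh$) and then scales by $\beta^2$, whereas you give a self-contained proof via the decomposition $H(x)=\EE[\xi(y)]+h\,G(h)$ with $\xi\ge 0$ pointwise and $G(h)\ge 0$ by a symmetry argument on the Gaussian density. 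Your route buys independence from the external reference and makes the mechanism transparent (in particular your remark that the naive pointwise positivity fails and one must isolate the odd part is a nice observation); the paper's route is shorter. One small cosmetic point: your sentence ``for any $x>0$ the random variable $y$ has a non-degenerate density'' tacitly assumes $\beta\neq 0$; for $\beta=0$ one has $y\equiv h$ deterministic, but then either $h\neq 0$ and $\xi(h)>0$ directly, or $h=0$ and $\phi\equiv 0$, so the statement is trivial.
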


\begin{proof}
We refer to the function $\phi$ for $\beta=1$ by $\phi_1$. By Proposition A.14.1. of \cite{T2} for $\varphi=\tanh$ we have that $\phi_1(x)/x$ is a strictly decreasing function on $\R^+$. Setting $x=\beta^2 y$ for $\beta\neq 0$ we obtain the same monotonicity for $ \phi(y)/y$. For $h\neq 0$ we have $ \phi(0)>0$, hence $ \phi(x)/x =1$ has a unique solution $q>0$ and $ \phi(x)/x < \phi(q)/q = 1$ if and only if $ x>q$. For $h=0$ we have $ \phi(0)=0$. If $\lim_{x\rightarrow 0^+} \phi(x)/x \leq 1$, then $ \phi(x)/x< 1$ for $x>0$ and $q=0$ is the only fixed point. Otherwise  $\lim_{x\rightarrow 0^+} \phi(x)/x > 1$, then $q>0$ and the claim follows for $x>0$ the same way it did in the $h\neq 0$ case.  Finally we verify the transition for $h=0$ at $\beta=1$: By L'Hospital rule $\lim_{x\rightarrow 0^+} \phi(x)/x = \lim_{x\rightarrow 0^+}\phi'(x)=\beta^2$, since  
	$$ \phi'(x) = \beta \EE_Z\left[\frac{Z}{2\sqrt{x}} (\tanh^2)' (\beta\sqrt{x}Z)\right] = \frac{\beta^2}{2} \EE_Z\left[(\tanh^2)''(\beta\sqrt{x}Z)\right]$$
	and $(\tanh^2)''(0) = 2(1-4\tanh^2(0)+3\tanh^4(0))= 2$.
\end{proof}
We proceed to work out the consequences of such monotonicity. 
\begin{lem}\label{fin_steps_lem}
	Let $I\subset \R$ be a compact interval and $f: I \rightarrow \R$ a continuous function with either $f(x)>x$ for all $x\in I$ or $f(x)<x$ for all $x\in I$, then  
	$$ \sup_{x\in I}\;\inf\{k\in \N: f^{(k)}(x) \not\in I\}<\infty. $$
\end{lem}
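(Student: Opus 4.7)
The plan is to exploit compactness to turn the pointwise condition $f(x)\neq x$ on $I$ into a uniform one, and then observe that a constant one-sided drift forces exit from a bounded interval in uniformly bounded time.

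First I would reduce to one case: by symmetry (replace $f$ by $2x-f(x)$ if needed, or simply argue dually), assume $f(x)>x$ for all $x\in I$. Write $I=[a,b]$ with $a<b$. Since $g(x):=f(x)-x$ is continuous on the compact interval $I$ and strictly positive, by the extreme value theorem
\[
\delta:=\min_{x\in I}(f(x)-x)>0.
\]
This is the only place compactness is used, and it converts the hypothesis into a uniform lower bound on the per-step increment.

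Next I would track the orbit of an arbitrary $x\in I$. Fix $x$ and let $K(x):=\inf\{k\in\N:f^{(k)}(x)\notin I\}$, with the convention that once the orbit leaves $I$ we stop iterating (any subsequent $f^{(k)}(x)$ is declared outside $I$, which is the natural reading of the claim). As long as $f^{(0)}(x),\ldots,f^{(k-1)}(x)$ all lie in $I$, the uniform bound gives
\[
f^{(j+1)}(x)-f^{(j)}(x)=g\!\left(f^{(j)}(x)\right)\ge \delta,\qquad j=0,1,\ldots,k-1,
\]
and summing yields $f^{(k)}(x)\ge x+k\delta\ge a+k\delta$. Choosing $k_0:=\lceil(b-a)/\delta\rceil+1$, we see that if all of $f^{(0)}(x),\ldots,f^{(k_0-1)}(x)$ were to belong to $I$, then $f^{(k_0)}(x)\ge a+k_0\delta>b$, contradicting $f^{(k_0)}(x)\in I$. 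Hence $K(x)\le k_0$, and since $k_0$ depends only on $I$ and $\delta$ (not on $x$), this gives the desired uniform bound
\[
\sup_{x\in I}K(x)\le k_0<\infty.
\]

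There is no real obstacle here: the argument is a one-line compactness observation followed by a telescoping sum. The only point to be slightly careful about is the meaning of $f^{(k)}(x)$ once the orbit has left $I$ (since $f$ is only defined on $I$), which is handled by the convention above — equivalently, replacing $\inf$ over $\N$ with the stopping time of leaving $I$.
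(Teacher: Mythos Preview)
Your proof is correct and essentially identical to the paper's: both use compactness to extract a uniform lower bound $\delta=\min_{x\in I}(f(x)-x)>0$ and then bound the exit time by $\mathrm{diam}(I)/\delta$. Your remark about the convention for $f^{(k)}(x)$ once the orbit has left $I$ is a nice clarification the paper leaves implicit.
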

\begin{proof}
	By continuity of $f$, and compactness of $I$ we have that $x\mapsto f(x)-x$ attains its minimum on $I$. Using that $f(x)>x$ said minimum is strictly positive. Therefore it takes at most $\text{ \it diam}(I)(\min_{y\in I}\left( f(y)-y)\right)^{-1}<\infty$ many steps for $f^{(k)}(x)$ to leave $I$. The case $f(x)<x$ follows by considering $-I$ and $-x$.    
\end{proof}
We finish preparations for the proof of Lemma \ref{step1_lem} by establishing the following paculiar property of $ \phi$. 

\begin{lem}\label{one_last_trick}
	$$ \phi([q,\infty)) \subset [q/2,1]$$ 
\end{lem}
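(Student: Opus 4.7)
The upper bound $\phi(x)\le 1$ is immediate from $\tanh^2\le 1$, since the expected value of a function bounded by $1$ is at most $1$. The substantive content is the lower bound $\phi(x)\ge q/2$ on $[q,\infty)$.

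My first approach is to show $\phi$ is non-decreasing on $[q,\infty)$, so that $\phi(x)\ge \phi(q)=q\ge q/2$. When $h=0$ this is immediate: $\tanh^2$ is even and increasing on $[0,\infty)$, so $x\mapsto \tanh^2(\beta\sqrt{x}z)$ is non-decreasing on $[0,\infty)$ for every fixed $z$, and monotonicity of $\phi$ follows by monotone convergence.

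For $h\ne 0$ pointwise monotonicity fails, and in fact $\phi$ need not be globally monotone. My plan is to combine three facts: (i) $\phi(q)=q$; (ii) the ratio $y\mapsto\phi(y)/y$ is strictly decreasing on $\R^+$ by Lemma \ref{semi_monotonicity_lemma}, so in particular $\phi(y)\le y$ for $y\ge q$; and (iii) $\phi(y)\to 1$ as $y\to\infty$ by dominated convergence, since $|h+\beta\sqrt{y}Z|\to\infty$ almost surely when $\beta\ne 0$. Together these constrain the graph of $\phi$ on $[q,\infty)$ to pass through $(q,q)$, to stay under the diagonal, and to approach $1$ at infinity. The topological picture is that a hypothetical dip $\phi(x_0)<q/2$ at some $x_0>q$ would force $\phi$ to descend from $q$, stay below $q/2$ over some interval, and then climb back up towards $1$; this re-ascent must conflict with the monotonicity of $\phi(y)/y$, since on the re-ascent part the ratio $\phi(y)/y$ would grow until $\phi$ meets the diagonal.

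The main obstacle is making this picture quantitatively rigorous for generic $h$. The hard part will be to rule out shallow dips where the ratio stays below $1$ throughout. I see two concrete routes: first, use the Lipschitz estimate $|\phi'(x)|\le \tfrac{\beta^2}{2}\sup_u |(\tanh^2)''(u)|=\beta^2$ (from differentiation under the expectation and the identity $\phi'(x)=\tfrac{\beta^2}{2}\EE[(\tanh^2)''(h+\beta\sqrt x Z)]$) to bound the slope of any dip starting from $\phi(q)=q$, and combine with the decomposition $h+\beta\sqrt x Z\stackrel d= h+\beta\sqrt q Z_1+\beta\sqrt{x-q}Z_2$ to recast $\phi(x)=\EE[P_{x-q}\tanh^2(\cdot)](h+\beta\sqrt q Z_1)$ with $P_t$ the Gaussian convolution semigroup, exploiting its smoothing on the $N(h,\beta^2 q)$-law; alternatively, bypass monotonicity entirely and estimate directly
\[\phi(y)\ge \EE\!\left[\tanh^2(h+\beta\sqrt y Z)\,\mathbf 1\{|h+\beta\sqrt y Z|\ge c\}\right]\ge \tanh^2(c)\,\PP(|h+\beta\sqrt y Z|\ge c)\]
and choose $c$ so that the right-hand side dominates $q/2$ uniformly in $y\ge q$ via standard Gaussian tail bounds.
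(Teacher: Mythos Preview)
Your proposal does not yet contain a proof for $h\ne 0$: you correctly identify that global monotonicity of $\phi$ fails, and you then sketch two routes, but neither is completed and it is not clear that either can be. The topological picture you describe does not close up: a dip below $q/2$ followed by a climb back to $1$ is \emph{not} excluded by $\phi(y)/y$ being decreasing, since $\phi$ can increase while $\phi/y$ decreases (you note this yourself). Your second route, bounding $\phi(y)\ge \tanh^2(c)\,\PP(|h+\beta\sqrt y Z|\ge c)$, reduces at $y=q$ to finding $u\in(0,1)$ with $u\,\PP(X\ge u)\ge q/2$ for $X=\tanh^2(h+\beta\sqrt q Z)$; in general for a $[0,1]$-valued $X$ with mean $q$ one only has $\sup_u u\,\PP(X\ge u)\ge q/(1+\ln(1/q))$-type bounds, which can fall below $q/2$ for $q$ close to $1$. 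So this route would need further structure of $X$ to succeed, and you have not supplied it.

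The paper's argument is a one-line trick that bypasses all of this. Assume $h,\beta\ge 0$ by symmetry. On the event $\{Z>0\}$ the argument $h+\beta\sqrt x\,Z$ is nonnegative and increasing in $x$, so $x\mapsto \tanh^2(h+\beta\sqrt x\,Z)$ is monotone there. Hence for $x\ge q$,
\[
\phi(x)\;\ge\;\EE\!\left[\tanh^2(h+\beta\sqrt x\,Z)\,\mathbf 1_{Z>0}\right]\;\ge\;\EE\!\left[\tanh^2(h+\beta\sqrt q\,Z)\,\mathbf 1_{Z>0}\right].
\]
But the $\{Z>0\}$ half of $\phi(q)=q$ dominates the $\{Z<0\}$ half (pair $Z\leftrightarrow -Z$ and use that $\tanh^2(h+u)\ge\tanh^2(h-u)$ for $h,u\ge 0$), so it is at least $q/2$. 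The missing idea in your attempt is precisely this: restrict to the half-line $\{Z>0\}$ where monotonicity in $x$ \emph{does} hold, and observe that this half already accounts for at least half of $\phi(q)$.
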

\begin{proof}
	Clearly since $|\tanh|\leq 1$ we have $ \phi(x)\leq 1$ by construction. Due to symmetries we can assume $h,\beta\geq 0$ without loss of generality. Using symmetry and monotonicity of $\tanh^2$ and the Gaussian density around $0$ 
	$$ \EE[\tanh^2(h+\beta \sqrt{q}Z) \mathbf{1}_{Z>0}] \geq \EE[\tanh^2(h+\beta \sqrt{q}Z) \mathbf{1}_{Z<0}].$$
	Hence for $x\geq q$
	$$ \frac{q}{2} = \frac{1}{2} \phi(q) \leq  \EE[\tanh^2(h+\beta \sqrt{q}Z) \mathbf{1}_{Z>0}]\leq  \EE[\tanh^2(h+\beta \sqrt{x}Z) \mathbf{1}_{Z>0}]\leq \phi(x).$$
\end{proof}

\begin{proof}[Proof of Lemma \ref{step1_lem}] 
	We first tackle the cases where $q>0$. Without loss of generality we assume $x\in [0,1]$, since otherwise we may consider $ \phi^{(k)}(x)$ as $ \phi^{(k-1)}( \phi(x))$, i.e. shifting the iteration index by one and starting in $ \phi(x)\leq 1$ instead of $x$.  By Lemmata \ref{semi_monotonicity_lemma} and \ref{fin_steps_lem} the number of steps needed for $\phi^{(k)}$ to leave $[0,q/2]$ for $h\neq 0$ or $[\delta,q/2]$, for $h=0, |\beta|>1$, $0<\delta<q/2$ is uniformly bounded. After these steps we are either on $[q/2,2q]$, in which case there is nothing to prove, or we are on $(2q,1]$. By Lemmata \ref{semi_monotonicity_lemma} and \ref{fin_steps_lem} the iteration leaves $[2q,1]$ in a uniformly bounded number of steps. Finally applying Lemma \ref{one_last_trick} yields the claim, since the last point in $[2q,1]$ before leaving the interval is mapped to $[q/2,2q)$. Finally for $h=0, \beta \leq 1$ we have $q=0$, hence by Lemma \ref{semi_monotonicity_lemma} the sequence $\phi^{(k)}(x)$ is monotonically decreasing regardless of the starting value. Applying Lemma \ref{fin_steps_lem} we obtain that for any $\delta>0$ the sequence $\phi^{(k)}(x)$ exits $[\delta,1]$ in a bounded number of steps, uniformly over all starting points $x\in [0,1]$. The claim follows.     
\end{proof}

\hspace{3cm}
{  \large \bf Strong contractivity on $[q/2,2q]$} \\

To finish the proof of Theorem \ref{unif_conv_thrm} we use the reduction to $[q/2,2q]$ established in the previous Section and then argue on this interval similarly to the Banach fixed point theorem. To this end we first establish a bound on the derivative of $ \phi$ in Lemma \ref{derivest_lemma}, which together with Lemma \ref{semi_monotonicity_lemma} will give strong estimates for $ \phi$ in Lemma \ref{phi_est_lemma}:  
\begin{lem} \label{derivest_lemma}
	For $x>0$
	$$ \phi'(x) > - \frac{ \phi(x)}{2x}$$  
\end{lem}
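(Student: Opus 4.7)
The plan is to exploit two applications of Gaussian integration by parts (IBP) to obtain a clean identity for the combination $\phi'(x) + \phi(x)/(2x)$, showing it equals a manifestly positive quantity.

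First, I would recall from the proof of Lemma \ref{semi_monotonicity_lemma} the formula
\[\phi'(x) = \frac{\beta^2}{2}\,\EE\bigl[(\tanh^2)''(h+\beta\sqrt{x}\,Z)\bigr],\]
and set $u:=\beta\sqrt{x}$ to shorten notation, so that $\beta^2 = u^2/x$. The target inequality $\phi'(x) + \phi(x)/(2x) > 0$ is equivalent, after multiplying by $2x>0$, to
\[u^2\,\EE\bigl[(\tanh^2)''(h+uZ)\bigr] + \EE\bigl[\tanh^2(h+uZ)\bigr] > 0.\]

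Next I would evaluate $\EE[Z^2 \tanh^2(h+uZ)]$ by two successive Gaussian IBPs applied to the standard Gaussian $Z$. Writing $f(z)=z\tanh^2(h+uz)$, the identity $\EE[Zf(Z)]=\EE[f'(Z)]$ gives
\[\EE\bigl[Z^2 \tanh^2(h+uZ)\bigr] = \EE\bigl[\tanh^2(h+uZ)\bigr] + u\,\EE\bigl[Z(\tanh^2)'(h+uZ)\bigr],\]
and a second IBP applied to $g(z)=(\tanh^2)'(h+uz)$ yields $\EE[Z(\tanh^2)'(h+uZ)] = u\,\EE[(\tanh^2)''(h+uZ)]$. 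Substituting back,
\[\EE\bigl[Z^2 \tanh^2(h+uZ)\bigr] = \EE\bigl[\tanh^2(h+uZ)\bigr] + u^2\,\EE\bigl[(\tanh^2)''(h+uZ)\bigr] = \phi(x) + 2x\,\phi'(x).\]

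Dividing by $2x$ delivers the identity
\[\phi'(x) + \frac{\phi(x)}{2x} = \frac{1}{2x}\,\EE\bigl[Z^2\tanh^2(h+\beta\sqrt{x}\,Z)\bigr].\]
The right-hand side is strictly positive for any $x>0$ (assuming the non-degenerate case $(\beta,h)\neq(0,0)$, in which $\tanh^2(h+\beta\sqrt{x}Z)>0$ almost surely on the set $\{Z\neq 0\}$, whose intersection with $\{Z^2>0\}$ has full measure). This concludes the proof. There is no real obstacle here — the whole argument is the observation that $\phi(x)+2x\phi'(x)$ is, up to a factor, the second moment weight $\EE[Z^2\tanh^2(h+\beta\sqrt{x}Z)]$, which follows from two routine applications of Gaussian IBP.
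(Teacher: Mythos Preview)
Your proof is correct and lands on the same key identity as the paper: $2x\,\phi'(x)+\phi(x)=\EE\bigl[Z^2\tanh^2(h+\beta\sqrt{x}\,Z)\bigr]$, followed by strict positivity of the right-hand side. The paper obtains this identity by differentiating the Gaussian density in the variance parameter (first for $\beta=1$, then scaling), which produces the weight $(Z^2-1)/2$ directly; you instead start from the $(\tanh^2)''$ formula and undo it via two Gaussian IBPs --- a slightly longer but equivalent route to the same endpoint.
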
 
\begin{lem} \label{phi_est_lemma}
	For any $h,\beta$ we have 
	\begin{equation}\label{xgrq_ineq}  q-\frac{1}{2}(x-q) < \phi(x) < x \quad \forall x\in(q,2q] \end{equation}
	\begin{equation}\label{xklq_ineq} x < \phi(x)< q + \frac{1}{3\frac{x}{q}-1}(q-x) \quad \forall x\in[q/2,q) \end{equation}
\end{lem}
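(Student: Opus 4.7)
The bounds $\phi(x)<x$ on $(q,2q]$ and $x<\phi(x)$ on $[q/2,q)$ are immediate from Lemma \ref{semi_monotonicity_lemma}, so the real work lies in the two remaining one-sided estimates. My plan is to extract a sharp intermediate square-root bound from Lemma \ref{derivest_lemma}, and then reduce each target inequality to an elementary polynomial inequality via a substitution that normalises $q$.

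The key observation is that Lemma \ref{derivest_lemma} is exactly equivalent to monotonicity of the auxiliary function
\[
 h(x) := \sqrt{x}\,\phi(x),
\]
since $h'(x) = \sqrt{x}\,\phi'(x) + \phi(x)/(2\sqrt{x})$, and $\phi'(x) > -\phi(x)/(2x)$ is precisely the statement $h'(x)>0$ on $(0,\infty)$. Evaluating at the fixed point $x=q$, where $h(q)=q^{3/2}$, this yields the intermediate bound
\[
 \phi(x)\; <\; q\sqrt{q/x} \quad \text{for } x\in(0,q), \qquad
 \phi(x)\; >\; q\sqrt{q/x} \quad \text{for } x>q.
\]

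It then remains to verify that this auxiliary curve lies on the correct side of the target bounds. Substituting $t=x/q$, the lower bound on $(q,2q]$ reduces to $1/\sqrt t\ge (3-t)/2$ on $t\in(1,2]$; both sides are positive there, so squaring and clearing denominators gives the polynomial inequality $t^3-6t^2+9t-4\le 0$, which factors as $(t-1)^2(t-4)\le 0$ and is therefore clear on $(1,2]$. Similarly, the upper bound on $[q/2,q)$ becomes $1/\sqrt t \le 2t/(3t-1)$ on $t\in[1/2,1)$; setting $u=\sqrt t$, this reduces to $2u^3-3u^2+1\ge 0$, which factors as $(u-1)^2(2u+1)\ge 0$ and holds for all $u>-1/2$.

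The only anticipated obstacle is really just spotting the right invariant $\sqrt{x}\,\phi(x)$; once the monotonicity is in hand, everything else is polynomial factorisation. The double roots at $t=1$ (resp.\ $u=1$) in both factorisations are forced by the fact that $\phi$, the auxiliary curve $q\sqrt{q/x}$, and the target affine/rational bounds all share the common value $q$ and common slope $-1/2$ at $x=q$ — the slope $-1/2$ being the extremal value in Lemma \ref{derivest_lemma} at the fixed point. The overall argument is thus completely explicit: (i) rewrite Lemma \ref{derivest_lemma} as $(\sqrt{x}\,\phi(x))'>0$, (ii) compare to $h(q)=q^{3/2}$ to obtain the sharp square-root bound on each side of $q$, and (iii) dominate (resp.\ bound from above) this square-root bound by the stated expressions through the two explicit cubic factorisations above.
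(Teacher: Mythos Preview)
Your proof is correct and takes a genuinely different route from the paper's. The paper bounds $\phi'$ pointwise by a constant on each interval: for $z\in[q,x]$ it uses the monotonicity of $\phi(z)/z$ (borrowed from the proof of Lemma~\ref{semi_monotonicity_lemma}) together with Lemma~\ref{derivest_lemma} to get $\phi'(z)>-\tfrac12$, and integrates to obtain \eqref{xgrq_ineq} directly; for $z\in[x,q]$ it similarly bounds $\phi'(z)>-\phi(x)/(2x)$, integrates, and solves the resulting linear inequality in $\phi(x)$ to obtain \eqref{xklq_ineq}. Your approach instead recognises Lemma~\ref{derivest_lemma} as the single statement $(\sqrt{x}\,\phi(x))'>0$, extracts the sharper intermediate bound $\phi(x)\gtrless q\sqrt{q/x}$ on the two sides of $q$, and then reduces the comparison with the target bounds to the explicit factorisations $(t-1)^2(t-4)$ and $(u-1)^2(2u+1)$. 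Your argument is self-contained from Lemma~\ref{derivest_lemma} alone (it does not invoke the monotonicity of $\phi(z)/z$) and treats both cases uniformly through one invariant; the paper's argument avoids the algebra by integrating a constant derivative bound but leans on the extra monotonicity input. Both yield the strict inequalities since your intermediate bound $\phi(x)\gtrless q\sqrt{q/x}$ is already strict for $x\neq q$.
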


\begin{proof}[Proof of Lemma \ref{derivest_lemma}]
	We first tackle the case $\beta=1$ and generalize thereafter. Computing the derivative we have for $x\ge 0$
	\begin{align*} x\phi'_1(x) &=   \int \tanh^2(z)  x\left(-\frac{1}{2x}+ \frac{(z-h)^2}{2x^2}\right) \frac{1}{\sqrt{2\pi x}} \exp\left( - \frac{(z-h)^2}{2x}\right) dz\\
	&=\EE \left[ \tanh^2(h+\sqrt{x}Z) \frac{Z^2 - 1}{2} \right] >-\frac{1}{2}\phi_1(x). \end{align*}
	The last step using that $\EE[Z^2 \tanh^2(h+\sqrt{x}Z) ]>0$.
	The claim now follows for general $\beta$, since 
	$$  \phi'(x) = \beta^2 \phi'_1(\beta^2 x)> - \frac{\phi_1(\beta^2x)}{2x} = -\frac{ \phi(x)}{2x}.$$ 
\end{proof}

\begin{proof}[Proof of Lemma \ref{phi_est_lemma}]
	First note that there is no claim if $q=0$, hence we can assume $q>0$. The estimates comparing $ \phi(x)$ to $x$ follow immediately from Lemma \ref{semi_monotonicity_lemma}. We now prove the remaining two estimates. 
	Clearly for $x\in[q/2,2q]$
	\begin{equation}\label{mvt_eq} \phi(x) - q = \phi(x) - \phi(q) = \int_{q}^x \phi'(z) dz. \end{equation}
	Using Lemma \ref{derivest_lemma} and that $ \phi(z)/z$ is decreasing (see proof of Lemma \ref{semi_monotonicity_lemma}) we have for $x>q$ and $z\in [q,x]$ 
	$$  \phi'(z)> - \frac{ \phi(z)}{2z}>- \frac{ \phi(q)}{2q} = -\frac{1}{2}. $$
	estimating \eqref{mvt_eq} with the above yields \eqref{xgrq_ineq}. By the same argument we obtain for $x\in[q/2,q]$ and $z\in [x,q]$ that 
	$$  \phi'(z)> - \frac{ \phi(z)}{2z}>- \frac{ \phi(x)}{2x}.  $$
	Estimating \eqref{mvt_eq} this way yields 
	$$ \phi(x)< q+ \frac{q-x}{2x} \phi(x),  $$
	which can be rearranged to obtain \eqref{xklq_ineq}
	$$ \phi(x)< q (1- \frac{q-x}{2x})^{-1}= q+\frac{1}{3\frac{x}{q}-1}(q-x). $$
\end{proof}

\begin{proof}[Proof of Theorem \ref{unif_conv_thrm}]
	By Lemma \ref{step1_lem} we may assume $q>0$ (as the claim for $q=0$ is covered by Lemma \ref{step1_lem}) and that we start in $[q/2,2q]$, since otherwise there exists a uniformly bounded number of steps, such that we arrive in this interval. We now iterate one more step and consider 
	$$ a := \min \phi( [q/2,2q]), \quad b:= \max \phi( [q/2,2q]).$$
	Note that $ \phi$ is a continuous function and therefore attains maximum and minimum on a compact. By Lemma \ref{phi_est_lemma} we verify $q/2<a\leq b < 2q$, which is seen by estimating all $x$ by worst case in the given range. With this we established that $\phi^{(k)}(x)$ enters $[a,b]$ in a uniformly bounded number of steps and never leaves thereafter. We now consider a slightly larger interval of the shape $[q-\gamma,q+2\gamma]$ to start and choose $\gamma = \max\{ q-a, \frac{b-q}{2}\}<q/2$, such that $[a,b] \subset [q-\gamma,q+2\gamma]\subset (q/2,2q)$.
	For $x\in [q-\gamma,q)$ we have $3\frac{x}{q}-1\geq 3\frac{a}{q}-1 > \frac{1}{2}$, where we note that $q-\gamma>q/2$. Hence by Lemma \ref{phi_est_lemma} we have 
	\begin{equation}\label{cont_eq_prep} x < \phi(x)< q + 2(1-\varepsilon)(q-x) \quad \forall x\in[q-\gamma,q),\end{equation}
	where $\varepsilon := 1- (6\frac{q-\gamma}{q}-2)^{-1}>0$.
	For $d\in (0,\gamma]$, we have $q-\gamma\leq q-d < q+2d \leq q+2\gamma$. Hence, by \eqref{cont_eq_prep} and  the first inequality of Lemma \ref{phi_est_lemma} we have 
	\begin{equation}\label{contract_eq1} \phi([q-d,q]) \subset \left(q-d, q +2 (1-\varepsilon)d\right),\end{equation}
	\begin{equation}\label{contract_eq2} \phi([q,q+2d]) \subset \left(q-d,q+2d\right).\end{equation}
	We now show that a uniformly bounded amount of steps suffices for $\phi^{(k)}(x)$ to reach any fixed neighborhood of $q$. This implies the claim by considering a sequence of smaller and smaller neighborhoods of form $[q-d,q+2d]$, since the iteration stays in such intervals indefinitely due to \eqref{contract_eq1} and \eqref{contract_eq2}. We now fix an arbitrary $ d \in (0,\gamma]$. The iteration leaves $[q-d,q-(1-\varepsilon) d]$ in a uniformly bounded number of steps by Lemmata \ref{fin_steps_lem} and \ref{semi_monotonicity_lemma}. Hence the iteration exits $[q-d,q-(1-\varepsilon) d]$ to a point on $[q-(1-\varepsilon) d, q+2(1-\varepsilon)d]$ and then never escapes this interval:  to see this, we use \eqref{contract_eq1} and \eqref{contract_eq2}
	with $(1-\varepsilon)d$ in place of $d$.  Finally the iteration also leaves $[q+2(1-\varepsilon)d, q+2d]$ in uniformly boundedly many steps to either $[q-(1-\varepsilon) d, q+2(1-\varepsilon)d]$, in which it stays, or to $[q-d, q-(1-\varepsilon)d]$, from which it enters $[q-(1-\varepsilon) d, q+2(1-\varepsilon)d]$ in a uniformly bounded amount of steps. Hence for any $d$ there exists a finite $K_d=K_{d,\beta,h}\in \N$, such that 
	$$ \phi^{(K_d)}([q-d,q+2d]) \subset [q-(1-\varepsilon)d,q+2(1-\varepsilon)d]. $$
	Repeating this process we see that for any fixed $l\in \N$ the iteration reaches $[q-(1-\varepsilon)^l\gamma,q+2(1-\varepsilon)^l\gamma]$ in a number of steps bounded uniformly over all choices of starting point $x\in\R^{+}_0$ for $h\neq 0$ and $x>\delta$ for $h=0$. The claim follows.
\end{proof}

\section{Proof of Theorem~\ref{thm-contract-cond}}\label{contract-sec}

\begin{proof}[Proof of Theorem~\ref{thm-contract-cond}: The case $k=k'$]
	Let $\epsilon>0$, we have 
	\begin{multline}\label{qconvergence}
		\PP\left(\left|q^{(k)}_N-q\right|> \epsilon\right)\leq   \PP\left(\left|q^{(k)}_N- \phi^{(k-1)}\left(q^{(1)}_N\right)\right|> \frac{\epsilon}{2}\right)+  \PP\left(\left| \phi^{(k-1)}\left(q^{(1)}_N\right)-q\right|> \frac{\epsilon}{2}\right).
	\end{multline}
	To show assertion~\eqref{contract-cond-uni} for $k=k'$, it suffices to prove that the $\lim_{k\to\infty}\limsup_{N\to\infty}$ of the terms on the r.h.s.\ of~\eqref{qconvergence} tends to $0$. 
	By the Markov inequality, the first term on the r.h.s. is up to the factor $4/\varepsilon^2$ at most 
	\begin{align}\begin{split}\label{telescope}
		\E\left[\left(q^{(k)}_N- \phi^{(k-1)}\left(q^{(1)}_N\right)\right)^2\right]&=\E\left[\left(\sum_{i=0}^{k-2} \phi^{(i)}\left(q^{(k-i)}_N \right)- \phi^{(i+1)}\left(q^{(k-i-1)}_N\right)\right)^2\right]\\
		&\leq  k \sum_{i=0}^{k-2}\beta^{4i} \E\left[\left( \phi^{(i)}\left(q^{(k-i)}_N \right)- \phi^{(i+1)}\left(q^{(k-i-1)}_N\right)\right)^2\right]\\    & \leq k \sum_{i=0}^{k-2} \E\left[\left( q^{(k-i)}_N- \phi^{(1)}\left(q^{(k-i-1)}_N\right)\right)^2\right] = O\left(\frac{1}{\sqrt N}\right)
\end{split}
	\end{align}
	the second line by $\left(\sum_{i=1}^{k} a_i\right)^2\leq k \sum_{i=1}^{k} a_i^2$, the third line using the fact that $\phi(t)$ is $\beta^2$-Lipschitz multiple times.
	 For the last estimate in~\eqref{telescope}, we used Theorem \ref{q-prop} recognizing
 \[ \phi\left(q^{(k-i-1)}_N\right)\defi  \psi\left( q^{(k-i-1)}_N, q^{(k-i-1)}_N, q^{(k-i-1)}_N \right).\]	
	If $h \neq 0$ or $h=0$ and $\beta<1$, we bound
	\[\lim_{k\to\infty}\limsup_{N\to\infty}\PP\left(\left| \phi^{(k-1)}\left(q^{(1)}_N\right)-q\right|> \frac{\epsilon}{2}\right)
	\le \lim_{k\to\infty}\PP\left( \sup_{x\in \R^+_0} \left| \phi^{(k)}(x)-q\right|> \frac{\epsilon}{2}\right)=0 \]
	for the second term on the r.h.s.\ of~\eqref{qconvergence}, where the last equality follows from Theorem~\ref{unif_conv_thrm}.
	
	If $h = 0$ and $\beta \geq 1$ and $\liminf_{N\to\infty} q^{(1)}_N >\delta$ for some $\delta>0$ and bound the second term on the r.h.s.\ of~\eqref{qconvergence} by
	\begin{equation}\label{qconvergence2-1}
		\lim_{k\to\infty}\limsup_{N\to\infty}\PP\left(\left| \phi^{(k-1)}\left(q^{(1)}_N\right)-q\right|> \frac{\epsilon}{2}\right)
		\le \lim_{k\to\infty} \PP\left(\sup_{x\ge \delta/2}\left| \phi^{(k-1)}\left(x\right)-q\right|> \frac{\epsilon}{2}\right) =0,
	\end{equation}
	 by Theorem~\ref{unif_conv_thrm}.
	
	If $h = 0$ and $\beta \geq 1$, with $\lim_{N\to\infty} q^{(1)}_N =0$, we replace $q$ with $0$ in the beginning of the proof. Then we bound the second term on the r.h.s.\ of~\eqref{qconvergence} by
	\begin{multline}\label{qconvergence2-2}
		\PP\left( \phi^{(k-1)}\left(q^{(1)}_N\right) > \frac{\epsilon}{2}\right)=\PP\left(\left| \phi^{(k-1)}\left(q^{(1)}_N\right)- \phi^{(k-1)}\left(0\right)\right|> \frac{\epsilon}{2}\right)
		\leq \PP\left(\beta^{2(k-1)} \left|q^{(1)}_N-0\right|> \frac{\epsilon}{2}\right)
	\end{multline}
	using the that $ \phi$ is $\beta^2$-Lipschitz and that $0$ is a fixed point. The r.h.s.\ is tending to $0$ as $N \to \infty$ by assumption.
\end{proof}

To consider the limiting covariances we introduce the convenient notation 
\[ \varphi: [-q,q] \mapsto [-q,q],\; t \to \psi(t, q,q).\].

\begin{lem}\label{l:AT-0}
	Let $h=0$. Then the AT condition~\eqref{e:AT} is equivalent to~$|\beta|\le 1$.
\end{lem}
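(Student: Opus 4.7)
The plan is to split into the easy range $|\beta|\le 1$ and the substantive range $|\beta|>1$. For $|\beta|\le 1$, Lemma~\ref{semi_monotonicity_lemma} pins down the only non-negative fixed point of $\phi$ as $q=0$, so substituting $h=q=0$ in~\eqref{e:AT} collapses the AT quantity to $\beta^2$, and the AT condition becomes literally $\beta^2\le 1$. Nothing more is needed in this direction.

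For $|\beta|>1$ we have $q>0$ (the positive fixed point selected in the paper), and the task is to show strict failure, namely $\beta^2\,\EE[\cosh^{-4}(\beta\sqrt{q}Z)]>1$. My plan is to recognise the AT quantity as a Gaussian Poincar\'e ratio for $f(z):=\tanh(\beta\sqrt{q}z)$. Since $\tanh$ is odd and $h=0$ one has $\EE[f(Z)]=0$, and therefore by the fixed-point equation~\eqref{e:q},
\[\Var(f(Z))=\EE[\tanh^2(\beta\sqrt{q}Z)]=q,\]
while $f'(z)=\beta\sqrt{q}\cosh^{-2}(\beta\sqrt{q}z)$ yields
\[\EE[f'(Z)^2]=\beta^2 q\,\EE[\cosh^{-4}(\beta\sqrt{q}Z)].\]
The Gaussian Poincar\'e inequality $\Var(f(Z))\le \EE[f'(Z)^2]$ then gives, after dividing by $q>0$, exactly that the AT quantity is $\ge 1$.

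The only delicate point will be promoting this to a \emph{strict} inequality. For that I would invoke the standard equality case of Gaussian Poincar\'e, which forces the test function to be affine; since $\beta\sqrt{q}\neq 0$, the map $z\mapsto\tanh(\beta\sqrt{q}z)$ is manifestly not affine, and strict inequality follows. I expect no genuine obstacle: once the AT quantity is identified with the Poincar\'e ratio for $\tanh$, the equivalence follows in a few lines, using only the fixed-point identity and the classical equality case of the Gaussian Poincar\'e inequality.
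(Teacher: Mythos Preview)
Your proof is correct and takes a genuinely different route from the paper's.

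The paper argues via the auxiliary function $\varphi(t)=\psi(t,q,q)$: for $h=0$, $|\beta|>1$ one has $\varphi(0)=0$, $\varphi(q)=q$, and $\varphi$ is convex and increasing on $[0,q]$ (citing Lemma~2.2 of~\cite{b1}). Convexity forces the slope at the right endpoint to strictly exceed the secant slope $1$, i.e.\ $\varphi'(q)>1$; and the identity $\varphi'(q)=\beta^2\EE\cosh^{-4}(\beta\sqrt{q}Z)$ (again from~\cite{b1}) then shows the AT quantity exceeds $1$.

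Your argument instead recognises the AT quantity directly as $\EE[f'(Z)^2]/\Var(f(Z))$ for $f(z)=\tanh(\beta\sqrt{q}z)$, and invokes the Gaussian Poincar\'e inequality together with its equality case. This is cleaner and fully self-contained: it avoids the external reference to~\cite{b1} and bypasses the bivariate function $\psi$ entirely. The two approaches are in fact closely related---the paper's convexity argument amounts to an interpolation proof of Poincar\'e for this particular $f$, since $\varphi(t)$ is precisely $\EE[f(Z')f(Z'')]$ along a Gaussian path with covariance $t$---but your direct appeal to Poincar\'e is shorter and makes the structural reason transparent.
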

\begin{proof}
	Clearly, \eqref{e:AT} is satisfied if $\beta\le 1$ as $h=0$. We now assume that $h=0$ and $\beta>1$. Then $q>0$,
	$\varphi(0)=\E\left[ \tanh\left( \beta\sqrt{q}Z'\right)\right]^2=0$, and $\varphi(q)= \phi(q)=q$.
	By the proof of Lemma~2.2 a) of~\cite{b1}, $\varphi'(t)>0, \varphi''(t)\geq0$ for $t \geq 0$. As $q>0$ is a fixed point of $\varphi$, it follows that there exists $x\in(0,q)$ with $\varphi(x)<x$. Hence, $\varphi(q)-\varphi(x)>q-x$ and
	\[\frac{1}{q-x}\int_{x}^q \varphi'(t)\, d t >1.\]
	This in turn implies that $\varphi'(t)>1$ for some $t\in(x,q)$. It follows that $\varphi'(q)>1$ as $\varphi'$ is increasing by Lemma~2.2 a) of~\cite{b1}. By Lemma~2.2 b) of~\cite{b1}, $\varphi'(q)=\beta^2\EE\cosh^{-4}(h+\beta\sqrt{q}Z)$, hence the AT condition~\eqref{e:AT} cannot be satisfied.
\end{proof}

Before we show the remaining part of Theorem~\ref{thm-contract-cond} some final preparation is needed. We show negative covariances increase when propagated, i.e. 
\begin{lem}\label{-t}
For $\beta,h\in \R$ and $t<0$ we have 
\[
\varphi(t) \geq \chi(q)^2 +t,
\]
where $ \chi(q)^2=0$ if and only if h=0.
\end{lem}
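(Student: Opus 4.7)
Fix $t<0$; since $|t|\le\sqrt{qq}=q$ this forces $q>0$, and I set $\rho:=t/q\in[-1,0)$ and $f(x):=\tanh(h+\beta x)$. With $(Z',Z'')$ jointly centered Gaussian, $\Var Z'=\Var Z''=q$, $\Cov(Z',Z'')=t$, one has $\chi(q)=\EE[f(Z')]$, so the claim $\varphi(t)\ge\chi(q)^2+t$ is equivalent to $\Cov(f(Z'),f(Z''))\ge t$. The plan is to analyze this covariance via the orthonormal Hermite expansion of $f$ combined with Mehler's formula, which turns the inequality into a manifestly signed algebraic identity.

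First I would expand $f(\sqrt q\,W)=\sum_{k\ge 0}c_kH_k(W)$ in $L^2$, where $W$ is standard Gaussian and $(H_k)_{k\ge 0}$ are the orthonormal Hermite polynomials; this converges because $f$ is bounded. One reads off $c_0=\chi(q)$ and, by Parseval together with $\EE[f(\sqrt q W)^2]=\phi(q)=q$, obtains $\sum_{k\ge 1}c_k^2=q-\chi(q)^2$. Mehler's formula (equivalently $\EE[H_k(W')H_\ell(W'')]=\rho^k\delta_{k\ell}$ for standardized Gaussians of correlation $\rho$) then yields
\[
\varphi(t)=\EE[f(Z')f(Z'')]=\sum_{k\ge 0}c_k^2\rho^k.
\]

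Substituting $t=q\rho=\chi(q)^2\rho+\rho\sum_{k\ge 1}c_k^2$, subtracting $c_0^2=\chi(q)^2$, and observing that the $k=1$ contribution cancels, a brief rearrangement gives
\[
\varphi(t)-\chi(q)^2-t=\sum_{k\ge 2}c_k^2\,\rho(\rho^{k-1}-1)-\chi(q)^2\rho.
\]
For $\rho\in[-1,0)$ one has $|\rho^{k-1}|\le 1$ hence $\rho^{k-1}-1\le 0$, so $\rho(\rho^{k-1}-1)\ge 0$ termwise; also $-\chi(q)^2\rho\ge 0$. Summing gives $\varphi(t)\ge\chi(q)^2+t$, with strict inequality whenever $\chi(q)\ne 0$.

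For the vanishing criterion I would argue separately: if $h=0$ then oddness of $\tanh$ and symmetry of $Z$ give $\chi(q)=0$; if $h\ne 0$, the identity $\tanh(h+x)+\tanh(h-x)=\sinh(2h)/(\cosh(h+x)\cosh(h-x))$ together with $Z\stackrel{d}{=}-Z$ yields $2\chi(q)=\EE[\sinh(2h)/(\cosh(h+\beta\sqrt q Z)\cosh(h-\beta\sqrt q Z))]$, which has the sign of $h$, so $\chi(q)^2>0$. The only step requiring any care is the signed bookkeeping producing the displayed decomposition; the rest is routine since $f$ is smooth and bounded, and no convergence issue arises for the Hermite series.
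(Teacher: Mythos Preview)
Your argument is correct and is a genuinely different route from the paper's. The paper compares $\varphi$ with the $h=0$ version $\Psi(t)=\EE[\tanh(\beta Z')\tanh(\beta Z'')]$: it shows $\varphi'(t)\le\Psi'(t)$ on $[-q,0]$ by proving that $h\mapsto\varphi'(t)$ is maximized at $h=0$ (via log-concavity of $\tanh'$ and a symmetrization trick), then uses that $\Psi$ is odd together with its convexity on $[0,q]$ and $\Psi(q)\le q$ to get $\Psi(t)\ge t$ for $t\in[-q,0]$, and finally integrates to obtain $\varphi(t)\ge\varphi(0)+\Psi(t)=\chi(q)^2+\Psi(t)\ge\chi(q)^2+t$. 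Your Hermite--Mehler decomposition bypasses all of this: the single identity $\varphi(t)-\chi(q)^2-t=\sum_{k\ge 2}c_k^2\,\rho(\rho^{k-1}-1)-\chi(q)^2\rho$ is manifestly nonnegative for $\rho<0$, and the only model-specific input is the fixed-point relation $\sum_k c_k^2=\phi(q)=q$. Your approach is shorter, more structural (it would work for any bounded $f$ in place of $\tanh$, given $\phi(q)=q$), and gives strict inequality for $h\ne 0$ for free; the paper's approach, while longer, yields the sharper intermediate bound $\varphi(t)\ge\chi(q)^2+\Psi(t)$ and exhibits the monotonicity of $\varphi'$ in $h$, which may be of independent interest. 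Your treatment of the ``$\chi(q)^2=0\Leftrightarrow h=0$'' part via the addition formula for $\tanh$ is also cleaner than the paper's derivative argument.
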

Thus $\varphi$ has no fixed points on $[-1,0]$ for $h \neq 0$.
\begin{proof}
 Fix $\beta \in \R ,h \neq 0 \in \R$. There exists only one $q=q_{\beta,h}$  that solves \eqref{e:q}. In this proof, $q$ will be considered as a constant and not a function of $h$ anymore. We recall that $\varphi(t)=\psi(\max(-q,\min(t,q)), q,q)$. We will compare this function with
\begin{equation}
	\Psi(t)=\EE\left[\tanh(\beta Z)\tanh(\beta Z') \right]
\end{equation}
where the expectation is over centered Gaussians $Z',Z''$ such that 
\beq 
\Var(Z)=q,  \quad \Var(Z')=q, \quad \Cov(Z,Z')=\min\{\max\{t, -q\}, q\}.
\eeq
Note that for $-q \leq t \leq q$, we can write $\Psi$ as
\begin{equation}
	\Psi(t)=\EE\left[\tanh\left(\beta \left( \text{sign}(t)\sqrt{|t|}Z+ \sqrt{q-|t|} Z'\right)\right)\tanh \left( \beta \left( \sqrt{|t|}Z+ \sqrt{q-|t|}Z''\right)\right)\right]
\end{equation}
 where the expectation is over independent centered Gaussians $Z, Z',Z''$. Note that $\Psi$ is an odd function. We claim for the moment that for $t\in[-q,0]$ and $h\neq 0$, it holds 
 \begin{equation}\label{the_key1} \varphi'(t) \leq \Psi'(t). \end{equation}
It follows that
\[ \varphi(t)= \varphi(0)- \int_t^0 \varphi'(x) dx \geq \varphi(0)- \int_t^0  \Psi'(x) dx  \]
\[ = \varphi(0) - \Psi(0)+ \Psi(t) = \chi(q)^2 +  \Psi(t),\]
since $\varphi(0) = \chi(q)^2>0$ and $\Psi(0)= 0$. We will now prove that $\Psi(t)\geq t$ for $t \in [-q,0]$. For $t \in [0,q]$, we easily check that $\Psi' \geq 0$ and $\Psi'' \geq 0$ (see \cite{b1}, Lemma 2.2). Thus $\Psi$ is convex, increasing on $ [0,q]$  and we claim that
 \begin{equation}\label{the_key2} 
\Psi(q)=\EE\left[\tanh^2\left(\beta \sqrt{q}Z\right)\right]\leq \EE\left[\tanh^2\left(h+\beta \sqrt{q}Z\right)\right]=q,
 \end{equation}
 (We will see that the proof of this claim is almost the same as the proof of the first claim).
 In addition, $\Psi(0)=0$, thus $\Psi(t)\leq t$ on $ [0,q]$. As $\Psi$ is odd, we have that $\Psi(t)\geq t$ on $ [-q,0]$.
It remains to show \eqref{the_key1} and \eqref{the_key2}. To this end we note that for $X,Y$ independent centered Gaussians with 
\[ \Var(X) = \beta^2 \frac{q+t}{2} \quad \mbox{ and } \quad \Var(Y) =  \beta^2\frac{q-t}{2}\]
we have the representation
\[ \varphi(t) =  \EE[ \tanh ( h + X+Y) \tanh (h+X-Y)] \]
and by computing the derivative and applying Gaussian integration by parts
\[ \varphi'(t) = \beta^2 \EE[ \tanh' ( h + X+Y) \tanh'(h+X-Y)]. \]
We now argue that $\varphi'(t)$ is as a function of $h$ maximized in $h=0$. This is hardly surprising when noticing that $\tanh'= 1-\tanh^2$ is maximized in $0$, as matching the maximum of $\tanh'$ with the maximum of the Gaussian density produces the largest value. To make this intuition precise we define 
\[ g(z) := \EE[ \tanh'(z+Y) \tanh'(z-Y)],\]
so that $\varphi'(t) = \beta^2 \EE[ g(h+X)]$. Since $\tanh'$ is an even function and $Y$ has the same distribution as $-Y$ one quickly checks that $g$ is an even function as well. 
Additionally we verify that $g$ is decreasing on $\R^+$ and increasing otherwise by the following estimate for $0<z'<z$ or $0>z'>z$ using the concavity of $\ln \tanh'$:
\[ g(z) = \EE\left[ \exp \left( \ln \tanh' (Y-z) + \ln \tanh' (Y+z)\right)\right]\]
\[ \leq  \EE\left[ \exp \left( \ln \tanh' (Y-z') + \ln \tanh' (Y+z')\right)\right]  = g(z').\]
Writing $f_X$ for the Gaussian density of $X$ we have 
\[ \varphi'(t) = \int_{-\infty}^{+\infty} g(h+x) f_X(x) dx.  \]
$g$ is even, thus $g'$ is an odd function and we have that 
\beq\bea\label{trick}
\frac{d}{dh} &\int_{-\infty}^{+\infty} g(h+x) f_X(x) dx\\
&= \int g'(h+x) f_X(x) dx = e^{\frac{-h^2}{2 \Var(X)}} \int_{-\infty}^{+\infty} g'(x) f_X(x) e^{\frac{hx}{\Var(X)}} dx\\
&=e^{\frac{-h^2}{2 \Var(X)}} \Bigg(- \int_{+\infty}^{0} g'(-x) f_X(-x) e^{-\frac{hx}{\Var(X)}}  dx+\int_{0}^{+\infty} g'(x) f_X(x) e^{\frac{hx}{\Var(X)}}  dx\Bigg)\\
&=2 e^{\frac{-h^2}{2 \Var(X)}} \int_{0}^{+\infty} g'(x) f_X(x) \sinh\left(\frac{hx}{\Var(X)}\right) dx.
\eea\eeq
using a linear change of variable twice. We proved that $g'$ is negative on $\R^+$ and we have that $ \text{sign$\left(\sinh\left(\frac{hx}{\Var(X)}\right)\right)$}=\text{sign(h)}$ for $x>0$: It follows that 
\[
h \mapsto \int_{-\infty}^{+\infty} g(h+x) f_X(x) dx
\]
 is increasing on $\R^{-}$, decreasing on $\R^{+}$ and thus maximal in $h=0$. We thus proved \eqref{the_key1} and \eqref{the_key2} follows by using the same steps as in \eqref{trick}.\\
We recall that $\chi(q)=\E\left(\tanh\left(h+\beta \sqrt{q} Z\right)\right)=:f(h)$. Note that $f(0)=0$ and  
\[f'(h)=\E\left(1-\tanh^2\left(h+\beta \sqrt{q} Z\right)\right)>0,
\]
which allows us to conclude $\chi(q)^2=f(h)^2>0$ for all $h \neq 0$.
\end{proof}

 We proceed to prove Theorem ~\ref{thm-contract-cond} ($k\neq k'$) in cases. If the AT condition ~\eqref{e:AT} is satisfied $\tilde{q}=q$ and $\varphi(q)=q$ is the unique fixed point of $\varphi$. We thereafter give a proof in the case the AT condition ~\eqref{e:AT} is not satisfied. In this case $\tilde{q}<q$ is a second fixed point of $\varphi$ besides $q$, which is the fundamental reason a more involved argument is required.   

\begin{proof}[Proof of Theorem~\ref{thm-contract-cond}: The case $k \neq k'$ if the AT condition \eqref{e:AT} is satisfied.]
          
	 Under the AT condition~\eqref{e:AT}, Lemma~\ref{l:AT-0} and Theorem~\ref{thm-contract-cond} for $k=k'$ give $q^{(k)}_N\to q$ as $N\to\infty$ followed by $k\to\infty$ in probability. We claim that 
	\begin{equation}\label{conc-phi}
		\lim_{k\to\infty}\sup_{t\in[-q,q]}\left|\varphi^{(k)}(t)- q\right|=0
	\end{equation}
	under the AT condition~\eqref{e:AT}. We first finish the proof assuming claim \eqref{conc-phi} and thereafter return to prove it.
	Note that
	\[ | q^{(k,k')}_N | \le \sqrt{q^{(k)}_N q^{(k')}_N}\]
	by definition of these quantities and by the Cauchy-Schwarz inequality. It follows for each $\delta>0$ that
	\[\lim_{k,k'\to\infty} \limsup_{N\to\infty} \normalcolor \mathbb P\left( | q^{(k,k')}_N |\ge q + \delta \right)=0.\]
	Hence, 
	\[ q^{(k,k')}_N - \psi(q^{(k-1,k'-1)}_N \wedge q, q^{(k-1)}_N, q^{(k'-1)}_N ) \to 0\]
	in probability as $N\to\infty$ followed by $k,k'\to\infty$ by Theorem~\ref{q-prop}. 
	As $\psi$ is Lipschitz (by Lemma~\ref{Lipschitz}) and  as \normalcolor $q^{(k)}_N\to q$ in probability,
	\[\psi(q^{(k-1,k'-1)}_N\wedge q , q^{(k-1)}_N, q^{(k'-1)}_N ) -\psi(q^{(k-1,k'-1)}_N\wedge q, q, q)  \to 0\]
	as $N\to\infty$ followed by $k,k'\to\infty$. 
	The second term in the last display equals $\varphi(q^{(k-1,k'-1)}_N \wedge q)$.
	Hence,
	\[q^{(k,k')}_N -  \varphi(q^{(k-1,k'-1)}_N \wedge q)  \to 0 \]
	as $N\to\infty$ followed by $k,k'\to\infty$. Iterating these steps and using that $\varphi$ is Lipschitz, we obtain
	\begin{equation}\label{qkk-it}
		q^{(k,k')}_N - \varphi^{(\ell)}\left(q^{(k-\ell,k'-\ell)}_N\wedge q\right) \to 0
	\end{equation}
	as $N\to\infty$ followed by $k,k'\to\infty$, for any fixed $l\in \N$. The assertion follows from~\eqref{qkk-it} and \eqref{conc-phi}, by considering large $\ell$.
	
	It remains to show the claim~\eqref{conc-phi}.
	This follows as in Lemma 2.4 b) of~\cite{b1}.  For $h=0$, the AT condition~\eqref{e:AT} and Lemma~\ref{l:AT-0} imply that $\beta\le 1$, and hence that $q=0$ is the unique fixed point of~\eqref{e:q} and $\varphi\equiv 0$. We assume $h\ne 0$ from here on, which implies $q>0$. Let $\epsilon\in(0,q)$. 	From~\eqref{e:q}, we note that $\varphi(q)=q$. By the proof of Lemma~2.2 a) of~\cite{b1}, $\varphi'>0$ and $\varphi''>0$ on $[0,q]$. Under the AT condition~\eqref{e:AT}, we have $0\le \varphi'(q-\epsilon)<\varphi'(q)\le 1$  by Lemma~2.2b of~\cite{b1}. Consequently for $t\in[0,q-\epsilon]$ there exists $\delta=\delta_\epsilon>0$, such that $\varphi'(t)\leq 1-\delta$ and therefore
	\[q\ge \varphi(t) \ge  \varphi(q-\epsilon) - (q-\epsilon-t) \varphi'(q-\epsilon) \ge (q-\epsilon) - \left(q-\epsilon-t\right) (1-\delta) = t + \delta(q-\epsilon-t).\]
Hence we obtain uniform convergence of $\varphi^{(k)}$ to $q$ on $[0,q]$, since for any fixed $\varepsilon>0$ a finite number ($\lceil \delta^{-1} \epsilon^{-1} \rceil$) of steps sufices to ensure that $\varphi^{(k)}$ is within $2\varepsilon$ range of $q$. For $t \in[-q,0]$, Lemma \ref{-t} ensures that $\varphi^{(k)}(t)>0$ after a finite number of steps $k$ and we can conclude the proof if  the AT condition~\eqref{e:AT} is satisfied.
\end{proof}

\begin{proof}[Proof of Theorem~\ref{thm-contract-cond}: The case $k \neq k' $ if the AT condition \eqref{e:AT} is {\bf not} satisfied.] 
For $h=0$,we have $\tilde{q}=0$ and all $q^{(k,1)}_N$, $k>1$ converge to zero by Theorem~\ref{q-prop} \eqref{e:qprop-chi} due to $\chi \defi 0$. This implies convergence to zero of all $q^{(k,k')}$ for $k\neq k'$  by Theorem~\ref{q-prop} \eqref{e:qprop-psi}, since $\psi(0,x,y) =0$ for $h=0$.  Next for arbitrary $h\in \R$ and $\beta=0$ we have $\psi\defi \tanh(h)^2$, $\tilde{q}=q = \tanh^2(h)$ and $m_i^{(k)} = \tanh(h)$ for any $k>1$, which yields the claim. From here on let $h\neq 0$ and $\beta\neq 0$. We define for $q^{(1)}\in[0,1]$, $\bar{m}\in [-\sqrt{q^{(1)}},\sqrt{q^{(1)}}]$, for $k\geq k'\geq 1$
\[ q^{(k+1,k'+1)} := \psi(q^{(k,k')},q^{(k,k)},q^{(k',k')} ), \quad q^{(k+1,1)} := \chi( q^{(k,k)}) \bar{m}.\]
We again shorten $q^{(k,k)}$ to $q^{(k)}$. For the choice $q^{(1)}= q^{(1)}_N$ and $\bar{m}= N^{-1} \sum_i m_i^{(1)}$ by Theorem~\ref{q-prop},  $q^{(k,k')}- q^{(k,k')}_N$ converges to zero in probability  in the large $N$ limit. Note that $q^{(k,k')}$ depends on $N$ only through the $N$ dependent choice of starting condition. It is necessary to make sure that the iteration never fixes to the unstable fixed point $\psi(q,q,q) =q$.  To this end we prove 
\begin{lem}
For $\beta,h \neq 0$ uniformly over all $q^{(1)},\bar{m}$ for each $K>1$ there exists $\delta_K>0$, such that 
\begin{equation}\label{e:notq} \frac{q^{(k,k')}}{\sqrt{q^{(k)}q^{(k')}}}<1-\delta_K  \mbox{ for all } k>k', k' \in\{2,\dots, K\}, q^{(1)}\in [0,1] \mbox{ and } \bar{m} \in [-\sqrt{q^{(1)}},\sqrt{q^{(1)}}].\end{equation}
\end{lem}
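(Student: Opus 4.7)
The plan is to induct on $k'$ from $2$ to $K$, with the key ingredient being a rigidity statement for Cauchy--Schwarz applied to $\psi$. Writing $\rho^{(k,k')} := q^{(k,k')}/\sqrt{q^{(k)} q^{(k')}}$, which is well defined for $k, k' \geq 2$ because $h \neq 0$ gives $q^{(j)} = \phi(q^{(j-1)}) \geq \phi(0) = \tanh^2 h > 0$, I first observe that Cauchy--Schwarz inside the Gaussian expectation defining $\psi$ yields $\psi(t,t',t'') \leq \sqrt{\phi(t')\phi(t'')}$. Equality forces $\tanh(h+\beta Z') = \lambda\tanh(h+\beta Z'')$ almost surely; decomposing $Z' = aZ''+W$ with $W$ independent of $Z''$, boundedness and strict monotonicity of $\tanh$ force $W$ to be constant (so $|t| = \sqrt{t't''}$), and then evaluating the residual identity at $Z''=0$ together with injectivity of $\tanh$ force $\lambda = 1$ and $t' = t''$ (this is where $h \neq 0$ enters, via $\tanh h \neq 0$).

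Set $G(s,t',t'') := \psi(s\sqrt{t't''},t',t'')/\sqrt{\phi(t')\phi(t'')}$, a jointly continuous function on $[-1,1]\times[\tanh^2 h,1]^2$ which by rigidity is strictly less than $1$ when $s<1$. Compactness then yields, for each $\delta>0$, a modulus $\eta(\delta)>0$ with $G(s,t',t'') \leq 1-\eta(\delta)$ whenever $s \leq 1-\delta$ and $t',t'' \in [\tanh^2 h,1]$. The inductive step follows immediately: if $\rho^{(k-1,j)} \leq 1-\delta_j$ uniformly in $k$ and starting conditions, then $\rho^{(k,j+1)} = G(\rho^{(k-1,j)}, q^{(k-1)}, q^{(j)}) \leq 1-\eta(\delta_j) =: 1-\delta_{j+1}$.

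The hard part will be the base case $k'=2$, where $\rho^{(k,2)} = G(s_k, q^{(k-1)}, q^{(1)})$ with $s_k = \chi(q^{(k-2)})\bar m/\sqrt{q^{(k-1)} q^{(1)}}$, and $|s_k|$ is not uniformly bounded away from $1$ as $q^{(1)}\to 0$ (Jensen $|\chi(t)|\leq\sqrt{\phi(t)}$ saturates at $t=0$). To circumvent this I invoke Theorem~\ref{unif_conv_thrm}: for $h\neq 0$ the deterministic iterates $q^{(k-2)} = \phi^{(k-3)}(q^{(1)})$ converge to $q>0$ uniformly in $q^{(1)}$, so $\rho^{(k,2)}$ converges uniformly on the compact starting set $\{(q^{(1)},\bar m) : q^{(1)}\in[0,1],\, |\bar m|\leq\sqrt{q^{(1)}}\}$ to the continuous limit $\psi(\chi(q)\bar m, q, q^{(1)})/\sqrt{q\,\phi(q^{(1)})}$, and this limit is strictly less than $1$ pointwise by the rigidity combined with strict Jensen $|\chi(q)|<\sqrt{q}$ (valid since $q>0$, $\beta\neq 0$). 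Uniform convergence plus compactness gives the bound for large $k$; for the finitely many early $k$, pointwise strictness of $\rho^{(k,2)}<1$ (with direct computation at the degenerate boundary $q^{(1)}=0$, where $q^{(k,2)} = \chi(q^{(k-1)})\tanh h$ and strict Jensen again applies) together with compactness of the starting set supplies a bound there as well. The minimum of these yields $\delta_2>0$, and then $\delta_K := \min_{2\leq j\leq K}\delta_j$ completes the proof.
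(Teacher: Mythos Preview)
Your proof is correct, with one small slip: the claimed lower bound $q^{(j)}\ge\phi(0)=\tanh^2 h$ is unjustified since $\phi$ need not be monotone; replace it throughout by $q^{(j)}\ge\min_{[0,1]}\phi>0$, which is all that is actually used (this suffices for the compactness arguments and for the denominator of $G$ to be bounded away from zero).

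The organization, however, differs from the paper's. The paper avoids your large-$k$/small-$k$ split in the base case by compactifying the $k$-dependence: rather than tracking $k$, it introduces an auxiliary iteration $(a^{(\ell)},b^{(\ell)},c^{(\ell)})$ with starting datum $b^{(0)}:=q^{(k-k')}$, which for $k-k'\ge 2$ lies in the compact interval $[\min\phi,1]$, and then runs a single continuity-plus-compactness argument on the triple $(q^{(1)},b^{(0)},\bar m)$, showing $c^{(k')}<\sqrt{a^{(k')}b^{(k')}}$ by propagating the Cauchy--Schwarz equality condition down to level $1$ and contradicting strict Jensen at $b^{(0)}>0$. The residual difference-one case $k=k'+1$ (finitely many pairs once $k'\le K$) is handled separately by propagating equality down to $q^{(1)}=q$ and again contradicting strict Jensen. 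Your induction on $k'$ with an explicit modulus $\eta(\delta)$ is perhaps more direct, but it imports Theorem~\ref{unif_conv_thrm} (uniform convergence of $\phi^{(k)}$ to $q$) as an additional ingredient to control the tail in $k$; the paper's compactification via $b^{(0)}$ is self-contained and treats all large $k$ at once.
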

\begin{proof}
To obtain the required uniformity over $k$ we consider the following iteration started in $a^{(1)}\in [0,1],b^{(0)} \in [\min \phi,1], |\bar{m}|\leq \sqrt{a^{(1)}}$: For $\ell\geq 1$ let  
\[ a^{(\ell+1)} := \phi(a^{(\ell)}), \quad b^{(\ell)} := \phi(b^{(\ell-1)})\]
\[ c^{(\ell+1)} := \psi( c^{(\ell)}, a^{(\ell)},b^{(\ell)}), \quad c^{(1)} := \chi(b^{(0)})\bar{m}. \]
With this we have for $k+1>k'> 1$, that $q^{(k,k')} = c^{(k')}$ when choosing $a^{(1)}=q^{(1)}$ and $b^{(0)}= q^{(k-k')}$. Note that $k+1>k'$ is required to ensure that $k-k'\geq 2$, which is needed to have $b^{(0)}\geq \min \phi$. The case $k+1=k'$ will be resolved later. 
We now view $a,b,c$ as sequences of continuous functions of the starting values $a^{(1)},b^{(0)},\bar{m}$. Hence it suffices to show for $l>1$ and any starting values that 
\beq\label{claimq} c^{(\ell)}< \sqrt{a^{(\ell)}b^{(\ell)}},\eeq
as compactness of the space of starting values and continuity then implies the claim. Note that $c^{(\ell)}\leq \sqrt{a^{(\ell)}b^{(\ell)}}$ is an immeadiate consequence of Cauchy-Schwarz inequality:
\[ \psi(t,x,y) \leq \sqrt{\psi(x,x,x)\psi(y,y,y)}= \sqrt{ \phi(x) \phi(y)}.\]
For equality to hold $t=x=y$ is required on the set $\{(t,x,y): x,y \in [0,1], t\in [-\sqrt{xy},\sqrt{xy}]\}$, since $ \phi(x)>0$ due to $h\neq 0$. Hence $c^{(\ell+1)} = \sqrt{a^{(\ell+1)}b^{(\ell+1)}}$ can only happen if $c^{(\ell)}= a^{(\ell)}=b^{(\ell)}$. Propagating this downwards we see that $c^{(\ell)} = \sqrt{a^{(\ell)}b^{(\ell)}}$ for some $\ell\geq 1$ implies that $c^{(1)} = a^{(1)} = b^{(1)}$. Now 
\[ |c^{(1)}| = |\chi(b^{(0)})\bar{m}|\leq \sqrt{\phi(b^{(0)}) a^{(1)}}= \sqrt{b^{(1)} a^{(1)}},\]
where equality is only possible if $b^{(0)}=0$, as Jensen is only sharp for almost sure constants. This is a contradiction to $b^{(0)}\geq \min \phi>0$ due to $h\neq 0$ and therefore concludes the prove for $k+1>k\geq 1$. We now consider $q^{(k+1,k)}$ as a funtion of the starting values $q^{(1)}$ and $\bar{m}$. Following the same line of reasoning we have for $k>1$ that $q^{(k+1,k)} = \sqrt{q^{(k+1)}q^{(k)}}$ requires $q^{(k,k-1)} = q^{(k)}= q^{(k-1)}$. This however implies that $q^{(k)}=\phi(q^{(k-1)}) = \phi(q^{(k)})$ and therefore $q^{(k)}=q^{(k-1)}=q$.  Propagating this down again we obtain $q^{(1)}=q$, which contradicts that equality in $|q^{(2,1)}|= |\chi(q^{(1)})\bar{m}|\leq \sqrt{\phi(q^{(1)})q^{(1)}}$ only holds for $q^{(1)}=0$, again due to Jensen beeing only sharp for almost sure constants. 
Hence $ q^{(k+1,k)} < \sqrt{q^{(k+1)}q^{(k)}}$. Again for $k>1$ the claim follows due $\frac{q^{(k+1,k)}}{ \sqrt{q^{(k+1)}q^{(k)}}}$ being a continues function of the starting values, that is defined on a compact, hence a maximum is achieved, which strictly below one.  
\end{proof}

From here on, we only consider the choice $q^{(1)} = q^{(1)}_N$ and $\bar{m} = N^{-1} \sum_i m_i^{(1)}$ only and no longer condider the iteration as a function of starting values. Next we define for $k>k'\geq 1$  
\[ \kappa^{(k+1,k'+1)} := f_{k,k'} (\kappa^{(k,k')}), \mbox{ and }  \kappa^{(k,1)}:= \begin{cases} \frac{\chi(q^{(k-1)}) \bar{m}}{\sqrt{q^{(k)}q^{(1)}}} &,\mbox{ if } q^{(1)}>0 \\ 0 &,\mbox{ else }\end{cases}, \mbox{ where} \]
\[ f_{k,k'}(x):= \frac{\psi( x\sqrt{q^{(k)} q^{(k')}},q^{(k)},q^{(k')}) }{\sqrt{ \phi(q^{(k)}) \phi(q^{(k')})}}.\]
Note that if $q^{(1)}=0$ the value of $\kappa^{(k,1)}$  plays no role in the propagation, since it is multiplied by zero when computing $\kappa^{(k+1,2)}= \psi(0,0,q^{(k)})/\sqrt{\phi(0)\phi(q^{(k)})}$.  By Theorem \ref{q-prop} and remembering $\phi(t)\defi \psi(t,t,t)$ by definition, see \eqref{e:psihatdef} and \eqref{e:psi}, we observe that the correlation functional 
\[ \kappa^{(k,k')}_N  \defi  \frac{ q^{(k,k')}_N}{\sqrt{q^{(k)}_N q^{(k')}_N}}\]
satisfies for any $k>k'> 1$ 
\[ |\kappa^{(k,k')}_N - \kappa^{(k,k')}| \rightarrow 0 \mbox{ as } N\rightarrow \infty\]
in probability.  We remark that $\kappa^{(k,k')}$ may depend on $N$, but only through the starting values $q^{(1)}$ and $\bar{m}$. 
Note that $\psi(t,x,y)$ is strictly increasing in $t\in [-\sqrt{xy}, \sqrt{xy}]$ for any $x,y>0$ (see Lemma \ref{bounded} with $f=g=\tanh$) and therefore so is $f_{k,k'}$ for any $k,k'$. Further both functions are
Lipschitz by corollary \ref{Lipschitz}. Since $q^{(k)} \rightarrow q$  by Theorem~\ref{thm-contract-cond} we obtain, using that the Lipschitz constant is uniformly bounded, uniform convergence of $t\mapsto \psi( t , q^{(k)},q^{(k')})$ to $t\mapsto \psi(t,q,q)$ as well as uniform convergence of $f_{k,k'}$ to $t\mapsto \psi( qt,q,q)/q$. Using that $|\tanh''|\leq 2$ it is simple to check that $\frac{d}{dt} \psi(t,x,y)$ as well as $f'_{k,k'}$ are $4\beta^4$ Lipschitz (see corollary \ref{Lipschitz}). This again gives uniform convergence of $ t\mapsto \frac{d}{dt} \psi(t,q^{(k)},q^{(k')})$ to $t\mapsto \frac{d}{dt} \psi(t,q,q)$ and of $f'_{k,k'}$ to $t\mapsto \frac{d}{dt} \left(\psi(qt,q,q)/q\right)$. Corollary \ref{Lipschitz} gives uniform convergence and bounds for higher derivatives of $f'_{k,k'}$, since all derivatives of $\tanh$ are bounded. By assumption the AT condtion \eqref{e:AT} is not satisfied, which is equivalent to $\frac{d}{dt} \psi(t,q,q)|_{t=q}>1 $, where the derivative is taken from below or equivalently on the set $A:= \{(t,x,y): t\leq \sqrt{xy}, x,y\in [0,1]\}$. Since $\psi'$ is continues on $A$ there exists $\varepsilon>0$ and an in $A$ open neighborhood $U_\varepsilon$ of $(q,q,q)$ such that $\inf \psi'(U_\varepsilon) \geq  1+\varepsilon$. This implies that for sufficiently small $\varepsilon>0$ and sufficiently small $\delta\defi \delta_{\varepsilon}>0$ there exists  $K\in \N$, s.t. $f'_{k,k'}(t)>1+\varepsilon$ if $k,k'\geq K$ and $t> 1-\delta$. This impies for $k,k'\geq K$ and $t\in (1-\delta,1)$ that 
\begin{equation}\label{e:repulsion} f_{k,k'}(t) = f_{k,k'}(1)- \int_t^1 f'_{k,k'}(x) dx \leq 1-(1-t)(1+\epsilon)= t -(1-t)\epsilon .\end{equation}
We now recall that $\tilde{q}$ is the smaller of the two fixed points of the on $[0,1]$ convex and increasing function $t \mapsto \psi(t,q,q)$ and that this function is strictly above the identity on $[-1,\tilde{q})$ and below for $(\tilde{q},q)$ (see Lemma \ref{-t} and  Lemma~2.2 a) of~\cite{b1}). Hence by the established uniform convergence, for any $\delta'>0$,  there exist $\varepsilon'>0$ and $K'\in \N$, s.t. $f_{k,k'}(t)> t+\varepsilon'$ for $t\in [-1,\tilde{q}/q-\delta']$ and $f_{k,k'}(t) < t - \varepsilon'$ for $t \in [\tilde{q}/q+\delta', 1-\delta']$  for any $k,k' \geq K'$. We choose $\delta'=\delta$ for simplicity. With this, we observe for sufficiently large $k$, using additionally that $f_{k,k'}$ is continuous, increasing and convex on $\frac{\tilde{q}}{q}+[-\delta,\delta]$ due to uniform convergence, that $f_{k,k'}$ has exactly one fixed point on $\tilde{q}/q+ [-\delta,\delta]$ as long as $\delta$ is chosen smaller than  $(1-\frac{\tilde{q}}{q})/2$. We will refer to this fixed point by $\alpha_{k,k'}$.  By \eqref{e:notq} $\kappa_{K'+\ell,K'}<1- \delta_{K'}$ for any $\ell>0$. We now see that for large $k,k'$  that $n_{\varepsilon'}= \lceil1/\varepsilon' \rceil$ steps suffice to ensure that  $\kappa_{k+n_{\varepsilon'},k'+n_{\varepsilon'}}>0$. Any positive value $t\in [0, \tilde{q}/q-\delta]$ of $\kappa$ increases by at least $\varepsilon'$ each step and eventually enters $\tilde{q}/q+[-\delta,\delta]$ as it can not jump over $\alpha_{k,k'} \in \frac{\tilde{q}}{q}+[-\delta,\delta]$, since $f_{k,k'}(t)\leq f_{k,k'}( \alpha_{k,k'})=\alpha_{k,k'}$. On the other hand any value $t\in [\tilde{q}/q+\delta, 1-\delta]$ decreases and lands in $[\tilde{q}/q-\delta,\tilde{q}/q+\delta]$ as well due to the same reasons. Finally values on $[1-\delta, 1-\delta_{K}]$ get repulsed from $1$ due to $f_{k,k'}(t) \leq t-(1-t)\varepsilon \leq t- \delta_{K}\varepsilon$ by \eqref{e:repulsion} and land in $\tilde{q}/q+[-\delta,\delta]$ as well. Hence for $k>k'$ both large enough for any $\delta>0$ there exists a number of steps $n_{\delta}$  uniformy bounded over all choices of $q^{(1)},\bar{m}$ such that $\kappa_{k+n_{\delta},k'+n_{\delta}}\in \frac{\tilde{q}}{q}+[-\delta,\delta]$. Since $\tilde{q}>0$ is the smallest fixed point of the on $[0,q]$ convex function $\psi(t,q,q)$ with $\psi(0,q,q)>0$ we have $\frac{d}{dt} \psi(t,q,q)|_{t=\tilde{q}} < 1$. Since $f_{k,k'}$ is increasing and above (below) the identity before (after) $\alpha_{k,k'}$. This implies that $f_{k,k'}(t)$ is always in between $t$ and $\alpha_{k,k'}$. Since $\alpha_{k,k'}\in \tilde{q}/q + [-\delta,\delta]$ we have established that the iteration never leaves  $\tilde{q}/q + [-\delta,\delta]$ once entered. Considering a sequence of $\delta$ approaching zero from above yields convergence of $\kappa_{k,k'}$ to $\tilde{q}/q$ uniformly in the starting values and therefore the claim due to the convergence of $q^{(k)}$ to $q$. 
\end{proof}

\end{document}